\newtheorem{Thm}{Theorem}[section]
\newtheorem{Lem}[Thm]{Lemma}
\newtheorem{Prop}[Thm]{Proposition}
\newtheorem{ThmA}{Theorem}
\theoremstyle{remark}
\newtheorem{Rem}{Remark}
\theoremstyle{definition}
\theoremstyle{definition}
\newtheorem{Ex}[Thm]{Example}
\newtheorem{Recipe}[Thm]{Recipe}
\newcommand{\BS}[1]{\boldsymbol{#1}}
\newcommand{\MC}[1]{\mathcal{#1}}
\newcommand{\MF}[1]{\mathfrak{#1}}
\newcommand{\MRM}[1]{\mathrm{#1}}
\newcommand{\OPE}[1]{\operatorname{#1}} 
\newcommand{\PreserveBackslash}[1]{\let\temp=\\#1\let\\=\temp}
\let\PBS=\PreserveBackslash 
\newlength{\LENGTHH}
\newlength{\LENGTHTHETA}
\title[Local orbit type]{Local orbit types of the isotropy representations for
semisimple pseudo-Riemannian symmetric spaces}
\author{Kurando Baba}
\date{\today}
\keywords{pseudo-Riemannian symmetric space, s-representation, local orbit type, restricted root system, Satake diagram}
\subjclass[2000]{53C35, 57S20}
\begin{document}
\maketitle

\begin{abstract}
We list up all the possible local orbit types
of hyperbolic
or elliptic orbits for the isotropy representations of semisimple
pseudo-Riemannian symmetric spaces.
It is key to
give a recipe to determine the local orbit types of hyperbolic
principal orbits by using three kind of 
restricted root systems and Satake diagrams
associated with semisimple pseudo-Riemannian symmetric spaces.
\end{abstract}

\section*{Introduction}

Let $G/H$ be a semisimple pseudo-Riemannian symmetric space.
The isotropy representation of $G/H$ is called an \textit{s-representation}.
In this paper, we investigate
all the possible local orbit types
(i.e., the conjugate classes of isotropy subalgebras)
of hyperbolic or elliptic orbits
for the s-representation of $G/H$
by using three kind of restricted root system associated with $G/H$.
An orbit is said to be \textit{hyperbolic principal}
(resp.\,\textit{elliptic principal})
if it is a hyperbolic orbit (resp.\,an elliptic orbit)
whose local orbit type is the smallest one among
the hyperbolic orbits (resp.\, the elliptic orbits).
We also investigate the local orbit type
of a hyperbolic principal orbit and an elliptic principal orbit
by using the three kind of Satake diagrams associated with $G/H$.
The present work is based on the paper \cite{B}.
In 1992, Heintze and Olmos (\cite{MR1023346}) determined
the isotropy subalgebras of the orbits in the case
where $G/H$ is Riemannian.
Moreover,
in 2007,
Boumuki (\cite{MR2370009}) gave
the isotropy subalgebras of elliptic orbits
in the case where $G/H$ is a semisimple Lie group.

\medskip

We state the main result of this paper.
Let $G/H$ be a semisimple pseudo-Riemannian symmetric space.
Denote by $\MF{g}$ (resp.~$\MF{h}$)
the Lie algebra of $G$ (resp.~$H$).
Let $\sigma$ be an involution of $\MF{g}$ whose
fixed point set coincides with $\MF{h}$.
Set $\MF{q}:=\{X \in \MF{g}\mid\sigma(X)=-X\}$,
which is identified with the tangent space of $G/H$ at $eH$.
Here $e$ is the identity element of $G$.
Let
$\MF{a}$ be a vector-type maximal split abelian subspace of
$\MF{q}$.
Denote by $\Delta$ the restricted root system
of $G/H$ with respect to $\MF{a}$.
Let $\varPsi$ be a simple root system of $\Delta$,
and
$\Delta_{+}$ be the positive root system of $\Delta$
with respect to $\varPsi$.
Set,
for any $\Theta \subset \varPsi$,
$\MF{h}_{\Theta}:=
\MF{z}_{\MF{h}}+\sum_{\lambda \in \Delta_{\Theta}\cap\Delta_{+}}\MF{h}_{\lambda}$,
where
$\MF{z}_{\MF{h}}$ denotes the centralizer of $\MF{a}$ in $\MF{h}$,
$\MF{h}_{\lambda}$ denotes the root subspace of $\MF{h}$
for $\lambda$, and
$\Delta_{\Theta}:=\Delta\cap\sum_{\lambda \in \Theta}\BS{R}\lambda$.
Denote by $[\MF{h}_{\Theta}]$ the conjugate class of $\MF{h}_{\Theta}$,
and by $(m^{+}(\lambda),m^{-}(\lambda))$
the signature of $\lambda \in \Delta$.
Let $\MC{W}(\Delta)$ be the Weyl group of $\Delta$
and $\MC{W}(\Delta^{a})$ be the Weyl group of $\Delta^{a}$,
where $\Delta^{a} := \{\lambda \in \Delta\mid m^{+}(\lambda)>0\}$.
Then we have the following result.

\begin{ThmA}\label{thm.main}
Let $w_{1},w_{2},\ldots,w_{l}$ be a complete system
of representatives for $\MC{W}(\Delta)/\MC{W}(\Delta^{a})$.
The set of all local orbit types of
the hyperbolic orbits for the s-representation of $G/H$
coincides with
\begin{equation}\label{obj.local}
\bigcup^{l}_{i=1}\big\{[\MF{h}_{\Theta}]\mid\Theta \subset w_{i} \cdot \varPsi\big\},  
\end{equation}
where $w_{i} \cdot \varPsi$ denotes the set of all the functions $w_{i}\cdot \lambda (\lambda \in \varPsi)$ defined by $w_{i}\cdot \lambda(A)=\lambda(w^{-1}_{i}(A))$ for all $A \in \MF{a}$.
\end{ThmA}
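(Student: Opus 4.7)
The plan is to prove the theorem in three stages: reducing an arbitrary hyperbolic orbit to one meeting $\MF{a}$; computing the isotropy subalgebra at a point of $\MF{a}$ in terms of restricted roots; and finally translating the resulting orbit structure into the union appearing in \eqref{obj.local} via the two Weyl groups $\MC{W}(\Delta)$ and $\MC{W}(\Delta^{a})$.

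Since any two vector-type maximal split abelian subspaces of $\MF{q}$ are $H$-conjugate, every hyperbolic orbit intersects the chosen $\MF{a}$. For a representative $X \in \MF{a}$, intersecting the restricted root space decomposition of $\MF{g}$ with $\MF{h}$ and taking the kernel of $\OPE{ad}(X)$ yields
\begin{equation*}
\MF{h}_{X} = \MF{z}_{\MF{h}} + \sum_{\lambda \in \Delta_{+},\ \lambda(X)=0}\MF{h}_{\lambda},
\end{equation*}
so the local orbit type of $H\cdot X$ is determined entirely by the set of roots $\{\lambda \in \Delta \mid \lambda(X)=0\}$.

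Next I would invoke the Weyl chamber structure of $\MF{a}$. Every $X \in \MF{a}$ lies in the closure of a chamber of the form $w \cdot C$ for some $w \in \MC{W}(\Delta)$, where $C$ denotes the fundamental chamber determined by $\varPsi$. Working with the basis $w \cdot \varPsi$, the roots vanishing on $X$ are exactly $\Delta_{\Theta}$ for a unique $\Theta \subset w \cdot \varPsi$, so $\MF{h}_{X} = \MF{h}_{\Theta}$. Since the $H$-action on $\MF{a}$ realizes precisely the subgroup $\MC{W}(\Delta^{a})$ of $\MC{W}(\Delta)$, a full set of $H$-orbit representatives on $\MF{a}$ is obtained by restricting $X$ to the union $\bigcup_{i=1}^{l} w_{i} \cdot \overline{C}$; this gives the inclusion $\subset$. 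The reverse inclusion follows by, for each $\Theta \subset w_{i} \cdot \varPsi$, selecting $X$ in the relative interior of the corresponding face of $w_{i} \cdot \overline{C}$, which lies in $\MF{a}$ (hence is hyperbolic) and has $\MF{h}_{X} = \MF{h}_{\Theta}$.

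The main obstacle is the identification, used centrally above, of the Weyl group realized by $H$-conjugation on $\MF{a}$ --- that is, the image of $N_{H}(\MF{a})/Z_{H}(\MF{a})$ inside $\MC{W}(\Delta)$ --- with $\MC{W}(\Delta^{a})$. Showing that a reflection at $\lambda$ arises from $H$-conjugation precisely when $m^{+}(\lambda)>0$ requires a careful signature analysis of the root spaces via the involution $\sigma$ and the associated Cartan-type decomposition of $\MF{g}$, and is the technical heart of the proof.
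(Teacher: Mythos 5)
Your architecture coincides with the paper's: reduce every hyperbolic orbit to one meeting $\MF{a}$, compute $\MF{h}_{A}=\MF{h}_{\Theta}$ on the face of a closed chamber indexed by $\Theta$, and let the coset representatives $w_{i}$ enumerate the chambers up to the group actually realized by $H$-conjugation. That bookkeeping is fine and matches the paper's decomposition of $\overline{C(w_{i}\cdot\varPsi)}$ into the faces $C(w_{i}\cdot\varPsi,\Theta)$. The problem is that the step you yourself call ``the technical heart'' --- that $H$-conjugacy on $\MF{a}$ is governed exactly by $\MC{W}(\Delta^{a})$ --- is asserted and deferred rather than proved, and this is precisely the non-trivial content of the theorem; everything else is standard chamber--face combinatorics. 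Moreover, the route you gesture at (a signature analysis deciding for which $\lambda$ the reflection $s_{\lambda}$ is realized inside $N_{H}(\MF{a})/Z_{H}(\MF{a})$) would not by itself close the gap: even with the image of $N_{H}(\MF{a})/Z_{H}(\MF{a})$ in $\MC{W}(\Delta)$ in hand, you still do not know that two elements of $\MF{a}$ lying in the same $H$-orbit are conjugate by an element \emph{normalizing} $\MF{a}$; a priori an $h\in H$ with $\OPE{Ad}(h)A\in\MF{a}$ need not preserve $\MF{a}$, and its effect on $A$ need not be realized by any normalizer element.

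The paper fills both halves of this gap with two specific inputs. First, after conjugating $\MF{a}$ into $\MF{p}\cap\MF{q}$, it invokes Lemma 2 of Rossmann \cite{MR518716}: if $A\in\MF{a}$ and $\OPE{Ad}(h)A\in\MF{a}$ for $h\in H$, then $\OPE{Ad}(h)A=\OPE{Ad}(k)A$ for some $k\in N_{H\cap K}(\MF{a})$, so $H$-conjugacy inside $\MF{a}$ reduces to $N_{H\cap K}(\MF{a})$-conjugacy. Second, $N_{H\cap K}(\MF{a})/Z_{H\cap K}(\MF{a})$ is identified with $\MC{W}(\Delta^{a})$ not by a direct signature computation but by passing to the auxiliary \emph{Riemannian} symmetric pair $(\MF{h}^{a},\MF{k}\cap\MF{h})$ with $\MF{h}^{a}=\OPE{Ker}(\sigma\circ\theta-\OPE{id})=\MF{k}\cap\MF{h}+\MF{p}\cap\MF{q}$, whose restricted root system with respect to $\MF{a}$ is exactly $\Delta^{a}=\{\lambda\in\Delta\mid m^{+}(\lambda)>0\}$, so that the classical Riemannian identification of the normalizer quotient with the Weyl group applies. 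To complete your write-up you should either reproduce or cite these two facts; note that the signature condition $m^{+}(\lambda)>0$ enters only through the identification of $\Delta^{a}$ as the restricted root system of this auxiliary pair, not through a root-by-root analysis of which reflections lift.
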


\noindent
A major difficulty in indefinite case
arises from the fact that
$\MC{W}(\Delta)$ is not necessarily isomorphic to
$N_{H}(\MF{a})/Z_{H}(\MF{a})$,
where
$N_{H}(\MF{a})$ (resp.\,$Z_{H}(\MF{a})$)
denotes the normalizer (resp.\,the centralizer) of $\MF{a}$ in $H$.
Note that $w_{i}$'s 
($w_{i} \in \MC{W}(\Delta)/\MC{W}(\Delta^{a})$)
are not necessarily
preserve the signatures of the roots in $\varPsi$ invariantly,
so that, for $1 \leq i\neq j \leq l$,
$\{[\MF{h}_{\Theta}]\mid \Theta \subset w_{i}\cdot \varPsi\}$
and $\{[\MF{h}_{\Theta}]\mid \Theta \subset w_{j}\cdot \varPsi\}$
are not necessarily equal to each other
(cf.\,Example \ref{ex.isotro1}).
In positive definite case,
$\MC{W}(\Delta)$ is equal to $\OPE{Ad}(H)|_{\MF{a}}$
and therefore $l=1$ holds.
For classical-type semisimple pseudo-Riemannian symmetric spaces,
we give the lists of the indices of $\MC{W}(\Delta)/\MC{W}(\Delta^{a})$
(cf.\,Table \ref{table.index}).
From Theorem \ref{thm.main},
in order to determine the set of local orbit types,
it is sufficient to determine $\{[\MF{h}_{\Theta}]\mid\Theta \subset w_{i}\cdot\varPsi\}$ for each $i \in \{1,2,\ldots,l\}$.
In \cite{B},
we gave a recipe to determine
$\{[\MF{h}_{\Theta}]\mid\Theta \subset \Pi\}$ for a simple
root system $\Pi$ of $\Delta$.
It follows from our recipe
that for each $\Theta \subset \Pi$,
$\MF{h}_{\Theta}$ corresponds
to the subdiagram of the Dynkin diagram associated with $\Pi$,
and is determined by 
the hyperbolic principal isotropy subalgebra and
a semisimple subsymmetric pair of $(\MF{g}, \MF{h})$
associated with $\Delta_{\Theta}$ (see page 315 in \cite{B}).
By applying our recipe to $w_{i}\cdot\varPsi$ for each $i \in \{1,\ldots,l\}$,
we can determine (\ref{obj.local}).
Our recipe is analogous to determine the local orbit
types of the orbits for the isotropy action on
Riemannian symmetric spaces of compact type by Tamaru (\cite{MR1702475}).

\begin{Rem}
Similarly, the set of all local orbit types of the elliptic orbits
for the s-representation is also determined by
using the restricted root system with respect to
a troidal-type maximal split abelian subspace.
\end{Rem}

The isotropy subalgebra of a hyperbolic
principal orbit is isomorphic to $\MF{z}_{\MF{h}}$,
which is called a \textit{hyperbolic principal isotropy subalgebra}
(abbreviated to HPIS).
Suppose that $\theta$ is a Cartan involution of $\MF{g}$
commuting with $\sigma$
and $\MF{a}$ is a subspace of $\MF{p}$,
where $\MF{p}:=\OPE{Ker}(\theta+\OPE{id})$.
Set $\MF{k}:=\OPE{Ker}(\theta-\OPE{id})$.
Let $\MF{a}_{\MF{q}}$ (resp.\,$\MF{a}_{\MF{p}}$)
be a maximal abelian subspace of $\MF{q}$ (resp.\,$\MF{p}$)
containing $\MF{a}$.
By using the Satake diagrams associated with
$G/H$ with respect to $\MF{a}$, $\MF{a}_{\MF{q}}$ and $\MF{a}_{\MF{p}}$,
we investigate the (Lie algebra) structure of $\MF{z}_{\MF{h}}$.
Moreover, we give a recipe to determine $\MF{z}_{\MF{h}}$
(cf.\,Recipe \ref{recipe.hprin} in Section \ref{sec.hprin}).
We obtained the these Satake diagrams for classical-type
semisimple pseudo-Riemannian symmetric spaces in \cite{B2}.
Then we have the following result in terms of Table 1 in \cite{B2}.

\begin{ThmA}
The hyperbolic principal
isotropy subalgebras of the s-represen-tations
associated with all classical-type semisimple pseudo-Riemannian symmetric spaces
are as in Table \ref{table.hprin}.
\end{ThmA}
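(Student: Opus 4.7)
The plan is to apply Recipe \ref{recipe.hprin} mechanically to every classical-type semisimple pseudo-Riemannian symmetric pair $(\MF{g},\MF{h})$. By definition the HPIS is $\MF{z}_{\MF{h}}$, the centralizer of $\MF{a}$ in $\MF{h}$, and because $\MF{a}$ sits inside the chain $\MF{a}\subset\MF{a}_{\MF{q}}\subset\MF{a}_{\MF{p}}$ of abelian subspaces chosen compatibly with $\sigma$ and $\theta$, its centralizer decomposes as a toroidal part plus a semisimple part coming from those $\MF{g}$-roots with respect to $\MF{a}_{\MF{p}}$ which restrict to zero on $\MF{a}$. This vanishing information is recorded by the painted vertices and the arrows in the three Satake diagrams associated with $\MF{a}$, $\MF{a}_{\MF{q}}$, $\MF{a}_{\MF{p}}$, which are already compiled in Table 1 of \cite{B2}.

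I would first restate Recipe \ref{recipe.hprin} as a single read-off rule: the semisimple part of $\MF{z}_{\MF{h}}$ is the Lie algebra whose Dynkin diagram is the black subdiagram of the $\MF{a}_{\MF{p}}$-Satake diagram consisting of those simple roots that restrict to zero on $\MF{a}$, while the toroidal part has dimension $\dim\MF{a}_{\MF{p}}-\dim\MF{a}$ minus the rank of that semisimple part, adjusted by the arrow pairing. With this rule in hand, I would walk through the classical-type pairs family by family, reading off the three Satake diagrams from \cite{B2} for each entry and computing the corresponding product of classical simple factors together with its toroidal center. Each individual case reduces to bookkeeping with the standard series $\MF{sl}$, $\MF{so}$, $\MF{sp}$, $\MF{u}$, where the real form of each factor is determined by whether the associated black vertices lie on the $\MF{h}$-side or on the $\MF{q}$-side of the decomposition induced by $\sigma$, and the signature is read from the dotted-arrow structure.

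The main obstacle is the combinatorial bookkeeping in multi-parameter families such as the pseudo-unitary and pseudo-orthogonal Grassmannians, where the shape of the relevant Satake diagrams, and hence the isomorphism type of $\MF{z}_{\MF{h}}$, depends on inequalities and equalities among several integer parameters; each branch must be treated as a separate subcase and then collected into a single row of Table \ref{table.hprin}. A secondary subtlety is keeping the arrow-pairings compatible across the three Satake diagrams so that the nested inclusions $\MF{a}\subset\MF{a}_{\MF{q}}\subset\MF{a}_{\MF{p}}$ are realised consistently; here I would lean on the explicit matrix realisations given in \cite{B2} rather than redoing them from scratch. Once every classical family has been processed in this way, Table \ref{table.hprin} is assembled entry by entry and Theorem B follows immediately from Recipe \ref{recipe.hprin}.
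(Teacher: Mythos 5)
Your overall strategy is the paper's: Theorem B is proved there exactly by running Recipe \ref{recipe.hprin} over the classification, using the three compatible Satake diagrams from \cite{B2}. The gap is in your attempt to compress the Recipe into a ``single read-off rule'', which is not equivalent to it and would produce wrong table entries. First, the black subdiagram $\varPhi_{0}$ of $S(\MF{g},\MF{h},\MF{a})$ only determines the Dynkin diagram of $(\MF{z}^{s}_{\MF{g}})^{\BS{C}}$; to get the real form and, more importantly, its intersection with $\MF{h}$, you need the case analysis (Cases 1--6 of Section \ref{sec.hprin}). In Cases 5 and 6 two irreducible black components merge under $p_{\theta}$ or $p_{\sigma}$ into a single complex factor or a diagonal $(\MF{m}+\MF{m},\MF{m})$, and in Cases 3 and 6 the $\MF{h}$-part is a \emph{proper} symmetric subalgebra of a compact factor, read off from $S(\MF{g},\MF{h},\MF{a}_{\MF{q}})$. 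Example \ref{exam.1} shows both failures concretely: for $(\MF{su}(2p,2(n-p)),\MF{sp}(p,n-p))$ your rule would yield something like $\MF{su}(2)^{2p}$ plus a compact $A_{2(n-2p)-1}$-factor, whereas the correct HPIS is $\MF{sl}(2,\BS{C})^{p}+\MF{sp}(n-2p)$, obtained by pairing the $2p$ black singletons under $p_{\theta}$ and taking the $\sigma$-fixed part $\MF{sp}(n-2p)$ of $\MF{su}(2(n-2p))$.

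Second, your formula for the abelian part, $\dim\MF{a}_{\MF{p}}-\dim\MF{a}$ minus the rank of the semisimple part, is wrong: the center of $\MF{z}_{\MF{h}}$ lives in $\tilde{\MF{a}}^{c}\cap(\MF{k}\cap\MF{h})+\tilde{\MF{a}}^{c}\cap(\MF{p}\cap\MF{h})$, and the first summand (contributing the $\MF{so}(2)$-factors, as opposed to the $\BS{R}$-factors) is invisible to $\MF{a}_{\MF{p}}$. For $(\MF{sl}(2m,\BS{C}),\MF{sl}(2m,\BS{R}))$ in Example \ref{exam.2} your count gives $(2m-1)-m=m-1$, i.e.\ only the $\BS{R}^{m-1}$, and misses the $\MF{so}(2)^{m}$ in the entry of Table \ref{table.hprin}. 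The correct bookkeeping is Step 4 of the Recipe, using $\OPE{rank}\MF{g}^{\BS{C}}$, $\OPE{rank}(\MF{g},\MF{h})$, $\OPE{rank}(\MF{g},\MF{k})$ and the split rank, together with the dimensions of $\tilde{\MF{a}}^{s}$ in each of the four pieces of the $(\sigma,\theta)$-decomposition. If you restore the full case analysis and Step 4 in place of your shortcut, the rest of your plan (family-by-family verification against the diagrams of \cite{B2}, with the multi-parameter Grassmannian branches treated separately) is exactly what the paper does.
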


\begin{table}[htbp]
\footnotesize
\caption{The hyperbolic principal isotropy subalgebras}\label{table.hprin}
\centering
\begin{tabular}{|l|c|c|}
\hline
$(\MF{g},\MF{h})$& HPIS & Remarks\\
\hline
\hline
$(\MF{sl}(n,\BS{C}),\MF{sl}(n,\BS{R}))$
&$\BS{R}^{[(n-1)/2]}+\MF{so}(2)^{[n/2]}$&\\\hline
$(\MF{sl}(n,\BS{R})^{2},\MF{sl}(n,\BS{R}))$
&$\BS{R}^{n-1}$&\\\hline
$(\MF{sl}(n,\BS{C}),\MF{so}(n,\BS{C}))$
&$\{0\}$&\\\hline
$(\MF{sl}(2n,\BS{C}),\MF{su}^{*}(2n))$
&$\BS{R}^{n-1}+\MF{so}(2)^{n}$&\\\hline
$(\MF{su}^{*}(2n)^{2},\MF{su}^{*}(2n))$
&$\BS{R}^{n-1}+\MF{sp}(1)^{n}$&\\\hline
$(\MF{sl}(2n,\BS{C}),\MF{sp}(n,\BS{C}))$
&$\MF{sp}(1,\BS{C})^{n}$&\\\hline
$(\MF{sl}(n,\BS{C}),\MF{su}(p,n-p))$
&$\MF{so}(2)^{n-1}$&\\\hline
\multirow{2}{*}{$(\MF{su}(p,n-p)^{2},\MF{su}(p,n-p))$}&
$\BS{R}^{p}+\MF{so}(2)^{p}+\MF{su}(n-2p)$&$n>2p$\\\cline{2-3}
&$\BS{R}^{p}+\MF{so}(2)^{p-1}$&$n=2p$\\\hline
\multirow{2}{*}{$(\MF{sl}(n,\BS{C}),\MF{sl}(p,\BS{C})+\MF{sl}(n-p,\BS{C})+\BS{C})$}&
$\BS{C}^{p}+\MF{sl}(n-2p,\BS{C})$&$n>2p$\\\cline{2-3}
&$\BS{C}^{p-1}$&$n=2p$\\\hline
$(\MF{so}(2n,\BS{C}),\MF{so}^{*}(2n))$
&$\MF{so}(2)^{n}$&\\\hline
\multirow{2}{*}{$(\MF{so}^{*}(2n)^{2},\MF{so}^{*}(2n))$}&
$\BS{R}^{m}+\MF{su}(2)^{m}+\MF{so}(2)$&$n=2m+1$\\\cline{2-3}
&$\BS{R}^{m}+\MF{su}(2)^{m}$&$n=2m$\\\hline
\multirow{2}{*}{$(\MF{so}(2n,\BS{C}),\MF{sl}(n,\BS{C})+\BS{C})$}&
$\MF{sl}(2,\BS{C})^{m}+\BS{C}$&$n=2m+1$\\\cline{2-3}
&$\MF{sl}(2,\BS{C})^{m}$&$n=2m$\\\hline
\end{tabular}
\end{table}

\begin{table}[htbp]
\footnotesize
\contcaption{(continued)}
\centering
\begin{tabular}{|l|c|c|}
\hline
$(\MF{g},\MF{h})$& HPIS & Remarks\\
\hline
\hline
\multirow{6}{*}{$(\MF{so}(n,\BS{C}),\MF{so}(p,n-p))$}&
\multirow{2}{*}{$\MF{so}(2)^{m}$}&$n=2m+1$\\
&&$p=2q$\\\cline{2-3}
&\multirow{2}{*}{$\MF{so}(2)^{m}+\BS{R}$}&$n=2m$\\
&&$p=2q+1$\\\cline{2-3}
&\multirow{2}{*}{$\MF{so}(2)^{m}$}&$n=2m$\\
&&$p=2q$\\\hline
$(\MF{so}(p,n-p)^{2},\MF{so}(p,n-p))$&
$\BS{R}^{p}+\MF{so}(n-2p)$&\\\hline
$(\MF{so}(n,\BS{C}),\MF{so}(p,\BS{C})+\MF{so}(n-p,\BS{C}))$&
$\MF{so}(n-2p,\BS{C})$&\\\hline
$(\MF{sp}(n,\BS{C}),\MF{sp}(n,\BS{R}))$
&$\MF{so}(2)^{n}$&\\\hline
$(\MF{sp}(n,\BS{R})^{2},\MF{sp}(n,\BS{R}))$
&$\BS{R}^{n}$&\\\hline
$(\MF{sp}(n,\BS{C}),\MF{sl}(n,\BS{C})+\BS{C})$
&$\{0\}$&\\\hline
$(\MF{sp}(n,\BS{C}),\MF{sp}(p,n-p))$
&$\MF{so}(2)^{n}$&\\\hline
$(\MF{sp}(p,n-p)^{2},\MF{sp}(p,n-p))$&
$\BS{R}^{p}+\MF{sp}(1)^{p}+\MF{sp}(n-2p)$&\\\hline
$(\MF{sp}(n,\BS{C}),\MF{sp}(p,\BS{C})+\MF{sp}(n-p,\BS{C}))$&
$\MF{sp}(1,\BS{C})^{p}+\MF{sp}(n-2p,\BS{C})$&\\\hline
$(\MF{sl}(n,\BS{R}),\MF{so}(p,n-p))$
&$\{0\}$&\\\hline
$(\MF{su}(p,n-p),\MF{so}(p,n-p))$&
$\MF{so}(n-2p)$&\\\hline
\multirow{2}{*}{$(\MF{sl}(n,\BS{R}),\MF{sl}(p,\BS{R})+\MF{sl}(n-p,\BS{R})+\BS{R})$}&
$\BS{R}^{p}+\MF{sl}(n-2p,\BS{R})$&$n>2p$\\\cline{2-3}
&$\BS{R}^{p-1}$&$n=2p$\\\hline
$(\MF{su}^{*}(2n),\MF{sp}(p,n-p))$
&$\MF{sp}(1)^{n}$&\\\hline
$(\MF{su}(2p,2(n-p)),\MF{sp}(p,n-p))$&
$\MF{sl}(2,\BS{C})^{p}+\MF{sp}(n-2p)$&\\\hline
\multirow{2}{*}{$(\MF{su}^{*}(2n),\MF{su}^{*}(2p)+\MF{su}^{*}(2(n-p))+\BS{R})$}&
$\BS{R}^{p}+\MF{sp}(1)^{p}+\MF{su}^{*}(2(n-2p))$&$n>2p$\\\cline{2-3}
&$\BS{R}^{p-1}+\MF{sp}(1)^{p}$&$n=2p$\\\hline
$(\MF{sl}(2n,\BS{R}),\MF{sp}(n,\BS{R}))$
&$\MF{sp}(1,\BS{R})^{n}$&\\\hline
$(\MF{su}^{*}(2n),\MF{so}^{*}(2n))$
&$\MF{u}(1)^{n}$&\\\hline
$(\MF{su}(n,n),\MF{so}^{*}(2n))$
&$\{0\}$&\\\hline
$(\MF{sl}(2n,\BS{R}),\MF{sl}(n,\BS{C})+\MF{so}(2))$
&$\BS{R}^{n-1}$&\\\hline
\multirow{2}{*}{$(\MF{su}^{*}(2n),\MF{sl}(n,\BS{C})+\MF{so}(2))$}&
$\BS{R}^{m}+\MF{su}(2)^{m}+\MF{so}(2)$&$n=2m+1$\\\cline{2-3}
&$\BS{R}^{m-1}+\MF{su}(2)^{m}$&$n=2m$\\\hline
\multirow{2}{*}{$(\MF{su}(n,n),\MF{sp}(n,\BS{R}))$}&
$\MF{sp}(1,\BS{C})^{m}+\MF{sp}(1,\BS{R})$&$n=2m+1$\\\cline{2-3}
&$\MF{sp}(1,\BS{C})^{m}$&$n=2m$\\\hline
$(\MF{su}(n,n),\MF{sl}(n,\BS{C})+\BS{R})$
&$\MF{so}(2)^{n-1}$&\\\hline
\multirow{6}{*}{$(\MF{so}^{*}(2n),\MF{su}(p,n-p)+\MF{so}(2))$}&
\multirow{2}{*}{$\MF{su}(2)^{m}+\MF{so}(2)$}&$n=2m+1$\\
&&$p=2q$\\\cline{2-3}
&\multirow{2}{*}{$\MF{su}(2)^{m}+\MF{so}(2)$}&$n=2(m+1)$\\
&&$p=2q+1$\\\cline{2-3}
&\multirow{2}{*}{$\MF{su}(2)^{m}$}&$n=2m$\\
&&$p=2q$\\\hline
$(\MF{so}(2p,2(n-p)),\MF{su}(p,n-p)+\MF{so}(2))$&
$\MF{su}(1,1)^{p}+\MF{u}(n-2p)$&\\\hline
\end{tabular}
\end{table}

\begin{table}[htbp]
\footnotesize
\contcaption{(continued)}
\centering
\begin{tabular}{|l|c|c|}
\hline
$(\MF{g},\MF{h})$& HPIS & Remarks\\
\hline
\hline
$(\MF{so}^{*}(2n),\MF{so}^{*}(2p)+\MF{so}^{*}(2(n-p)))$&
$\MF{so}(2)^{p}+\MF{so}^{*}(2(n-2p))$&\\\hline
$(\MF{so}(n,n),\MF{so}(n,\BS{C}))$
&$\{0\}$&\\\hline
$(\MF{so}^{*}(2n),\MF{so}(n,\BS{C}))$
&$\MF{so}(2)^{[n/2]}$&\\\hline
\multirow{2}{*}{$(\MF{so}(n,n),\MF{sl}(n,\BS{R})+\BS{R})$}&
$\BS{R}+\MF{sl}(2,\BS{R})^{m}$&$n=2m+1$\\\cline{2-3}
&$\MF{sl}(2,\BS{R})^{m}$&$n=2m$\\\hline
$(\MF{so}^{*}(4n),\MF{su}^{*}(2n)+\BS{R})$
&$\MF{sp}(1)^{n}$&\\\hline
$(\MF{sp}(n,\BS{R}),\MF{su}(p,n-p)+\MF{so}(2))$
&$\{0\}$&\\\hline
$(\MF{sp}(p,n-p),\MF{su}(p,n-p)+\MF{so}(2))$&
$\MF{u}(1)^{p}+\MF{u}(n-2p)$&\\\hline
$(\MF{sp}(n,\BS{R}),\MF{sp}(p,\BS{R})+\MF{sp}(n-p,\BS{R}))$&
$\MF{sp}(1,\BS{R})^{p}+\MF{sp}(n-2p,\BS{R})$&\\\hline
$(\MF{sp}(n,\BS{R}),\MF{sl}(n,\BS{R})+\BS{R})$
&$\{0\}$&\\\hline
$(\MF{sp}(n,n),\MF{sp}(n,\BS{C}))$
&$\MF{sp}(1)^{n}$&\\\hline
$(\MF{sp}(2n,\BS{R}),\MF{sp}(n,\BS{C}))$
&$\MF{sp}(1,\BS{R})^{n}$&\\\hline
$(\MF{sp}(n,n),\MF{su}^{*}(2n)+\BS{R})$
&$\MF{u}(1)^{n}$&\\\hline
\end{tabular}
\end{table}

\begin{table}[htbp]
\footnotesize
\centering
\begin{tabular}{|>{\PBS\centering}p{70mm}|>{\PBS\centering}p{30mm}|}
\multicolumn{2}{l}{
$(\MF{g},\MF{h})=(\MF{su}(n,m),\MF{su}(i,j)+\MF{su}(n-i,m-j)+\MF{so}(2))$}\\
\hline
HPIS & Remarks\\
\hline
\hline
$\MF{so}(2)^{n-1}$&$i+j=n=m$\\\hline
$\MF{so}(2)^{n}+\MF{su}(m-n)$&$n<i+j=m$\\\hline
$\MF{so}(2)^{m+n-(i+j)}+\MF{su}(i+j-n,i+j-m)$&$n\leq m<i+j$\\\hline
$\MF{so}(2)^{n}+\MF{su}(m-n)$&$n=i+j<m$\\\hline
$\MF{so}(2)^{n+1}+\MF{su}(i+j-n)+\MF{su}(m-(i+j))$&$n<i+j<m$\\\hline
$\MF{so}(2)^{i+j}+\MF{su}(n-(i+j),m-(i+j))$&$i+j<n\leq m$\\\hline
\end{tabular}
\end{table}

\begin{table}[htbp]
\footnotesize
\centering
\begin{tabular}{|>{\PBS\centering}p{70mm}|>{\PBS\centering}p{30mm}|}
\multicolumn{2}{l}{
$(\MF{g},\MF{h})=(\MF{so}(n,m),\MF{so}(i,j)+\MF{so}(n-i,m-j))$}\\
\hline
HPIS & Remarks\\
\hline
\hline
$\{0\}$&$i+j=n=m$\\\hline
$\MF{so}(m-n)$&$n<i+j=m$\\\hline
$\MF{so}(i+j-n,i+j-m)$&$n\leq m<i+j$\\\hline
$\MF{so}(m-n)$&$n=i+j<m$\\\hline
$\MF{so}(i+j-n)+\MF{so}(m-(i+j))$&$n<i+j<m$\\\hline
$\MF{so}(n-(i+j),m-(i+j))$&$i+j<n\leq m$\\\hline
\end{tabular}
\end{table}

\begin{table}[htbp]
\footnotesize
\centering
\begin{tabular}{|>{\PBS\centering}p{70mm}|>{\PBS\centering}p{30mm}|}
\multicolumn{2}{l}{
$(\MF{g},\MF{h})=(\MF{sp}(n,m),\MF{sp}(i,j)+\MF{sp}(n-i,m-j))$}\\
\hline
HPIS & Remarks\\
\hline
\hline
$\MF{sp}(1)^{n}$&$i+j=n=m$\\\hline
$\MF{sp}(1)^{n}+\MF{sp}(m-n)$&$n<i+j=m$\\\hline
$\MF{sp}(1)^{m+n-(i+j)}+\MF{sp}(i+j-n,i+j-m)$&$n\leq m<i+j$\\\hline
$\MF{sp}(1)^{n}+\MF{sp}(m-n)$&$n=i+j<m$\\\hline
$\MF{sp}(1)^{n}+\MF{sp}(i+j-n)+\MF{sp}(m-(i+j))$&$n<i+j<m$\\\hline
$\MF{sp}(1)^{i+j}+\MF{sp}(n-(i+j),m-(i+j))$&$i+j<n\leq m$\\\hline
\end{tabular}
\end{table}

\clearpage

\noindent
It is a difficult problem to determine the structures
of the HPISs, since these are not necessarily compact.
By using the Satake diagram associated with $G/H$ with respect to $\MF{a}$,
we determine the structure of $\MF{z}_{\MF{g}}^{\BS{C}}$,
where $\MF{z}_{\MF{g}}$ denotes the centralizer
of $\MF{a}$ in $\MF{g}$.
Moreover, we investigate the decomposition
$\MF{z}_{\MF{g}}=\MF{z}_{\MF{g}}\cap\MF{h}+\MF{z}_{\MF{g}}\cap\MF{q}$
and $\MF{z}_{\MF{g}}=\MF{z}_{\MF{g}}\cap\MF{k}+\MF{z}_{\MF{g}}\cap\MF{p}$
by using the Satake diagrams associated with $G/H$ with respect to
$\MF{a}$, $\MF{a}_{\MF{q}}$ and $\MF{a}_{\MF{p}}$.
In positive definite case,
the isotropy subalgebras are compact,
so that,
in order to investigate the structures of the isotropy subalgebras,
it is sufficient to investigate the dimension
of their centers and the Dynkin diagrams of their semisimple parts
by using the Satake diagram with respect to $\MF{a}$.

\medskip

The organization of this paper is as follows.
In Section \ref{Sec.pre}, we give preliminaries
for the restricted root systems with respect to maximal split abelian subspaces
for semisimple pseudo-Riemannian symmetric spaces.
Moreover,
we recall the notion of the Satake diagrams.
In Section \ref{Sec.proof},
we prove Theorem \ref{thm.main}.
In Section \ref{sec.weyl},
we shall give a complete representatives for
$\MC{W}(\Delta)/\MC{W}(\Delta^{a})$ in the case
where $G/H$ is of classical-type
except for $(\Delta,\Delta^{a}) = (A_{n-1},A_{p-1}\times A_{n-p-1})$.
In the case of $(\Delta,\Delta^{a})=(A_{n-1},A_{p-1}\times A_{n-p-1})$,
we give a recursive formula for $\MC{W}(\Delta)/\MC{W}(\Delta^{a})$
(Proposition \ref{lem.fmula1}).
By using the formula (\ref{fmula1}),
we give a complete representative for $\MC{W}(\Delta)/\MC{W}(\Delta^{a})$
in the case of $(\Delta,\Delta^{a})=(A_{4},A_{1}\times A_{2})$
(Example \ref{ex.weyl1}).
In Section \ref{sec.hprin},
we shall give a recipe to determine the (Lie algebra) structures
of HPISs (Recipe \ref{recipe.hprin}).
Moreover,
we give examples for $(\MF{g},\MF{h})=(\MF{su}(2p,2(n-p)),\MF{sp}(p,n-p)),\,(\MF{sl}(n,\BS{C}),\MF{sl}(n,\BS{R}))$ (Example \ref{exam.1} and \ref{exam.2}).
In Section \ref{Sec.det},
we investigate the isotropy subalgebras of
the hyperbolic orbits
for the s-representation of
a semisimple pseudo-Riemannian symmetric space $G/H$.
We give all the possible
local orbit types of hyperbolic orbits
for the s-representation associated with
$(\MF{sl}(4, \BS{R}),\MF{so}(2,2))$
(Example \ref{ex.isotro1}).
We also give all the possible
local orbit types of hyperbolic orbits
for the s-representations
associated with all classical semisimple pseudo-Riemannian symmetric
spaces in the case where
$\Delta=\Delta^{a}$, $(\Delta, \Delta^{a})=((BC)_{r},B_{r})$ or
$(\Delta,\Delta^{a})=(C_{r}, D_{r})$ (Example \ref{ex.local1} and \ref{ex.local2}).
In Section \ref{Sec.ellip},
we discuss the relation between
the local orbit types of the elliptic orbits
and those of the hyperbolic orbits.

\medskip

\paragraph{\textbf{Research plan in the future.}}
We will explicitly give a standard complete system of representatives for $\MC{W}(\Delta)/\MC{W}(\Delta^{a})$
for exceptional-type semisimple pseudo-Riemannian symmetric spaces.
Moreover, we will give HPISs for these spaces.
We will develop the submanifold geometry of orbits for
s-representations by using their isotropy subalgebras.

\section{Preliminaries}\label{Sec.pre}

Let $G$ be a connected semisimple noncompact Lie group,
and $\sigma$ be an involution of $G$.
Let $H$ be a closed subgroup of $G$ with
$(G_{\sigma})_{0} \subset H \subset G_{\sigma}$,
where $G_{\sigma}$ denotes the fixed point group
of $\sigma$ and $(G_{\sigma})_{0}$ denotes
the identity component of $G_{\sigma}$.
The pair $(G, H)$ is called a \textit{semisimple symmetric pair}.
Then the coset space $G/H$ equipped with
the metric induced from the Killing form of
the Lie algebra $\MF{g}$ of $G$
is a semisimple pseudo-Riemannian symmetric space.
The holonomy representation of $G/H$
is equivalent to the s-representation.
Denote by $\OPE{Ad_{\mathit{G}}}$ (resp.~$\OPE{ad_{\MF{g}}}$)
the adjoint representation of $G$ (resp.~$\MF{g}$).
The involution $\sigma$ of $G$ induces an involution of $\MF{g}$,
which is also denoted by the same symbol $\sigma$.
Then the Lie algebra $\MF{h}$ of $H$ coincides with
$\{X \in \MF{g}\mid \sigma(X)=X\}$.
The pair $(\MF{g},\MF{h})$ is called a \textit{semisimple symmetric pair}.
Set $\MF{q} := \{X \in \MF{g}\mid\sigma(X) = -X\}$.
The s-representation is equivalent to
the representation $\OPE{Ad}$ of $H$ on $\MF{q}$
defined by $\OPE{Ad}(h) = \OPE{Ad}_{G}(h)|_{\MF{q}}$
for all $h \in H$.
An element $X \in \MF{q}$ is said to be \textit{semisimple}
if the endomorphism $\OPE{ad}_{\MF{g}}(X)^{\BS{C}}$ is
diagonalizable,
where $\OPE{ad}_{\MF{g}}(X)^{\BS{C}}$ is the complexification
of $\OPE{ad}_{\MF{g}}(X)$.
A semisimple element $X \in \MF{q}$ is said to be \textit{hyperbolic}
(resp.\,\textit{elliptic}) if any eigenvalue of $\OPE{ad}_{\MF{g}}(X)^{\BS{C}}$
is real (resp.~pure imaginary).
An orbit through a hyperbolic (resp.~an elliptic) element
is called a \textit{hyperbolic orbit} (resp.\,an \textit{elliptic orbit}).
Denote by $H_{X}$ the isotropy subgroup
of $H$ at $X \in \MF{q}$, by $\MF{h}_{X}$ the Lie algebra of $H_{X}$.
It is clear that
$H_{\OPE{Ad}(h)X}=hH_{X}h^{-1}$
and
$\MF{h}_{\OPE{Ad}(h)X}=\OPE{Ad}_{G}(h)\MF{h}_{X}$hold for all $h \in H$.
The conjugate classes $\{hH_{X}h^{-1}\mid h\in H\}$
and $\{\OPE{Ad}_{G}(h)\MF{h}_{X}\mid h\in H\}$ are called
an \textit{orbit type} and a \textit{local orbit type},
respectively.

We recall the notion of the restricted root systems with respect to
maximal split abelian subspaces for semisimple pseudo-Riemannian symmetric spaces (cf.\,\cite{MR518716}, \cite{MR810638}).
Let $\MF{a}$ be a maximal split abelian subspace of $\MF{q}$
(i.e., a maximal abelian subspace of $\MF{q}$ which consists of only
hyperbolic elements or only elliptic elements).
We say that $\MF{a}$ is \textit{vector-type} (resp.\,\textit{troidal-type})
if all elements of $\MF{a}$ are hyperbolic (resp.~elliptic).
Set, for any $\lambda \in \MF{a}^{*}$, 
\begin{align*}
\MF{g}_{\lambda} &:= \{X \in \MF{g}\mid\OPE{ad}(A)X=(\sqrt{-1})^{\epsilon}
\lambda(A)X,\forall A \in \MF{a}\},\\
\MF{h}_{\lambda} &:= \{X \in \MF{h}\mid\OPE{ad}(A)^{2}X =
(-1)^{\epsilon}\lambda(A)^{2}X, \forall A \in \MF{a}\},\\
\MF{q}_{\lambda} &:= \{X \in \MF{q}\mid\OPE{ad}(A)^{2}X =
(-1)^{\epsilon}\lambda(A)^{2}X, \forall A \in \MF{a}\},
\end{align*}
where $\epsilon = 0$ (resp.~$1$) when $\MF{a}$
is vector-type (resp.~troidal-type).
Denote by $\Delta := \{\lambda \in \MF{a}^{*}\setminus\{0\}\mid\MF{g}_{\lambda}
\neq \{0\}\}=
\{\lambda \in
\MF{a}^{*}\setminus\{0\}\mid\MF{q}_{\lambda}\neq \{0\}\}$,
which is called the \textit{restricted root system} of $G/H$
(or $(\MF{g},\MF{h})$) with respect to $\MF{a}$.
The dimension of $\MF{a}$ is called the split rank
of $G/H$ (or $(\MF{g},\MF{h})$),
which is denoted by $\OPE{s\mathchar`-rank}(G/H)$
(or $\OPE{s\mathchar`-rank}(\MF{g},\MF{h})$).
Note that, for each $\lambda \in \Delta$,
the restriction of the Killing form of $\MF{g}$ to
$\MF{q}_{\lambda} \times \MF{q}_{\lambda}$
is a nondegenerate inner product of $\MF{q}_{\lambda}$,
whose index denotes $m^{-}(\lambda)$.
Set, for each $\lambda \in \Delta$,
$m(\lambda):=\dim \MF{q}_{\lambda}$
and $m^{+}(\lambda) := m(\lambda) - m^{-}(\lambda)$.
We call $m(\lambda)$ and $(m^{+}(\lambda),m^{-}(\lambda))$
the \textit{multiplicity} of $\lambda$ and the \textit{signature} of $\lambda$,
respectively.
A symmetric pair $(\MF{g}, \MF{h})$
is called \textit{basic} if $m^{+}(\lambda)\geq m^{-}(\lambda)$
for any $\lambda \in \Delta$ such that $\frac{1}{2}\lambda \not\in \Delta$.
Let $\varepsilon$ be a signature of $\Delta$, i.e.,
$\varepsilon$ is a mapping from $\Delta$ to $\{\pm 1\}$
satisfying the two conditions: (i)
$\varepsilon(\lambda+\mu) = \varepsilon(\lambda)\varepsilon(\mu)\,
(\lambda,\,\mu,\,\lambda+\mu \in \Delta)$, and (ii)
$\varepsilon(-\lambda) = \varepsilon(\lambda)
\,(\lambda \in \Delta)$.
Denote by $\sigma_{\varepsilon}$
the involution of $\MF{g}$ defined by
\[
\sigma_{\varepsilon}(X)=
\begin{cases}
\sigma(X) & (X \in \MF{z}_\MF{g}),\\
\varepsilon(\lambda)\sigma(X)
& (X \in \MF{g}_{\lambda},\lambda \in \Delta).
\end{cases}
\]

\noindent
Then $(\MF{g}, \MF{h}_{\varepsilon})$
is a semisimple symmetric pair,
where $\MF{h}_{\varepsilon}=\OPE{Ker}(\sigma_{\varepsilon}-\OPE{id})$.
Denote by $\MF{z}_{\MF{g}}$, $\MF{z}_{\MF{h}}$ and $\MF{z}_{\MF{q}}$
the centralizer of $\MF{a}$ in $\MF{g}$,
$\MF{h}$ and $\MF{q}$, respectively.
Then we have the following decompositions.

\begin{Lem}
Let $\Delta_{+}$ be a positive root system of $\Delta$.
\[
\MF{g}=\MF{z}_{\MF{g}}+\sum_{\lambda \in \Delta}\MF{g}_{\lambda},\quad
\MF{h} = \MF{z}_{\MF{h}} + \sum_{\lambda \in
 \Delta_{+}}\MF{h}_{\lambda},\quad
\MF{q} = \MF{z}_{\MF{q}} + \sum_{\lambda \in
 \Delta_{+}}\MF{q}_{\lambda}.
\]
\end{Lem}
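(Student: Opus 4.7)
The plan is to derive all three decompositions from a single root-space decomposition of $\MF{g}$ under $\OPE{ad}(\MF{a})$, then intersect with $\MF{h}$ and $\MF{q}$ using the fact that $\sigma$ flips the sign of roots.

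First I would establish the decomposition of $\MF{g}$. Since $\MF{a}$ is a maximal split abelian subspace, the operators $\{\OPE{ad}_{\MF{g}}(A)\mid A \in \MF{a}\}$ form a commuting family of semisimple endomorphisms. In the vector-type case ($\epsilon=0$) they are $\BS{R}$-diagonalizable, so simultaneous diagonalization gives $\MF{g} = \MF{z}_{\MF{g}} + \sum_{\lambda \in \Delta}\MF{g}_{\lambda}$ directly. In the troidal-type case ($\epsilon=1$) the eigenvalues are pure imaginary, so I would diagonalize on the complexification $\MF{g}^{\BS{C}}$ and then recombine complex conjugate eigenspaces $\MF{g}^{\BS{C}}_{\pm\sqrt{-1}\lambda}$ into the real subspace $\MF{g}_{\lambda}$ characterized by $\OPE{ad}(A)^{2}X = -\lambda(A)^{2}X$; this matches the definition given in the excerpt.

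Next I would exploit compatibility with $\sigma$. Since $\MF{a}\subset \MF{q}$, we have $\sigma(A)=-A$ for every $A\in\MF{a}$, and consequently $\sigma\circ\OPE{ad}(A) = -\OPE{ad}(A)\circ\sigma$. This yields $\sigma(\MF{g}_{\lambda}) = \MF{g}_{-\lambda}$ in the vector case and $\sigma(\MF{g}_{\lambda})=\MF{g}_{\lambda}$ in the troidal case (since the squaring kills the sign); in both cases $\MF{g}_{\lambda}+\MF{g}_{-\lambda}$ is $\sigma$-stable. Decomposing it into $(\pm 1)$-eigenspaces of $\sigma$, I would identify the $(+1)$-part with $\MF{h}_{\lambda}$ and the $(-1)$-part with $\MF{q}_{\lambda}$ as defined via $\OPE{ad}(A)^{2}$; crucially, these definitions are symmetric in $\lambda\leftrightarrow -\lambda$, so $\MF{h}_{\lambda}=\MF{h}_{-\lambda}$ and $\MF{q}_{\lambda}=\MF{q}_{-\lambda}$.

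Finally, applying $\sigma$-invariant projection to $\MF{g} = \MF{z}_{\MF{g}} + \sum_{\lambda\in\Delta}\MF{g}_{\lambda}$ (or equivalently intersecting with $\MF{h}$ and $\MF{q}$) gives $\MF{h} = \MF{z}_{\MF{h}} + \sum_{\lambda\in\Delta}\MF{h}_{\lambda}$ and similarly for $\MF{q}$. Since each $\MF{h}_{\lambda}$ already contains $\MF{h}_{-\lambda}$, the sum over $\Delta$ collapses to a sum over $\Delta_{+}$, yielding the stated identities. The only delicate point — and the main obstacle I anticipate — is treating the troidal-type case carefully, namely showing that the real subspace $\MF{g}_{\lambda}$ defined by the quadratic condition $\OPE{ad}(A)^{2}X = -\lambda(A)^{2}X$ indeed arises as the real form of $\MF{g}^{\BS{C}}_{\sqrt{-1}\lambda}\oplus\MF{g}^{\BS{C}}_{-\sqrt{-1}\lambda}$, and that its $\sigma$-eigenspaces coincide with $\MF{h}_{\lambda}$ and $\MF{q}_{\lambda}$; once this identification is in place, the arguments for vector-type and troidal-type cases run uniformly.
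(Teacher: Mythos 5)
Your proof is correct and is essentially the standard argument underlying this lemma, which the paper itself states without proof as a known fact from Rossmann and Oshima--Sekiguchi: simultaneously diagonalize the commuting semisimple family $\OPE{ad}(\MF{a})$ (over $\BS{R}$ in the vector-type case, over $\BS{C}$ with recombination of conjugate eigenspaces in the troidal-type case), observe that $\sigma(A)=-A$ gives $\sigma(\MF{g}_{\lambda})=\MF{g}_{-\lambda}$, and split each $\sigma$-stable block $\MF{g}_{\lambda}+\MF{g}_{-\lambda}$ into its $(\pm 1)$-eigenspaces $\MF{h}_{\lambda}$ and $\MF{q}_{\lambda}$, which collapses the sum to $\Delta_{+}$. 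The only step worth spelling out is that the quadratic condition $\OPE{ad}(A)^{2}X=(-1)^{\epsilon}\lambda(A)^{2}X$ for all $A\in\MF{a}$ really does pin down $\MF{g}_{\lambda}+\MF{g}_{-\lambda}$: if $\mu^{2}=\lambda^{2}$ as functions on $\MF{a}$, then $\MF{a}$ is the union of the two subspaces $(\mu-\lambda)^{-1}(0)$ and $(\mu+\lambda)^{-1}(0)$, hence equals one of them, so $\mu=\pm\lambda$.
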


\noindent
Denote by $s_{\lambda}$ the reflection of $\MF{a}$ along
the hyperplane $\lambda^{-1}(0)$,
and
$\MC{W}(\Delta)$ by the Weyl group of $\Delta$
(i.e., the group generated by $s_{\lambda}$'s ($\lambda \in \Delta$)).
Let $\varPsi$ be a simple root system of $\Delta$.
Then $\{A \in \MF{a}\mid\lambda(A)>0, \forall \lambda \in \varPsi\}$
is a Weyl chamber associated with $\Delta$,
and $\MC{W}(\Delta)$ acts simply transitively
on the set of the Weyl chambers associated with $\Delta$.

Next, we recall three Satake diagrams associated with three kind of 
restricted root systems associated with
semisimple pseudo-Riemannian symmetric spaces (cf.\,\cite{B2}).
For simplicity,
in the sequel, suppose that $\MF{a}$ is a vector-type maximal
split abelian subspace of $\MF{q}$.
It is known that $\MF{a}$ is vector-type
if and only if $\MF{a}$ is a maximal abelian subspace of $\MF{p}\cap\MF{q}$,
where $\MF{p}$ is the $(-1)$-eigenspace
of certain Cartan involution commuting with $\sigma$.
Fix a such Cartan involution $\theta$.
Set $\MF{k}:=\{X \in \MF{g}\mid\theta(X)=X\}$.
Let $\MF{a}_{\MF{q}}$ (resp.~$\MF{a}_{\MF{p}}$)
be a maximal abelian subspace of $\MF{q}$ (resp.~$\MF{p}$)
containing $\MF{a}$.
Let $\tilde{\MF{a}}$ be a maximal abelian subalgebra of $\MF{g}$
containing $\MF{a}_{\MF{q}}$ and $\MF{a}_{\MF{p}}$.
Then $\tilde{\MF{a}}^{\BS{C}}$ is a Cartan subalgebra of $\MF{g}^{\BS{C}}$.
Denote by $R$ the root system of $\MF{g}^{\BS{C}}$
with respect to $\tilde{\MF{a}}^{\BS{C}}$.
We define the vector $A_{\alpha} (\alpha \in R)$
of $\tilde{\MF{a}}^{\BS{C}}$ by
$B(A,A_{\alpha}) = \alpha(A)$ for all $A \in
\tilde{\MF{a}}^{\BS{C}}$,
where $B$ denotes the Killing form of $\MF{g}^{\BS{C}}$.
Set $\tilde{\MF{a}}_{\BS{R}} := \OPE{Span}_{\BS{R}}\{A_{\alpha} \mid \alpha
\in R\}.$
Then we have
$\tilde{\MF{a}}_{\BS{R}}= \sqrt{-1}\tilde{\MF{a}}\cap
(\MF{k}\cap\MF{h}) + \MF{a}_{\MF{p}}\cap\MF{h}+
\sqrt{-1}\MF{a}_{\MF{q}}\cap\MF{k} + \MF{a}$.
Fix a basis $\MC{A} := (A_{1},A_{2},\ldots,A_{r})$ of $\tilde{\MF{a}}_{\BS{R}}$
such that $(A_{1},A_{2},\ldots,A_{l})$ is a basis of $\MF{a}$,
$(A_{l+1},A_{l+2}\ldots,A_{m})$ is a basis of
$\sqrt{-1}(\MF{a}_{\MF{q}}\cap\MF{k})$,
$(A_{m+1},A_{m+2},\ldots,A_{m+n-l})$ is a basis of
$\MF{a}_{\MF{p}}\cap\MF{h}$, and
$(A_{m+n-l+1},\ldots,A_{r})$ is a basis of
$\sqrt{-1}\tilde{\MF{a}}\cap(\MF{k}\cap\MF{h})$,
where $l:=\OPE{s\mathchar`-rank}(\MF{g},\MF{h})$,
$m:=\OPE{rank}(\MF{g},\MF{h})$, $n:=\OPE{rank}(\MF{g},\MF{k})$ and
$r:=\OPE{rank}(\MF{g}^{\BS{C}})$.
Take the lexicographic ordering $>$ of
the dual space of $\tilde{\MF{a}}_{\BS{R}}$ with respect to $\MC{A}$.
Denote by $\varPhi$ the simple root system of $R$
with respect to $>$.
From the Dynkin diagram of $R$ associated with $\varPhi$
we construct the \textit{Satake diagram}
$S(G/H,\MF{a})$ (or $S(\MF{g},\MF{h},\MF{a})$)
associated with $G/H$ with respect to $\MF{a}$ as follows.
First,
replace white circles in the Dynkin diagram,
which imply elements in $\varPhi_{0}:=\{\alpha \in \varPhi\mid\alpha|_{\MF{a}}= 0\}$,
to a black circle.
Second,
if simple roots $\alpha, \beta \in \varPhi \setminus \varPhi_{0}$
and $\alpha|_{\MF{a}} =\beta|_{\MF{a}}$,
then join $\alpha$ to $\beta$ with a curved arrow.
Similarly, we construct the Satake diagrams
$S(G/H, \MF{a}_{\MF{q}})$
(or $S(\MF{g},\MF{h}, \MF{a}_{\MF{q}})$)
associated with $G/H$ with respect to $\MF{a}_{\MF{q}}$.
Note that the Satake diagram with respect to $\MF{a}_{\MF{p}}$
coincides with the Satake diagram $S(G/K, \MF{a}_{\MF{p}})$
of the Riemannian symmetric space $G/K$ with respect to
$\MF{a}_{\MF{p}}$,
where $K$ denotes the Lie subgroup of $G$ with Lie algebra $\MF{k}$.
By the definition of $>$,
$S(G/H,\MF{a})$, $S(G/H, \MF{a}_{\MF{q}})$
and $S(G/K,\MF{a}_{\MF{p}})$
are constructed from the Dynkin diagram of the same simple
root system of $\varPhi$.
Then $S(G/H,\MF{a})$, $S(G/H, \MF{a}_{\MF{q}})$
and $S(G/K,\MF{a}_{\MF{p}})$ are said to be \textit{compatible}
with one another.
Set
$\varPhi_{0,\MF{p}} :=\{\alpha \in R \mid \alpha|_{\MF{a}_{\MF{p}}}=0\}$.
Denote by $p_{\theta}$
the Satake involution of $S(G/K,\MF{a}_{\MF{p}})$.
By the classification of the Satake diagrams of
irreducible Riemannian symmetric spaces,
we have the following fact.

\begin{Lem}\label{lem.satake1}
Let $G/K$ be an irreducible Riemannian symmetric space.\break
Then the Satake diagram of $G/K$ has the following two properties.
\begin{enumerate}[$(1)$]
\item If $\beta \in \varPhi \setminus \varPhi_{0,\MF{p}}$ is
disconnected with the roots of $\varPhi_{0,\MF{p}}$,
then $(-\theta)\beta = p_{\theta}\beta$ holds.
\item If $\beta \in \varPhi \setminus \varPhi_{0,\MF{p}}$
is connected with a root of $\varPhi_{0,\MF{p}}$,
then $(-\theta)\beta \equiv p_{\theta}\beta$
$\OPE{mod}\, \{\langle\beta\rangle\}_{\BS{Z}}$,
where $\langle\beta\rangle$ $(\subset \varPhi_{0,\MF{p}})$ denotes
the union of the connected components of $\varPhi_{0,\MF{p}}$
connected with $\beta$.
\end{enumerate}
\end{Lem}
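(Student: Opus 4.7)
The plan is to combine a general root-theoretic congruence with a componentwise orthogonality argument, closing with a case check in the classification of Satake diagrams of irreducible Riemannian symmetric spaces.

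First I would establish $\gamma_{\beta} := (-\theta)\beta - p_{\theta}\beta \in \BS{Z}\varPhi_{0,\MF{p}}$. Since $\theta$ acts as $-\OPE{id}$ on $\MF{a}_{\MF{p}}$ and as $+\OPE{id}$ on $\sqrt{-1}\tilde{\MF{a}} \cap \MF{k}$, the dual involution $-\theta$ on $\tilde{\MF{a}}_{\BS{R}}^{*}$ preserves restrictions to $\MF{a}_{\MF{p}}$. By the defining property of $p_{\theta}$, the root $p_{\theta}\beta$ is characterised by $p_{\theta}\beta|_{\MF{a}_{\MF{p}}} = \beta|_{\MF{a}_{\MF{p}}}$, so $(-\theta)\beta$ and $p_{\theta}\beta$ share the same restriction; moreover both are positive roots by the lexicographic ordering (where $\MF{a}_{\MF{p}}$-positivity comes first). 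Hence $\gamma_{\beta}$ is a $\BS{Z}$-combination of roots of $R$ vanishing on $\MF{a}_{\MF{p}}$, that is, an element of $\BS{Z}\varPhi_{0,\MF{p}}$.

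Next I would localise the correction component-by-component. Fix a connected component $C$ of $\varPhi_{0,\MF{p}}$ such that $\beta$ is not adjacent to any root of $C$ in the Dynkin diagram, so that $\beta \perp \OPE{span}(C)$. Because $-\theta$ acts as $-\OPE{id}$ on $\OPE{span}(\varPhi_{0,\MF{p}})$, it preserves both $\OPE{span}(C)$ and its orthogonal complement, whence $(-\theta)\beta \perp \OPE{span}(C)$. The analogous property $p_{\theta}\beta \perp \OPE{span}(C)$ is verified by inspection of the irreducible Riemannian Satake diagrams. Granting this, $\gamma_{\beta} \perp \OPE{span}(C)$; writing $\gamma_{\beta} = \gamma_{\beta}^{C} + \gamma_{\beta}^{\mathrm{rest}}$ with $\gamma_{\beta}^{C} \in \BS{Z}C$ and $\gamma_{\beta}^{\mathrm{rest}} \in \BS{Z}(\varPhi_{0,\MF{p}} \setminus C)$, mutual orthogonality of distinct components gives $\gamma_{\beta}^{\mathrm{rest}} \perp \OPE{span}(C)$, so $\gamma_{\beta}^{C} \in \OPE{span}(C) \cap \OPE{span}(C)^{\perp} = \{0\}$ (using positive definiteness of the Killing form on $\OPE{span}(C)$). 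Summing over all components $C$ disjoint from $\langle\beta\rangle$ yields $\gamma_{\beta} \in \BS{Z}\langle\beta\rangle$, proving (2); the edge case $\langle\beta\rangle = \emptyset$ then gives $\gamma_{\beta} = 0$, which is (1).

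The main obstacle is the classification-dependent step that $p_{\theta}$ preserves orthogonality to each connected component $C$ of $\varPhi_{0,\MF{p}}$---equivalently, that $p_{\theta}$ (extended by the identity on $\varPhi_{0,\MF{p}}$) is an automorphism of the Dynkin diagram of $\varPhi$. A fully conceptual proof via the longest element $w_{0}$ of the Weyl group of $\varPhi_{0,\MF{p}}$ appears awkward, since $-\OPE{id}$ fails to lie in that Weyl group whenever a component has type $A_{n}$ with $n \geq 2$, $D_{2k+1}$, or $E_{6}$, so the tempting identity $p_{\theta} = w_{0}\cdot(-\theta)$ does not hold uniformly and one is pushed into a componentwise analysis in any case. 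The most efficient route is the direct case-by-case verification using Araki's classification of Satake diagrams, where the diagrammatic symmetry of $p_{\theta}$ is immediate.
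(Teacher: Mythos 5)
The paper offers no argument for this lemma beyond the phrase ``by the classification of the Satake diagrams of irreducible Riemannian symmetric spaces'', so your structural reduction is genuinely more informative: you isolate the two inputs (the congruence $(-\theta)\beta\equiv p_{\theta}\beta$ modulo $\BS{Z}\varPhi_{0,\MF{p}}$, and its localization to the components of $\varPhi_{0,\MF{p}}$ meeting $\beta$), and your orthogonality argument for killing the contribution of a component $C$ not adjacent to $\beta$ is correct, since $-\theta$ is an isometric involution acting as $-\OPE{id}$ on $\OPE{span}(\varPhi_{0,\MF{p}})$ and the Killing form is positive definite on the real span of the roots.

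Two steps need repair, however. First, your derivation of $\gamma_{\beta}\in\BS{Z}\varPhi_{0,\MF{p}}$ is a non sequitur: equality of the restrictions of $(-\theta)\beta$ and $p_{\theta}\beta$ to $\MF{a}_{\MF{p}}$ shows only that $\gamma_{\beta}$ annihilates $\MF{a}_{\MF{p}}$, and the sublattice of the root lattice annihilating $\MF{a}_{\MF{p}}$ is in general strictly larger than $\BS{Z}\varPhi_{0,\MF{p}}$ (indeed $\beta-p_{\theta}\beta$ itself lies in it whenever $p_{\theta}\beta\neq\beta$). Second, your reason for rejecting the $w_{0}$ route is mistaken: the identity $p_{\theta}=w_{0}\circ(-\theta)$ on $\varPhi\setminus\varPhi_{0,\MF{p}}$ holds uniformly, because it requires only that $w_{0}\circ(-\theta)$ preserve the positive system (it sends $R_{0}^{+}$ to $R_{0}^{-}$ and back, and preserves the $\MF{a}_{\MF{p}}$-restriction of every positive root outside $R_{0}$), not that $w_{0}$ act as $-\OPE{id}$ on $\OPE{span}(\varPhi_{0,\MF{p}})$. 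Once this identity is in hand, both missing pieces come for free and no case check is needed: $w_{0}\circ(-\theta)$ is an isometry permuting $\varPhi$, hence an automorphism of the Dynkin diagram, and it maps each connected component $C$ of $\varPhi_{0,\MF{p}}$ onto itself (as $w_{0}^{C}(C)=-C$), so $p_{\theta}\beta$ is adjacent to exactly the components adjacent to $\beta$; moreover $(-\theta)\beta-p_{\theta}\beta=(w_{0}-\OPE{id})\,p_{\theta}\beta$ lies in $\BS{Z}\varPhi_{0,\MF{p}}$ and, writing $w_{0}=\prod_{C}w_{0}^{C}$ as the product of the longest elements of the components, its $C$-part vanishes whenever $p_{\theta}\beta\perp\OPE{span}(C)$, so it is supported on $\langle p_{\theta}\beta\rangle=\langle\beta\rangle$. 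With these corrections your argument proves the lemma completely and conceptually, which is more than the paper's bare appeal to the classification.
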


\section{Proof of Theorem \ref{thm.main}}\label{Sec.proof}

Let $(\MF{g}, \MF{h})$ be a semisimple symmetric pair,
and $\sigma$ be a involution of $\MF{g}$
with $\MF{h}=\OPE{Ker}(\sigma-\OPE{id})$.
Suppose that $\MF{a}$ is a vector-type maximal split abelian subspace
of $\MF{q}\,(:= \OPE{Ker}(\sigma + \OPE{id}))$.
Denote by $\Delta$ the restricted root system of $(\MF{g}, \MF{h})$
with respect to $\MF{a}$.

\begin{proof}[Proof of Theorem \ref{thm.main}]

Let $\theta$ be a Cartan involution of $\MF{g}$
commuting with $\sigma$,
and $\MF{g}=\MF{k}+\MF{p}$ be the Cartan decomposition
corresponding to $\theta$.
Any vector-type maximal split abelian subspace is $\OPE{Ad}(H)$-conjugate
to a $\theta$-invariant one.
Without loss of generality, we may assume that
$\MF{a}$ is a maximal abelian subspace of $\MF{p}\cap\MF{q}$.
By Lemma 2 of \cite{MR518716},
if $A \in \MF{a}$ and $\OPE{Ad}(h)A \in \MF{a}\,(h \in H)$,
then there exists a $k \in N_{H\cap K}(\MF{a})$
such that $\OPE{Ad}(h)A = \OPE{Ad}(k)A$,
where $N_{H\cap K}(\MF{a}):=\{k \in H\cap K\mid\OPE{Ad}(k)\MF{a} = \MF{a}\}$.
Therefore $\OPE{Ad}(H)X$ and $\OPE{Ad}(H)Y\,(X,Y \in \MF{a})$
are same orbit if and only if $X$ is conjugate in $Y$ in $N_{H\cap K}(\MF{a})$.
Set $Z_{H\cap K}(\MF{a}):=\{h \in H\cap K\mid\OPE{Ad}(h)A=A, \forall A \in \MF{a}\}$.
Then the quotient group $N_{H\cap K}/Z_{H\cap K}$
is isomorphic to the Weyl group
$\MC{W}(\Delta^{a})$ associated with $\Delta^{a}$.
Let $w_{1},w_{2},\ldots,w_{l}$ be a complete system
of representatives for $\MC{W}(\Delta)/\MC{W}(\Delta^{a})$.
Then we can show that
$\MF{a}=\bigcup^{l}_{i=1}\overline{C(w_{i}\cdot \varPsi)}$ and
that
$
\bigcup_{i=1}^{l}\OPE{Ad}(H)(\overline{C(w_{i}\cdot \varPsi)})
$
coincides with the set of all the hyperbolic elements in $\MF{q}$,
where $C(w_{i}\cdot \varPsi):= \{A \in \MF{a}\mid\lambda(A) >0, \forall \lambda \in w_{i}\cdot \varPsi\}$ and $\overline{C(w_{i}\cdot \varPsi)}$ denotes the
closure of $C(w_{i}\cdot \varPsi)$.
Hence we have
\begin{align*}
\MC{L}_{h}(G/H)
&:=\{[\MF{h}_{A}]\mid\text{ $A \in \MF{q}$ is hyperbolic}\}\\
&=\bigcup_{i=1}^{l}\big\{[\MF{h}_{A}]\,\big|\,A \in \overline{C(w_{i}\cdot \varPsi)}\big\}.
\end{align*}

\noindent
For each $\Theta \subset w_{i}\cdot \varPsi$,
set $C(w_{i}\cdot\varPsi, \Theta) := \{A \in \MF{a}\mid\lambda(A)>0(\forall \lambda \in (w_{i}\cdot \varPsi)\setminus \Theta),\mu(A)=0(\forall \mu \in \Theta)\}
(\subset \overline{C(w_{i}\cdot \varPsi)})$.
Then, for any two subsets $\Theta, \Theta'$ of $w_{i}\cdot \varPsi$,
we have $C(w_{i}\cdot \varPsi, \Theta) \cap C(w_{i}\cdot\varPsi, \Theta')=\emptyset$ if $\Theta \neq \Theta'$.
Therefore $\overline{C(w_{i}\cdot \varPsi)}$ is decomposed 
as
\[
\overline{C(w_{i}\cdot \varPsi)} = \bigcup_{\Theta \subset w_{i}\cdot \varPsi}C(w_{i}\cdot\varPsi,\Theta)\quad(\MRM{disjoint}).\]
Moreover,
for any $A \in C(w_{i}\cdot\varPsi,\Theta)$,
$\Theta$ is a simple root system of $\Delta_{A}(:=\{\lambda \in \Delta\mid\lambda(A)=0\})$.
This implies that, for all $A \in C(w_{i}\cdot\varPsi, \Theta)$,
\[
\MF{h}_{A} = \MF{z}_{\MF{h}}+\sum_{\lambda \in \Delta_{A}\cap\Delta_{+}}\MF{h}_{\lambda}
            = \MF{z}_{\MF{h}}+\sum_{\lambda \in \Delta_{\Theta}\cap\Delta_{+}}\MF{h}_{\lambda}\\
            = \MF{h}_{\Theta},
\]
where $\Delta_{\Theta} = \Delta\cap\sum_{\lambda \in \Theta}\BS{R}\lambda$.
Hence we have
\begin{align*}
\MC{L}_{h}(G/H) &= \bigcup_{i=1}^{l}\bigcup_{\Theta \subset w_{i}\cdot \varPsi}\big\{[\MF{h}_{A}]\,\big|\,A \in C(w_{i}\cdot\varPsi, \Theta)\big\}\\
                &= \bigcup_{i=1}^{l}\big\{[\MF{h}_{\Theta}]\,\big|\,\Theta \subset w_{i}\cdot \varPsi\big\}.
\end{align*}
\end{proof}

\section{Complete systems of representatives for $\MC{W}(\Delta)/\MC{W}(\Delta^{a})$}\label{sec.weyl}

Let $(\MF{g}, \MF{h})$ be a semisimple symmetric pair
and $\sigma$ be an involution of $\MF{g}$
with $\MF{h}=\OPE{Ker}(\sigma-\OPE{id})$.
Suppose that $\theta$ is a Cartan involution of $\MF{g}$
commuting with $\sigma$,
and $\MF{a}$ is a maximal abelian subspace of $\MF{p}\cap\MF{q}$.
Denote by $\Delta$ the restricted root system of $(\MF{g}, \MF{h})$
with respect to $\MF{a}$.
Set $\MF{h}^{a} = \OPE{Ker}(\sigma\circ\theta - \OPE{id})$.
It is clear that $\MF{h}^{a} = \MF{k}\cap\MF{h}+\MF{p}\cap\MF{q}$ holds.
Then $\MF{h}^{a}$ is reductive, and
$(\MF{h}^{a}, \MF{k}\cap\MF{h})$ is a Riemannian
symmetric pair whose restricted root system is
$\Delta^{a}:=\{\lambda \in \Delta \mid \MF{g}_{\lambda} \cap \MF{h}^{a} \neq \{0\}\}$.
Denote by $\MC{W}(\Delta)$ (resp.\ $\MC{W}(\Delta^{a})$)
the Weyl group of $\Delta$ (resp.\ $\Delta^{a}$).
In this section,
for classical-type semisimple symmetric pairs $(\MF{g},\MF{h})$,
we shall give a complete system of representatives for
$\MC{W}(\Delta)/\MC{W}(\Delta^{a})$.
First, we shall list up
the types of $\Delta$ and $\Delta^{a}$, and
the index $|\MC{W}(\Delta)/\MC{W}(\Delta^{a})|$
of $\MC{W}(\Delta)/\MC{W}(\Delta^{a})$
(i.e.\ the cardinality of $\MC{W}(\Delta)/\MC{W}(\Delta^{a})$)
(see, Table \ref{table.index}).
In Table \ref{table.index},
$_{n}C_{k}$ is a binomial coefficient and,
for a Lie algebra $\MF{l}$, we denote by
$\MF{l}^{2}$ the direct sum $\MF{l}+\MF{l}$.

\begin{table}[htbp]
\centering
\caption{The index of $\MC{W}(\Delta)/\MC{W}(\Delta^{a})$}\label{table.index}
\scriptsize
\begin{tabular}{|p{55mm}|>{\PBS\centering}p{15mm}|>{\PBS\centering}p{21mm}|>{\PBS\centering}p{8mm}|>{\PBS\centering}p{18mm}|}
\hline
$(\MF{g},\MF{h})$&Type of $\Delta$& Type of $\Delta^{a}$&
Index&Remarks\\
\hline
\hline
\multirow{2}{*}{$(\MF{sl}(n,\BS{C}),\MF{sl}(n,\BS{R}))$}&
$(BC)_{m}$&$B_{m}$&$1$&$n=2m+1$\\\cline{2-5}
&$C_{m}$&$D_{m}$&$2$&$n=2m$\\\hline
$(\MF{sl}(n,\BS{R})^{2},\MF{sl}(n,\BS{R}))$
&$A_{n-1}$&$A_{n-1}$&$1$&\\\hline
$(\MF{sl}(n,\BS{C}),\MF{so}(n,\BS{C}))$
&$A_{n-1}$&$A_{n-1}$&$1$&\\\hline
$(\MF{sl}(2n,\BS{C}),\MF{su}^{*}(2n))$
&$C_{n}$&$C_{n}$&$1$&\\\hline
$(\MF{su}^{*}(2n)^{2},\MF{su}^{*}(2n))$
&$A_{n-1}$&$A_{n-1}$&$1$&\\\hline
$(\MF{sl}(2n,\BS{C}),\MF{sp}(n,\BS{C}))$
&$A_{n-1}$&$A_{n-1}$&$1$&\\\hline
$(\MF{sl}(n,\BS{C}),\MF{su}(p,n-p))$
&$A_{n-1}$&$A_{p-1}\times A_{n-p-1}$&$_{n}C_{p}$&\\\hline
\multirow{2}{*}{$(\MF{su}(p,n-p)^{2},\MF{su}(p,n-p))$}&
$(BC)_{p}$&$(BC)_{p}$&$1$&$n>2p$\\\cline{2-5}
&$C_{p}$&$C_{p}$&$1$&$n=2p$\\\hline
\multirow{2}{*}{$(\MF{sl}(n,\BS{C}),\MF{sl}(p,\BS{C})+\MF{sl}(n-p,\BS{C})+\BS{C})$}&
$(BC)_{p}$&$(BC)_{p}$&$1$&$n>2p$\\\cline{2-5}
&$C_{p}$&$C_{p}$&$1$&$n=2p$\\\hline
$(\MF{so}(2n,\BS{C}),\MF{so}^{*}(2n))$
&$D_{n}$&$A_{n-1}$&$2^{n-1}$&\\\hline
\multirow{2}{*}{$(\MF{so}^{*}(2n)^{2},\MF{so}^{*}(2n))$}&
$(BC)_{m}$&$(BC)_{m}$&$1$&$n=2m+1$\\\cline{2-5}
&$C_{m}$&$C_{m}$&$1$&$n=2m$\\\hline
\multirow{2}{*}{$(\MF{so}(2n,\BS{C}),\MF{sl}(n,\BS{C})+\BS{C})$}&
$(BC)_{m}$&$(BC)_{m}$&$1$&$n=2m+1$\\\cline{2-5}
&$C_{m}$&$C_{m}$&$1$&$n=2m$\\\hline
\multirow{6}{*}{$(\MF{so}(n,\BS{C}),\MF{so}(p,n-p))$}&
\multirow{2}{*}{$B_{m}$}&\multirow{2}{*}{$D_{q}\times B_{m-q}$}&\multirow{2}{*}{$2_{m}C_{q}$}&$n=2m+1$\\
&&&&$p=2q$\\\cline{2-5}
&\multirow{2}{*}{$B_{m}$}&\multirow{2}{*}{$B_{q}\times B_{m-q}$}&\multirow{2}{*}{$_{m}C_{q}$}&$n=2(m+1)$\\
&&&&$p=2q+1$\\\cline{2-5}
&\multirow{2}{*}{$D_{m}$}&\multirow{2}{*}{$D_{q}\times D_{m-q}$}&\multirow{2}{*}{$2_{m}C_{q}$}&$n=2m$\\
&&&&$p=2q$\\\hline
\end{tabular}
\end{table}

\begin{table}[htbp]
\centering
\contcaption{(continued)}
\scriptsize
\begin{tabular}{|p{55mm}|>{\PBS\centering}p{15mm}|>{\PBS\centering}p{21mm}|>{\PBS\centering}p{8mm}|>{\PBS\centering}p{18mm}|}
\hline
$(\MF{g},\MF{h})$&Type of $\Delta$& Type of $\Delta^{a}$&
Index&Remarks\\
\hline
\hline
\multirow{2}{*}{$(\MF{so}(p,n-p)^{2},\MF{so}(p,n-p))$}&
$B_{p}$&$B_{p}$&$1$&$n>2p$\\\cline{2-5}
&$D_{p}$&$D_{p}$&$1$&$n=2p$\\\hline
\multirow{2}{*}{$(\MF{so}(n,\BS{C}),\MF{so}(p,\BS{C})+\MF{so}(n-p,\BS{C}))$}&
$B_{p}$&$B_{p}$&$1$&$n>2p$\\\cline{2-5}
&$D_{p}$&$D_{p}$&$1$&$n=2p$\\\hline
$(\MF{sp}(n,\BS{C}),\MF{sp}(n,\BS{R}))$
&$C_{n}$&$A_{n-1}$&$2^{n}$&\\\hline
$(\MF{sp}(n,\BS{R})^{2},\MF{sp}(n,\BS{R}))$
&$C_{n}$&$C_{n}$&$1$&\\\hline
$(\MF{sp}(n,\BS{C}),\MF{sl}(n,\BS{C})+\BS{C})$
&$C_{n}$&$C_{n}$&$1$&\\\hline
$(\MF{sp}(n,\BS{C}),\MF{sp}(p,n-p))$
&$C_{n}$&$C_{p}\times C_{n-p}$&$_{n}C_{p}$&\\\hline
\multirow{2}{*}{$(\MF{sp}(p,n-p)^{2},\MF{sp}(p,n-p))$}&
$(BC)_{p}$&$(BC)_{p}$&$1$&$n>2p$\\\cline{2-5}
&$C_{p}$&$C_{p}$&$1$&$n=2p$\\\hline
\multirow{2}{*}{$(\MF{sp}(n,\BS{C}),\MF{sp}(p,\BS{C})+\MF{sp}(n-p,\BS{C}))$}&
$(BC)_{p}$&$(BC)_{p}$&$1$&$n>2p$\\\cline{2-5}
&$C_{p}$&$C_{p}$&$1$&$n=2p$\\\hline
$(\MF{sl}(n,\BS{R}),\MF{so}(p,n-p))$
&$A_{n-1}$&$A_{p-1}\times A_{n-p-1}$&$_{n}C_{p}$&\\\hline
\multirow{2}{*}{$(\MF{su}(p,n-p),\MF{so}(p,n-p))$}&
$(BC)_{p}$&$B_{p}$&$1$&$n>2p$\\\cline{2-5}
&$C_{p}$&$D_{p}$&$2$&$n=2p$\\\hline
\multirow{2}{*}{$(\MF{sl}(n,\BS{R}),\MF{sl}(p,\BS{R})+\MF{sl}(n-p,\BS{R})+\BS{R})$}&
$(BC)_{p}$&$B_{p}$&$1$&$n>2p$\\\cline{2-5}
&$C_{p}$&$D_{p}$&$2$&$n=2p$\\\hline
$(\MF{su}^{*}(2n),\MF{sp}(p,n-p))$
&$A_{n-1}$&$A_{p-1}\times A_{n-p-1}$&$_{n}C_{p}$&\\\hline
\multirow{2}{*}{$(\MF{su}(2p,2(n-p)),\MF{sp}(p,n-p))$}&
$(BC)_{p}$&$(BC)_{p}$&$1$&$n>2p$\\\cline{2-5}
&$C_{p}$&$C_{p}$&$1$&$n=2p$\\\hline
\multirow{2}{*}{$(\MF{su}^{*}(2n),\MF{su}^{*}(2p)+\MF{su}^{*}(2(n-p))+\BS{R})$}&
$(BC)_{p}$&$(BC)_{p}$&$1$&$n>2p$\\\cline{2-5}
&$C_{p}$&$C_{p}$&$1$&$n=2p$\\\hline
$(\MF{sl}(2n,\BS{R}),\MF{sp}(n,\BS{R}))$
&$A_{n-1}$&$A_{n-1}$&$1$&\\\hline
$(\MF{su}^{*}(2n),\MF{so}^{*}(2n))$
&$A_{n-1}$&$A_{n-1}$&$1$&\\\hline
$(\MF{su}(n,n),\MF{so}^{*}(2n))$
&$C_{n}$&$C_{n}$&$1$&\\\hline
$(\MF{sl}(2n,\BS{R}),\MF{sl}(n,\BS{C})+\MF{so}(2))$
&$C_{n}$&$C_{n}$&$1$&\\\hline
\multirow{2}{*}{$(\MF{su}^{*}(2n),\MF{sl}(n,\BS{C})+\MF{so}(2))$}&
$(BC)_{m}$&$(BC)_{m}$&$1$&$n=2m+1$\\\cline{2-5}
&$C_{m}$&$C_{m}$&$1$&$n=2m$\\\hline
\multirow{2}{*}{$(\MF{su}(n,n),\MF{sp}(n,\BS{R}))$}&
$(BC)_{m}$&$(BC)_{m}$&$1$&$n=2m+1$\\\cline{2-5}
&$C_{m}$&$C_{m}$&$1$&$n=2m$\\\hline
$(\MF{su}(n,n),\MF{sl}(n,\BS{C})+\BS{R})$
&$C_{n}$&$A_{n-1}$&$2^{n}$&\\\hline
\multirow{6}{*}{$(\MF{so}^{*}(2n),\MF{su}(p,n-p)+\MF{so}(2))$}&
\multirow{2}{*}{$(BC)_{m}$}&\multirow{2}{*}{$C_{q}\times (BC)_{m-q}$}&\multirow{2}{*}{$_{m}C_{q}$}&$n=2m+1$\\
&&&&$p=2q$\\\cline{2-5}
&\multirow{2}{*}{$(BC)_{m}$}&\multirow{2}{*}{$C_{q}\times (BC)_{m-q}$}&\multirow{2}{*}{$_{m}C_{q}$}&$n=2(m+1)$\\
&&&&$p=2q+1$\\\cline{2-5}
&\multirow{2}{*}{$C_{m}$}&\multirow{2}{*}{$C_{q}\times C_{m-q}$}&\multirow{2}{*}{$_{m}C_{q}$}&$n=2m$\\
&&&&$p=2q$\\\hline
\multirow{2}{*}{$(\MF{so}(2p,2(n-p)),\MF{su}(p,n-p)+\MF{so}(2))$}&
$(BC)_{p}$&$(BC)_{p}$&$1$&$n>2p$\\\cline{2-5}
&$C_{p}$&$C_{p}$&$1$&$n=2p$\\\hline
\multirow{2}{*}{$(\MF{so}^{*}(2n),\MF{so}^{*}(2p)+\MF{so}^{*}(2(n-p)))$}&
$(BC)_{p}$&$(BC)_{p}$&$1$&$n>2p$\\\cline{2-5}
&$C_{p}$&$C_{p}$&$1$&$n=2p$\\\hline
\end{tabular}
\end{table}

\begin{table}[htbp]
\centering
\contcaption{(continued)}
\scriptsize
\begin{tabular}{|p{55mm}|>{\PBS\centering}p{15mm}|>{\PBS\centering}p{21mm}|>{\PBS\centering}p{8mm}|>{\PBS\centering}p{18mm}|}
\hline
$(\MF{g},\MF{h})$&Type of $\Delta$& Type of $\Delta^{a}$&
Index&Remarks\\
\hline
\hline
$(\MF{so}(n,n),\MF{so}(n,\BS{C}))$
&$D_{n}$&$A_{n-1}$&$2^{n-1}$&\\\hline
\multirow{2}{*}{$(\MF{so}^{*}(2n),\MF{so}(n,\BS{C}))$}&
$(BC)_{m}$&$B_{m}$&$1$&$n=2m+1$\\\cline{2-5}
&$C_{m}$&$D_{m}$&$2$&$n=2m$\\\hline
\multirow{2}{*}{$(\MF{so}(n,n),\MF{sl}(n,\BS{R})+\BS{R})$}&
$(BC)_{m}$&$B_{m}$&$1$&$n=2m+1$\\\cline{2-5}
&$C_{m}$&$D_{m}$&$2$&$n=2m$\\\hline
$(\MF{so}^{*}(4n),\MF{su}^{*}(2n)+\BS{R})$
&$C_{n}$&$A_{n-1}$&$2^{n}$&\\\hline
$(\MF{sp}(n,\BS{R}),\MF{su}(p,n-p)+\MF{so}(2))$
&$C_{n}$&$C_{p}\times C_{n-p}$&$_{n}C_{p}$&\\\hline
\multirow{2}{*}{$(\MF{sp}(p,n-p),\MF{su}(p,n-p)+\MF{so}(2))$}&
$(BC)_{p}$&$(BC)_{p}$&$1$&$n>2p$\\\cline{2-5}
&$C_{p}$&$C_{p}$&$1$&$n=2p$\\\hline
\multirow{2}{*}{$(\MF{sp}(n,\BS{R}),\MF{sp}(p,\BS{R})+\MF{sp}(n-p,\BS{R}))$}&
$(BC)_{p}$&$(BC)_{p}$&$1$&$n>2p$\\\cline{2-5}
&$C_{p}$&$C_{p}$&$1$&$n=2p$\\\hline
$(\MF{sp}(n,\BS{R}),\MF{sl}(n,\BS{R})+\BS{R})$
&$C_{n}$&$A_{n-1}$&$2^{n}$&\\\hline
$(\MF{sp}(n,n),\MF{sp}(n,\BS{C}))$
&$C_{n}$&$A_{n-1}$&$2^{n}$&\\\hline
$(\MF{sp}(2n,\BS{R}),\MF{sp}(n,\BS{C}))$
&$C_{n}$&$C_{n}$&$1$&\\\hline
$(\MF{sp}(n,n),\MF{su}^{*}(2n)+\BS{R})$
&$C_{n}$&$C_{n}$&$1$&\\\hline
\end{tabular}
\end{table}

\begin{table}[htbp]
\centering
\scriptsize
\begin{tabular}{|>{\PBS\centering}p{25mm}|>{\PBS\centering}p{35mm}|>{\PBS\centering}p{25mm}|>{\PBS\centering}p{30mm}|}
\multicolumn{4}{l}{
$(\MF{g},\MF{h})=(\MF{su}(n,m),\MF{su}(i,j)+\MF{su}(n-i,m-j)+\MF{so}(2))$}\\
\hline
Type of $\Delta$& Type of $\Delta^{a}$&
Index&Remarks\\
\hline
\hline
$C_{n}$&$C_{i}\times C_{n-i}$&$_{n}C_{i}$&$i+j=n=m$\\\hline
$(BC)_{n}$&$C_{i}\times (BC)_{n-i}$&$_{n}C_{i}$&$n<i+j=m$\\\hline
$(BC)_{m+n-(i+j)}$&$(BC)_{m-j}\times (BC)_{n-i}$&$_{m+n-(i+j)}C_{n-i}$&$n\leq m<i+j$\\\hline
$(BC)_{n}$&$(BC)_{i}\times C_{n-i}$&$_{n}C_{i}$&$n=i+j<m$\\\hline
$(BC)_{n}$&$(BC)_{i}\times (BC)_{n-i}$&$_{n}C_{i}$&$n<i+j<m$\\\hline
$(BC)_{i+j}$&$(BC)_{i}\times (BC)_{j}$&$_{i+j}C_{i}$&$i+j<n\leq m$\\\hline
\end{tabular}
\end{table}

\begin{table}[htbp]
\centering
\scriptsize
\begin{tabular}{|>{\PBS\centering}p{25mm}|>{\PBS\centering}p{35mm}|>{\PBS\centering}p{25mm}|>{\PBS\centering}p{30mm}|}
\multicolumn{4}{l}{
$(\MF{g},\MF{h})=(\MF{so}(n,m),\MF{so}(i,j)+\MF{so}(n-i,m-j))$}\\
\hline
Type of $\Delta$& Type of $\Delta^{a}$&
Index&Remarks\\
\hline
\hline
$D_{n}$&$D_{i}\times D_{n-i}$&$2_{n}C_{i}$&$i+j=n=m$\\\hline
$B_{n}$&$D_{i}\times B_{n-i}$&$2_{n}C_{i}$&$n<i+j=m$\\\hline
$B_{m+n-(i+j)}$&$B_{m-j}\times B_{n-i}$&$_{m+n-(i+j)}C_{n-i}$&$n\leq m<i+j$\\\hline
$B_{n}$&$B_{i}\times D_{n-i}$&$2_{n}C_{i}$&$n=i+j<m$\\\hline
$B_{n}$&$B_{i}\times B_{n-i}$&$_{n}C_{i}$&$n<i+j<m$\\\hline
$B_{i+j}$&$B_{i}\times B_{j}$&$_{i+j}C_{i}$&$i+j<n\leq m$\\\hline
\end{tabular}
\end{table}

\begin{table}[htbp]
\centering
\scriptsize
\begin{tabular}{|>{\PBS\centering}p{25mm}|>{\PBS\centering}p{35mm}|>{\PBS\centering}p{25mm}|>{\PBS\centering}p{30mm}|}
\multicolumn{4}{l}{
$(\MF{g},\MF{h})=(\MF{sp}(n,m),\MF{sp}(i,j)+\MF{sp}(n-i,m-j))$}\\
\hline
Type of $\Delta$& Type of $\Delta^{a}$&
Index&Remarks\\
\hline
\hline
$C_{n}$&$C_{i}\times C_{n-i}$&$_{n}C_{i}$&$i+j=n=m$\\\hline
$(BC)_{n}$&$C_{i}\times (BC)_{n-i}$&$_{n}C_{i}$&$n<i+j=m$\\\hline
$(BC)_{m+n-(i+j)}$&$(BC)_{m-j}\times (BC)_{n-i}$&$_{m+n-(i+j)}C_{n-i}$&$n\leq m<i+j$\\\hline
$(BC)_{n}$&$(BC)_{i}\times C_{n-i}$&$_{n}C_{i}$&$n=i+j<m$\\\hline
$(BC)_{n}$&$(BC)_{i}\times (BC)_{n-i}$&$_{n}C_{i}$&$n<i+j<m$\\\hline
$(BC)_{i+j}$&$(BC)_{i}\times (BC)_{j}$&$_{i+j}C_{i}$&$i+j<n\leq m$\\\hline
\end{tabular}
\end{table}

If $\Delta=\Delta^{a}$ holds,
then $\{\OPE{id}\}$ ($\OPE{id}$: the identity transformation of $\MF{a}$)
gives a complete system of representative for
$\MC{W}(\Delta)/\MC{W}(\Delta^{a})$.
Assume that $(\MF{g}, \MF{h})$ is an irreducible symmetric pair
with $\Delta^{a} \subsetneq \Delta$.
Without loss of generality, we assume that
$(\MF{g},\MF{h})=(\MF{g}',\MF{h}'_{\varepsilon(\lambda)})$
for suitable basic symmetric pair $(\MF{g}',\MF{h}')$
and $\lambda \in \varPsi'$,
where $\varPsi'$ is a simple root system of
the restricted root system of $(\MF{g}',\MF{h}')$
and $\varepsilon(\lambda)$ denotes the signature of $\Delta$
defined by $\varepsilon(\mu)=1$ if $\mu \in \varPsi'\setminus \{\lambda\}$,
$\varepsilon(\lambda)=-1$
(cf.\,Section 6 in \cite{MR810638}).
Note that by using this assumption,
we can express the inclusion $\Delta^{a}\subset\Delta$
explicitly (see, Table V in \cite{MR810638} and Table 1, 2 in \cite{OS2}).

In the sequel,
we shall follow notations of irreducible root systems in \cite{MR1920389}:
\begin{align*}
A_{n} &= \{e_{i}-e_{j} \mid 1 \leq i \neq j \leq n+1 \},\\
B_{n} &= \{e_{i} \pm e_{j} \mid 1 \leq i < j \leq n \}\cup \{e_{i} \mid 1 \leq i \leq n\},\\
C_{n} &= \{e_{i} \pm e_{j} \mid 1 \leq i < j \leq n \}\cup \{2e_{i} \mid 1 \leq i \leq n\},\\
D_{n} &= \{e_{i} \pm e_{j} \mid 1 \leq i < j \leq n \},\\
(BC)_{n} &=\{e_{i} \pm e_{j} \mid 1 \leq i < j \leq n \}\cup \{e_{i}, 2e_{i} \mid 1 \leq i \leq n\}.
\end{align*}
\noindent
Denote by $s_{\lambda}$ $(\lambda \in \Delta)$
the reflection on
$\MF{a}$ along $\lambda^{-1}(0)$.

\subsection{Type $(\Delta,\Delta^{a})=(C_{n},A_{n-1})$.}\label{subsec.CA}

Assume that
\[A_{n-1}=\{e_{i}-e_{j}\mid 1 \leq i \neq j \leq n\} \subset C_{n}.\]
$\varPsi:=\{e_{i}-e_{i+1}\mid 1 \leq i \leq n-1\}\cup\{2e_{n}\}$
is a simple root system of $\Delta$.
For $\Delta=C_{n}$,
$\MC{W}(\Delta)$ is generated by all permutations of $e_{1},\ldots,e_{n}$,
and all sign changes of the coefficients for $e_{1},\ldots,e_{n}$.
Set
\[
t_{i}=\begin{cases}
s_{e_{i}-e_{i+1}}\cdots s_{e_{n-1}-e_{n}}s_{2e_{n}}s_{e_{n-1}-e_{n}}\cdots s_{e_{i}-e_{i+1}} & (1 \leq i \leq n-1),\\
s_{2e_{n}} & (i=n).
\end{cases}
\]
Since
$\MC{W}(\Delta^{a})$ is generated all permutations of $e_{1},\ldots,e_{n}$,
we have $t_{i} \not\in \MC{W}(\Delta^{a})$.
Therefore all sign changes, that is,
$
\left\{\prod_{i=1}^{n}t_{i}^{l_{i}} \mid l_{i}=0,1\right\}
$
gives a complete system of representatives
for $\MC{W}(\Delta)/\MC{W}(\Delta^{a})$.
Indeed,
\[
\left(\prod_{i=1}^{n}t_{i}^{l_{i}}\right)^{-1}\prod_{i=1}^{n}t_{i}^{k_{i}}
= \prod_{i=1}^{n}t_{i}^{l_{i}+k_{i}} \not\in \MC{W}(\Delta^{a})
\]
is equivalent to $l_{i_{0}}\neq k_{i_{0}}$ for some $1 \leq i_{0} \leq n$.

\subsection{Type $(\Delta,\Delta^{a})=(C_{n},D_{n})$.}\label{subsec.CD}
Assume that
\[
D_{n}=\{\pm e_{i}\pm e_{j}\mid 1 \leq i < j \leq n \} \subset C_{n}.
\]
$\varPsi:=\{e_{i}-e_{i+1}\mid 1 \leq i \leq n-1\}\cup\{2e_{n}\}$
is a simple root system of $\Delta$.
Since $\MC{W}(\Delta^{a})$ is generated by
all permutations of $e_{1},\ldots,e_{n}$ and
all even sign changes of $e_{1}, \ldots,e_{n}$,
we have $s_{2e_{n}} \not\in \MC{W}(\Delta^{a})$.
Hence $\left\{\OPE{id}, s_{2e_{n}}\right\}$
gives a complete system of representatives
for $\MC{W}(\Delta)/\MC{W}(\Delta^{a})$.

\subsection{Type $(\Delta,\Delta^{a})=(D_{n},A_{n-1})$.}\label{subsec.DA}

Assume that
\[
A_{n-1}=\{e_{i}-e_{j}\mid 1 \leq i \neq j \leq n\}\subset D_{n}.
\]
$\varPsi:=\{e_{i}-e_{i+1}\mid 1 \leq i \leq n-1\}\cup\{e_{n-1}+e_{n}\}$
is a simple root system of $\Delta$.
By the assumption,
$\MC{W}(\Delta^{a})$ is generated by all permutations of $e_{1},\ldots,e_{n}$.
By a similar argument for the case of type $(C_{n},A_{n-1})$,
all even sign changes, that is,
$\left\{
\prod_{i=1}^{n-1}(s_{e_{i}-e_{i+1}}s_{e_{i}+e_{i+1}})^{l_{i}}\mid l_{i}=0,1\right\}
$
give a complete system of representatives for 
$\MC{W}(\Delta)/\MC{W}(\Delta^{a})$.

\begin{Rem}
Assume that $\Delta=D_{n}$ and
$$\Delta^{a} =\{e_{i}-e_{j}\mid 1 \leq i \neq j \leq n-1\}\cup\{\pm(e_{i}+e_{n})\mid 1 \leq i \leq n-1\} (\cong A_{n-1}).$$
Then we can prove that
$\left\{
\prod_{i=1}^{n-1}(s_{e_{i}-e_{i+1}}s_{e_{i}+e_{i+1}})^{l_{i}} \mid l_{i}=0,1
\right\}
$
gives a complete system of representatives for
$\MC{W}(\Delta)/\MC{W}(\Delta^{a})$.
\end{Rem}

\subsection{Type $(\Delta,\Delta^{a})=(A_{n-1},A_{p-1}\times A_{n-p-1})$.}\label{subsec.AAA}
Assume that
\begin{align*}
A&_{p-1}\times A_{n-p-1}\\
&=\{e_{i} - e_{j}\mid 1 \leq i \neq j \leq p \}\cup
\{e_{i} - e_{j}\mid p+1 \leq i \neq j \leq n \} \subset A_{n-1}.
\end{align*}
$\varPsi:=\{e_{i}-e_{i+1}\mid 1 \leq i \leq n-1\}$
is a simple root system of $\Delta$ and
$\varPsi\cap\Delta^{a} = \{e_{i}-e_{i+1}\mid i \neq p\}$
is a simple root system of $\Delta^{a}$.
For $p=0, n$, we have $\Delta=\Delta^{a}$, so that $\OPE{id}$
gives a complete system of representative.
For $p=1,\ldots,n-1$,
we give the following formula for $\MC{W}(\Delta)/\MC{W}(\Delta^{a})$.
\begin{Prop}\label{lem.fmula1}
\begin{align}\label{fmula1}
&\MC{W}(\Delta)/\MC{W}(\Delta^{a}) \\\notag&= \MC{W}(\Gamma)/\MC{W}(\Gamma^{a})
\cup \left\{w s_{e_{n-1}-e_{n}}\cdots s_{e_{p}-e_{p+1}}\,\biggm|\,
w \in \MC{W}(\Gamma)/\MC{W}(\tilde{\Gamma})\right\},
\end{align}
where $\Gamma:=\{e_{i}-e_{j}\mid 1 \leq i\neq j \leq n-1\}(\subset \Delta)$,
$\Gamma^{a}:=\Gamma\cap\Delta^{a}$ and
$\tilde{\Gamma}:=\{e_{i}-e_{j} \mid 1 \leq i \neq j \leq p-1\} \cup \{e_{i}-e_{j}\mid p \leq i \neq j \leq n-1\}$.
\end{Prop}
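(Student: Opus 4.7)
The plan is to reduce the statement to a combinatorial identity for $p$-subsets via the natural actions of symmetric groups. I first identify $\MC{W}(\Delta)=\MC{W}(A_{n-1})=S_{n}$ acting on $\{1,\ldots,n\}$ by permuting $e_{1},\ldots,e_{n}$; then $\MC{W}(\Delta^{a})=S_{p}\times S_{n-p}$ (acting on the blocks $\{1,\ldots,p\}$ and $\{p+1,\ldots,n\}$), so that cosets in $\MC{W}(\Delta)/\MC{W}(\Delta^{a})$ are parametrised by $p$-subsets of $\{1,\ldots,n\}$ through $w\mapsto w(\{1,\ldots,p\})$. Likewise, $\MC{W}(\Gamma)$ is the stabilizer of $n$ inside $S_{n}$, while $\MC{W}(\Gamma^{a})=S_{p}\times S_{n-p-1}$ (on $\{1,\ldots,p\}$ and $\{p+1,\ldots,n-1\}$) and $\MC{W}(\tilde{\Gamma})=S_{p-1}\times S_{n-p}$ (on $\{1,\ldots,p-1\}$ and $\{p,\ldots,n-1\}$); their coset spaces in $\MC{W}(\Gamma)$ correspond respectively to $p$-subsets and $(p-1)$-subsets of $\{1,\ldots,n-1\}$.

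Next I compute the element $\tau:=s_{e_{n-1}-e_{n}}\cdots s_{e_{p}-e_{p+1}}$ explicitly. Since each $s_{e_{i}-e_{i+1}}$ acts on $\{1,\ldots,n\}$ as the transposition $(i,i+1)$, a direct application (right to left) shows that $\tau$ fixes $1,\ldots,p-1$, sends $p\mapsto n$, and sends $j\mapsto j-1$ for $p+1\leq j\leq n$. In particular $\tau(\{1,\ldots,p\})=\{1,\ldots,p-1,n\}$, so for any $w\in\MC{W}(\Gamma)$ (which fixes $n$) we have
\[
w\tau(\{1,\ldots,p\})=w(\{1,\ldots,p-1\})\cup\{n\}.
\]
Thus $w\tau$ is assigned the $p$-subset $w(\{1,\ldots,p-1\})\cup\{n\}$, which always contains $n$.

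Given these facts, the verification of (\ref{fmula1}) is a bookkeeping exercise. A complete system of representatives for $\MC{W}(\Gamma)/\MC{W}(\Gamma^{a})$ inside $\MC{W}(\Gamma)$, viewed in $\MC{W}(\Delta)$, yields through $w\mapsto w(\{1,\ldots,p\})$ exactly the $p$-subsets of $\{1,\ldots,n\}$ that do not contain $n$; moreover distinct representatives give distinct such subsets and therefore distinct $\MC{W}(\Delta^{a})$-cosets. In parallel, the translates $w\tau$ with $w$ running over a complete system for $\MC{W}(\Gamma)/\MC{W}(\tilde{\Gamma})$ yield, by the previous paragraph, exactly the $p$-subsets of the form $S\cup\{n\}$ with $S$ a $(p-1)$-subset of $\{1,\ldots,n-1\}$, that is, all $p$-subsets containing $n$. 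The two families are thus disjoint and together account for all $\binom{n}{p}$ cosets, as confirmed by Pascal's identity $\binom{n-1}{p}+\binom{n-1}{p-1}=\binom{n}{p}$.

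The main obstacle is the explicit identification of the permutation $\tau$ and the careful check that $w\mapsto w(\{1,\ldots,p-1\})\cup\{n\}$ yields a well-defined bijection from $\MC{W}(\Gamma)/\MC{W}(\tilde{\Gamma})$ onto the $p$-subsets of $\{1,\ldots,n\}$ containing $n$; both steps require precise index bookkeeping but no deep ideas. Once $\tau$ is correctly described, the remainder follows from the standard splitting of $p$-subsets of $\{1,\ldots,n\}$ according to whether $n$ appears.
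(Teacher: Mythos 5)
Your proof is correct and follows essentially the same route as the paper: both arguments identify a coset of $\MC{W}(\Delta^{a})$ with the image of $\{e_{1},\ldots,e_{p}\}$ and split the $p$-subsets of $\{1,\ldots,n\}$ according to whether $n$ appears, with Pascal's identity ${}_{n}C_{p}={}_{n-1}C_{p-1}+{}_{n-1}C_{p}$ accounting for the two families. The only difference is packaging: the paper checks pairwise non-equivalence of the listed representatives case by case and then invokes the cardinality count to get completeness, whereas you make the bijection with $p$-subsets explicit and so obtain surjectivity directly.
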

\begin{proof}
By using the formula ${}_{n}C_{p}={}_{n-1}C_{p-1}+{}_{n-1}C_{p}$,
the cardinalities of the both sides in $(\ref{fmula1})$ are the same.
Let $w_{i}\,(1 \leq i \leq {}_{n-1}C_{p-1})$
(resp.\,$z_{j}\,(1\leq j \leq {}_{n-1}C_{p})$) be a complete system of
representatives for
$\MC{W}(\Gamma)/\MC{W}(\Gamma^{a})$ (resp.\,$\MC{W}(\Gamma)/\MC{W}(\tilde{\Gamma})$).
By the definition of $\Gamma^{a}$,
$\MC{W}(\Gamma^{a})$ is generated by all permutations of
$e_{1}, \ldots, e_{p}$ and all permutations of $e_{p+1}, \ldots,e_{n-1}$.
For $w_{i},w_{j}\,(i\neq j)$,
we have $w_{i}^{-1}w_{j}(e_{p})=e_{k}$
for some $p+1 \leq k \leq n-1$.
This implies that
$w_{i}^{-1}w_{j} \not\in \MC{W}(\Delta^{a})$ holds.
Indeed, any element in $\MC{W}(\Delta^{a})$
must preserve $\{e_{1},\ldots,e_{p}\}$ invariantly.
For $w_{i}, z_{j}$,
we have
\[w_{i}^{-1}(z_{j}s_{e_{n-1}-e_{n}}\cdots s_{e_{p}-e_{p+1}})(e_{p})
=w_{i}^{-1}z_{j}(e_{n})=e_{n}.\]
This implies that
$w_{i}^{-1}(z_{j}s_{e_{n-1}-e_{n}}\cdots s_{e_{p}-e_{p+1}}) \not\in \MC{W}(\Delta^{a})$ holds.
For $z_{i},z_{j}\,(i\neq j)$,
we have $z_{i}^{-1}z_{j}(e_{k_{1}})=e_{k_{2}}$
for some $1 \leq k_{1} \leq p-1$ and $p \leq k_{2} \leq n-1$.
Then we have 
\begin{align*}
(z_{i}&s_{e_{n-1}-e_{n}}\cdots s_{e_{p}-e_{p+1}})^{-1}(z_{j}s_{e_{n-1}-e_{n}}\cdots s_{e_{p}-e_{p+1}})(e_{k_{1}})\\
&=(s_{e_{p}-e_{p+1}} \cdots s_{e_{n-1}-e_{n}})(z_{i}^{-1}z_{j})(e_{k_{1}})
=(s_{e_{p}-e_{p+1}} \cdots s_{e_{n-1}-e_{n}})(e_{k_{2}})\\
& \in \{e_{p+1},\ldots,e_{n}\}.
\end{align*}
Therefore
$(z_{i}s_{e_{n-1}-e_{n}}\cdots s_{e_{p}-e_{p+1}})^{-1}(z_{j}s_{e_{n-1}-e_{n}}\cdots s_{e_{p}-e_{p+1}}) \not\in \MC{W}(\Delta^{a})$.
Hence all $w_{i}$'s and $z_{j}$'s are not equivalent to each other
in $\MC{W}(\Delta^{a})$.
This proves Proposition \ref{lem.fmula1}.
\end{proof}

\begin{Rem}
By using Proposition \ref{lem.fmula1},
we can give a complete system of representatives for
$\MC{W}(\Delta)/\MC{W}(\Delta^{a})$, recursively.
\end{Rem}

\begin{Ex}[Type $(\Delta,\Delta^{a})=(A_{4},A_{1}\times A_{2})$]\label{ex.weyl1}
We shall give a complete system of representatives
for $\MC{W}(\Delta)/\MC{W}(\Delta^{a})$
by using Proposition \ref{lem.fmula1}.
Set
$\Gamma^{k}:=\{e_{i}-e_{j}\mid 1 \leq i \neq j \leq k\} (\subset \Delta)$
for $1 \leq k \leq 5$, and
\[
\Gamma^{l,p}=\{e_{i}-e_{j}\mid 1 \leq i \neq j \leq p\}\cup\{e_{i}-e_{j}\mid p+1 \leq i \neq j \leq l\} (\subset \Gamma^{l})
\]
for $0 \leq p \leq l \leq 5$.
Set $s_{i}=s_{e_{i}-e_{i+1}}$ for $1 \leq i \leq 4$.
Then we have
\begin{align*}
&\MC{W}(\Delta)/\MC{W}(\Delta^{a})\\
&=\MC{W}(\Gamma^{5})/\MC{W}(\Gamma^{5,2})\\
&=\MC{W}(\Gamma^{4})/\MC{W}(\Gamma^{4,2})\cup\{ws_{4}s_{3}s_{2}\mid w \in \MC{W}(\Gamma^{4})/\MC{W}(\Gamma^{4,1})\}\\
&=\MC{W}(\Gamma^{3})/\MC{W}(\Gamma^{3,2})
\cup\{ws_{3}s_{2}\mid w \in \MC{W}(\Gamma^{3})/\MC{W}(\Gamma^{3,1})\}\\
&\phantom{\Gamma)\cup\{ws_{3}s_{2}\mid w \in \MC{W}(\Gamma^{3})/\MC{W}(\Gamma^{3,1})\}
}
\cup\{ws_{4}s_{3}s_{2}\mid w \in \MC{W}(\Gamma^{4})/\MC{W}(\Gamma^{4,1})\}\\
&=\MC{W}(\Gamma^{2})/\MC{W}(\Gamma^{2,2})
\cup\{ws_{2}\mid w \in \MC{W}(\Gamma^{2})/\MC{W}(\Gamma^{2,1})\}\\
&\phantom{\MC{W}}
\cup\{ws_{3}s_{2}\mid w \in \MC{W}(\Gamma^{3})/\MC{W}(\Gamma^{3,1})\}
\cup\{ws_{4}s_{3}s_{2}\mid w \in \MC{W}(\Gamma^{4})/\MC{W}(\Gamma^{4,1})\}
\end{align*}
Moreover, we have $\MC{W}(\Gamma^{2})/\MC{W}(\Gamma^{2,2})=\{\OPE{id}\}$,
\begin{align*}
\MC{W}(\Gamma^{2})/\MC{W}(\Gamma^{2,1})
&= \MC{W}(\Gamma^{1})/\MC{W}(\Gamma^{1,1})\cup\{ws_{1}\mid w \in \MC{W}(\Gamma^{1})/\MC{W}(\Gamma^{1,0})\}\\
&= \{\OPE{id}, s_{1}\},\\
\MC{W}(\Gamma^{3})/\MC{W}(\Gamma^{3,1})
&= \MC{W}(\Gamma^{2})/\MC{W}(\Gamma^{2,1})\cup\{ws_{2}s_{1}\mid w \in \MC{W}(\Gamma^{2})/\MC{W}(\Gamma^{2,0})\}\\
&= \{\OPE{id}, s_{1}, s_{2}s_{1}\},\\
\MC{W}(\Gamma^{4})/\MC{W}(\Gamma^{4,1})
&= \MC{W}(\Gamma^{3})/\MC{W}(\Gamma^{3,1})\cup\{ws_{3}s_{2}s_{1}\mid w \in \MC{W}(\Gamma^{3})/\MC{W}(\Gamma^{3,0})\}\\
&= \{\OPE{id}, s_{1}, s_{2}s_{1}, s_{3}s_{2}s_{1}\}.\\
\end{align*}
Therefore we conclude that
\[
\MC{W}(\Delta)/\MC{W}(\Delta^{a})=
\left\{
\begin{split}
&\OPE{id},
s_{2}, s_{1}s_{2},
s_{3}s_{2}, s_{1}s_{3}s_{2}, s_{2}s_{1}s_{3}s_{2},
s_{4}s_{3}s_{2},\\
&s_{1}s_{4}s_{3}s_{2},
s_{2}s_{1}s_{4}s_{3}s_{2},
s_{3}s_{2}s_{1}s_{4}s_{3}s_{2}
\end{split}
\right\}.
\]
\end{Ex}

\subsection{Type $(\Delta,\Delta^{a})=(B_{n},B_{p}\times B_{n-p})$,
$(C_{n},C_{p}\times C_{n-p})$, $((BC)_{n},C_{p}\times (BC)_{n-p})$ or
$((BC)_{n},(BC)_{p}\times (BC)_{n-p})$.}\label{subsec.BBBetc}
\hspace{-1mm}We consider the case where
$(\Delta,\Delta^{a})$ is of type $(B_{n},B_{p}\times B_{n-p})$.
Assume that
\begin{align*}
B_{p}&\times B_{n-p}\\
&= \{\pm e_{i}\pm e_{j}\mid 1 \leq i < j \leq p\}\\
&\phantom{e_{i}\pm e_{j}}\cup \{\pm e_{i}\pm e_{j}\mid p+1 \leq i < j \leq n\}
\cup \{\pm e_{i} \mid 1 \leq i \leq n\}\subset B_{n}.
\end{align*}
Since $\MC{W}(\Delta)$ is equal to the Weyl group for $C_{n}$
(cf.\,Subsection \ref{subsec.CA}),
and $\MC{W}(\Delta^{a})$
is generated by all permutations of $e_{1},\ldots,e_{p}$,
all permutations of $e_{p+1},\ldots e_{n}$ and all sign changes
of the coefficients of $e_{1},\ldots,e_{n}$,
we have
$
\MC{W}(\Delta)/\MC{W}(\Delta^{a})= \MC{W}(\Lambda)/\MC{W}(\Lambda^{a}),
$
where $\Lambda :=\{e_{i} - e_{j}\mid 1 \leq i \neq j \leq n\} (\subset \Delta)$
and $\Lambda^{a} = \Lambda \cap \Delta^{a}$.
Moreover, by applying Proposition \ref{lem.fmula1}
to $\MC{W}(\Lambda)/\MC{W}(\Lambda^{a})$,
we can give a complete system of representatives for $\MC{W}(\Delta)/\MC{W}(\Delta^{a})$.
The arguments in the other cases are similar.

\subsection{Type $(\Delta,\Delta^{a})=(D_{n},D_{p}\times D_{n-p})$
or $(B_{n},D_{p}\times B_{n-p})$.}\label{subsec.DDDetc}

We consider the case where
$(\Delta, \Delta^{a})$ is of type $(D_{n},D_{p}\times D_{n-p})$.
Assume that
\begin{align*}
&D_{p}\times D_{n-p}\\
&=\{\pm e_{i} \pm e_{j} \mid 1 \leq i < j \leq p\}\cup
\{\pm e_{i} \pm e_{j} \mid p+1 \leq i < j \leq n\}
\subset D_{n}.
\end{align*}
Then $\MC{W}(\Delta^{a})$
is generated by all permutations of $e_{1}, \ldots, e_{p}$,
all permutations of $e_{1}, \ldots, e_{p}$,
all even sign changes of $e_{1}, \ldots, e_{p}$
and all even sign changes of $e_{p+1}, \ldots, e_{n}$.
In particular, 
$s_{e_{p}-e_{p+1}}s_{e_{p}+e_{p+1}} \not\in \MC{W}(\Delta^{a})$
holds.
Hence
\[
\MC{W}(\Delta)/\MC{W}(\Delta^{a})=\{(s_{e_{p}-e_{p+1}}s_{e_{p}+e_{p+1}})^{l}
w \mid l=0,1, w\in \MC{W}(\Lambda)/\MC{W}(\Lambda^{a})\},
\]
where $\Lambda :=\{e_{i} - e_{j}\mid 1 \leq i \neq j \leq n\} (\subset \Delta)$
and $\Lambda^{a}=\Lambda \cap \Delta^{a}$.
By applying Proposition \ref{lem.fmula1} to $\MC{W}(\Lambda)/\MC{W}(\Lambda^{a})$,
we can give a complete system of representatives for
$\MC{W}(\Delta)/\MC{W}(\Delta^{a})$.
By a similar argument,
we can give a complete system of representatives in the case of type
$(B_{n},D_{p}\times B_{n-p})$.

\section{Determination of hyperbolic principal isotropy subalgebras}\label{sec.hprin}

In this section,
we shall determine the
(Lie algebra) structure of the HPIS for s-representation
of an irreducible semisimple pseudo-Riemannian
symmetric space $G/H$ by using the Satake diagrams associated with $G/H$.
Denote by $\MF{g}$ (resp.\,$\MF{h}$) the Lie algebra of $G$ (resp.\,$H$).
Let $\sigma$ be an involution of $\MF{g}$ with
$\MF{h}=\OPE{Ker}(\sigma -\OPE{id})$.
Suppose that $\theta$ is a Cartan involution of $\MF{g}$
commuting with $\sigma$.
Set $\MF{k}:=\OPE{Ker}(\theta -\OPE{id})$
and $\MF{p}:=\OPE{Ker}(\theta + \OPE{id})$.
Let $\MF{a}$ be a maximal abelian subspace of $\MF{p}\cap\MF{q}$
and $\MF{a}_{\MF{q}}$ (resp.\,$\MF{a}_{\MF{p}}$)
be a maximal abelian subspace of $\MF{q}$ (resp.\,$\MF{p}$)
containing $\MF{a}$.
Let $\tilde{\MF{a}}$ be a maximal abelian subalgebra of $\MF{g}$
containing $\MF{a}_{\MF{q}}+\MF{a}_{\MF{p}}$.
Note that any hyperbolic principal isotropy subalgebra
is equal to the centralizer $\MF{z}_{\MF{h}}$ of $\MF{a}$ in $\MF{h}$.
Denote by $\MF{z}_{\MF{g}}$ the centralizer of $\MF{a}$
in $\MF{g}$.
Since $\MF{z}_{\MF{g}}$ is invariant under $\sigma$ and $\theta$,
we have the decomposition
\[
\MF{z}_{\MF{g}}
=\MF{z}_{\MF{g}}\cap(\MF{k}\cap\MF{h})+\MF{z}_{\MF{g}}\cap(\MF{p}\cap\MF{h})
+\MF{z}_{\MF{g}}\cap(\MF{k}\cap\MF{q})+\MF{z}_{\MF{g}}\cap(\MF{p}\cap\MF{q}).
\]
It is clear that
$\MF{z}_{\MF{g}}\cap(\MF{p}\cap\MF{q})=\MF{a}$.
We also have the decomposition
$\MF{z}_{\MF{g}}=\MF{z}_{\MF{g}}^{c}+\MF{z}_{\MF{g}}^{s}$,
where $\MF{z}_{\MF{g}}^{c}$ (resp.\,$\MF{z}_{\MF{g}}^{s}$)
denotes the center (resp.\,the semisimple part) of $\MF{z}_{\MF{g}}$.
It is clear that $\MF{z}^{c}_{\MF{g}}$ is contained in $\tilde{\MF{a}}$.
Then their decomposition are compatible, i.e.,
\begin{align*}
\MF{z}_{\MF{g}}^{c}&=
\MF{z}_{\MF{g}}^{c}\cap(\MF{k}\cap\MF{h})
+\MF{z}_{\MF{g}}^{c}\cap(\MF{p}\cap\MF{h})
+\MF{z}_{\MF{g}}^{c}\cap(\MF{k}\cap\MF{q})
+\MF{z}_{\MF{g}}^{c}\cap(\MF{p}\cap\MF{q}),\\
\MF{z}_{\MF{g}}^{s}&=
\MF{z}_{\MF{g}}^{s}\cap(\MF{k}\cap\MF{h})
+\MF{z}_{\MF{g}}^{s}\cap(\MF{p}\cap\MF{h})
+\MF{z}_{\MF{g}}^{s}\cap(\MF{k}\cap\MF{q})
+\MF{z}_{\MF{g}}^{s}\cap(\MF{p}\cap\MF{q}).
 \end{align*}
Set $\tilde{\MF{a}}^{s}=\tilde{\MF{a}}\cap\MF{z}_{\MF{g}}^{s}$
and $\tilde{\MF{a}}^{c}=\tilde{\MF{a}}\cap\MF{z}_{\MF{g}}^{c}$.
We can obtain the dimension of $\tilde{\MF{a}}^{c}\cap(\MF{k}\cap\MF{h})$
(resp.\,$\tilde{\MF{a}}^{c}\cap(\MF{p}\cap\MF{h}),\,
\tilde{\MF{a}}^{c}\cap(\MF{k}\cap\MF{q})$) by calculating
$\dim \tilde{\MF{a}}\cap(\MF{k}\cap\MF{h})
- \dim \tilde{\MF{a}}^{s}\cap(\MF{k}\cap\MF{h})$
(resp.\, $\dim \tilde{\MF{a}}\cap(\MF{p}\cap\MF{h})
- \dim \tilde{\MF{a}}^{s}\cap(\MF{p}\cap\MF{h}),\,
\dim\tilde{\MF{a}}\cap(\MF{k}\cap\MF{q})
- \dim \tilde{\MF{a}}^{s}\cap(\MF{k}\cap\MF{q})$).
(cf.\,Recipe \ref{recipe.hprin})

\subsection{The structure of $\MF{z}_{\MF{g}}^{\BS{C}}$}

Denote by $R$ the root system of $\MF{g}^{\BS{C}}$
with respect to $\tilde{\MF{a}}^{\BS{C}}$,
and by $\MF{g}^{\BS{C}}_{\alpha}$ the root space of $\MF{g}^{\BS{C}}$
associated with $\alpha \in R$.
Then we have the decomposition
\[
(\MF{z}_{\MF{g}}^{s})^{\BS{C}}
= \OPE{Span}_{\BS{C}}\{A_{\alpha}\mid \alpha \in R_{0}\}
+ \sum_{\alpha \in R_{0}}\MF{g}_{\alpha}^{\BS{C}},
\]
and $\dim_{\BS{C}}(\MF{z}_{\MF{g}}^{c})^{\BS{C}}=
\dim_{\BS{C}} \tilde{\MF{a}}^{\BS{C}}
-\OPE{rank}R_{0}$,
where $A_{\alpha} \in \tilde{\MF{a}}^{\BS{C}}$ ($\alpha \in R$)
defined by $\alpha(A)=B(A,A_{\alpha})$ for all $A \in \tilde{\MF{a}}^{\BS{C}}$
($B$ is the Killing form of $\MF{g}^{\BS{C}}$), and $R_{0}=\{\alpha \in R\mid \alpha|_{\MF{a}} = 0\}$.
Let $\varPhi$ be a simple root system of $R$.
Since $\varPhi_{0}(:=\varPhi \cap R_{0})$ is a simple root system of
$R_{0}$,
the black circles in the Satake diagram associated with $G/H$
with respect to $\MF{a}$ determines the Dynkin diagram of
$(\MF{z}_{\MF{g}}^{s})^{\BS{C}}$.
In particular,
the rank of $R_{0}$
is equal to the number of the black circles.

\subsection{The structures of the semisimple part of $\MF{z}_{\MF{g}}$
and $\MF{z}_{\MF{h}}$}

In this subsection,
we shall determine the structures
$\MF{z}_{\MF{g}}^{s}$
and $\MF{z}_{\MF{h}}$.
Suppose that the Satake diagrams
associated with
$G/H$ with respect to $\MF{a}$, $\MF{a}_{\MF{q}}$ and $\MF{a}_{\MF{p}}$
are compatible with one another.
Denote by $p_{\sigma}$ (resp.\,$p_{\theta}$)
the Satake involution of the Satake diagram associated with $G/H$
with respect to $\MF{a}_{\MF{q}}$ (resp.\,$\MF{a}_{\MF{p}}$).
Let $R_{0}=R^{1}_{0}\cup R_{0}^{2}\cdots\cup R_{0}^{k}$
be the irreducible decomposition of $R_{0}$.
Denote by $\MF{z}(\Gamma)$ ($\Gamma$ is a closed subsystem of $R_{0}$)
the subalgebra of $(\MF{z}^{s}_{\MF{g}})^{\BS{C}}$
generated by $\{\MF{g}^{\BS{C}}_{\alpha}\mid \alpha \in \Gamma\}$.
Set, for each $i \in \{1,\ldots,k\}$,
$\varPhi^{i}_{0}:=\varPhi_{0}\cap R_{0}^{i}$,
$\varPhi_{0,\MF{q}}^{i}:=\{\alpha  \in\varPhi_{0}^{i}\mid
\alpha|_{\MF{a}_{\MF{q}}}= 0\}$ and
$\varPhi_{0,\MF{p}}^{i}:=\{\alpha  \in\varPhi_{0}^{i}\mid
\alpha|_{\MF{a}_{\MF{p}}}= 0\}$.
Then, for each $i \in \{1, \ldots, k\}$,
$\varPhi^{i}_{0}$ satisfies one of the following.
\begin{enumerate}[C{a}se 1:]
\item $\varPhi_{0}^{i}=\varPhi_{0,\MF{q}}^{i}=\varPhi_{0,\MF{p}}^{i}$.
\item $\varPhi_{0}^{i}\setminus \varPhi_{0,\MF{p}}^{i}(\neq \emptyset)$
is $p_{\theta}$-invariant and $\varPhi_{0}^{i}=\varPhi_{0,\MF{q}}^{i}$.
\item $\varPhi_{0}^{i}=\varPhi_{0,\MF{p}}^{i}$ and
$\varPhi_{0}^{i}\setminus \varPhi_{0,\MF{q}}^{i} (\neq \emptyset)$ is
$p_{\sigma}$-invariant.
\item $\varPhi_{0}^{i}\setminus \varPhi_{0,\MF{p}}^{i}(\neq \emptyset)$
is $p_{\theta}$-invariant and $\varPhi_{0}^{i}\setminus \varPhi_{0,\MF{q}}^{i} (\neq \emptyset)$ is $p_{\sigma}$-invariant.
\item $\varPhi_{0}^{i}\setminus \varPhi_{0,\MF{p}}^{i}(\neq \emptyset)$
is not $p_{\theta}$-invariant.
\item $\varPhi_{0}^{i}\setminus \varPhi_{0,\MF{p}}^{i}(\neq \emptyset)$
is not $p_{\sigma}$-invariant.
\end{enumerate}

\noindent
Set $\MF{g}^{d}:=\MF{k}\cap\MF{h}+\sqrt{-1}\MF{p}\cap\MF{h}+\sqrt{-1}\MF{k}\cap\MF{q}+\MF{p}\cap\MF{q}(\subset \MF{g}^{\BS{C}})$.
Denote by $\tau$ (resp.\,$\tau^{d}$)
the conjugation of $\MF{g}^{\BS{C}}$
with respect to $\MF{g}$ (resp.\,$\MF{g}^{d}$).
For each $\alpha \in R$, the linear functions
$\tau \cdot \alpha$, $\tau^{d}\cdot \alpha$, $\theta \cdot \alpha$
and $\sigma \cdot \alpha$ defined by,
for all $A \in \tilde{\MF{a}}^{\BS{C}}$,
\begin{align*}
(\tau \cdot \alpha)(A)&=\overline{\alpha(\tau A)},\quad
(\tau^{d}\cdot \alpha)(A)=\overline{\alpha(\tau^{d}A)},\\
(\theta \cdot \alpha)(A) &= \alpha(\theta A),\quad
(\sigma \cdot \alpha)(A) = \alpha(\sigma A),
\end{align*}
are again elements of $R$.

\begin{Lem}\label{lem.inv1}
If $\varPhi_{0}^{i} \setminus \varPhi_{0,\MF{p}}^{i}(\neq \emptyset)$
is $p_{\theta}$-invariant, then $R_{0}^{i}$
is $\theta$-invariant.
\end{Lem}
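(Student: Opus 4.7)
The plan is to show $R_{0}^{i}$ is $\theta$-invariant by checking that $\theta$ carries the simple root system $\varPhi_{0}^{i}$ of $R_{0}^{i}$ into $R_{0}^{i}$. Since $\varPhi_{0}$ is a simple root system of $R_{0}$ and the irreducible components $R_{0}^{i}$ correspond bijectively to connected components $\varPhi_{0}^{i}$ of the Dynkin subdiagram of $\varPhi_{0}$, once one knows $\theta\cdot\varPhi_{0}^{i}\subset R_{0}^{i}$ the conclusion follows: $\theta$ is a $\BS{Z}$-linear automorphism of the ambient root system $R$, hence permutes irreducible subsystems of any $\theta$-stable sub-root-system, and the fact that it moves some simple roots of $R_{0}^{i}$ into $R_{0}^{i}$ pins down which component it lands in.

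First I would verify that $R_{0}$ itself is $\theta$-stable. From the decomposition $\tilde{\MF{a}}_{\BS{R}}=\sqrt{-1}\tilde{\MF{a}}\cap(\MF{k}\cap\MF{h})+\MF{a}_{\MF{p}}\cap\MF{h}+\sqrt{-1}\MF{a}_{\MF{q}}\cap\MF{k}+\MF{a}$, $\theta$ acts as $-1$ on $\MF{a}_{\MF{p}}=(\MF{a}_{\MF{p}}\cap\MF{h})+\MF{a}$ and as $+1$ on the remaining summand $\sqrt{-1}(\tilde{\MF{a}}\cap\MF{k})$. In particular $\theta|_{\MF{a}}=-\MRM{id}$, so $\alpha|_{\MF{a}}=0$ forces $(\theta\cdot\alpha)|_{\MF{a}}=0$, giving $\theta R_{0}\subset R_{0}$. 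I would then split $\varPhi_{0}^{i}$ into $\varPhi_{0,\MF{p}}^{i}$ and $\varPhi_{0}^{i}\setminus\varPhi_{0,\MF{p}}^{i}$. For $\alpha\in\varPhi_{0,\MF{p}}^{i}$ the condition $\alpha|_{\MF{a}_{\MF{p}}}=0$ means $\alpha$ is supported on the $(+1)$-eigenspace of $\theta$ on $\tilde{\MF{a}}_{\BS{R}}$, whence $\theta\cdot\alpha=\alpha\in R_{0}^{i}$.

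For $\alpha\in\varPhi_{0}^{i}\setminus\varPhi_{0,\MF{p}}^{i}$ I would apply Lemma \ref{lem.satake1} to the irreducible factor of $(\MF{g},\MF{k})$ containing $\alpha$. In Case (1), when $\alpha$ is disconnected from $\varPhi_{0,\MF{p}}$, one has $(-\theta)\alpha=p_{\theta}\alpha$; the $p_{\theta}$-invariance hypothesis places $p_{\theta}\alpha$ in $\varPhi_{0}^{i}\setminus\varPhi_{0,\MF{p}}^{i}\subset R_{0}^{i}$, so $\theta\cdot\alpha=-p_{\theta}\alpha\in R_{0}^{i}$. In Case (2) one has $(-\theta)\alpha=p_{\theta}\alpha+\sum_{\beta\in\langle\alpha\rangle}c_{\beta}\beta$ with $c_{\beta}\in\BS{Z}$ and $\langle\alpha\rangle\subset\varPhi_{0,\MF{p}}$; since $\varPhi_{0,\MF{p}}\subset\varPhi_{0}$ and ``connected with $\alpha$'' is taken inside the Dynkin diagram of $\varPhi$, every $\beta\in\langle\alpha\rangle$ lies in the same connected component $\varPhi_{0}^{i}$ as $\alpha$, so $(-\theta)\alpha$ is an integral combination of elements of $\varPhi_{0}^{i}$. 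As it is already known to lie in $R_{0}$, it must in fact lie in $R_{0}^{i}$, giving $\theta\cdot\alpha\in R_{0}^{i}$.

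The main point of care, and really the only obstacle, is the bookkeeping in Case (2): one must be sure that the correction term $\sum c_{\beta}\beta$ from Lemma \ref{lem.satake1}(2) stays within the component $\varPhi_{0}^{i}$, i.e.\ $\langle\alpha\rangle\subset\varPhi_{0,\MF{p}}^{i}$. This is a consequence of $\varPhi_{0,\MF{p}}\subset\varPhi_{0}$ and the fact that $\alpha$ being Dynkin-connected to $\beta\in\varPhi_{0,\MF{p}}$ places $\beta$ in the same connected component of $\varPhi_{0}$ as $\alpha$. Once this is in hand, the rest is a direct check that the generators of $R_{0}^{i}$ are mapped into $R_{0}^{i}$, yielding $\theta R_{0}^{i}=R_{0}^{i}$.
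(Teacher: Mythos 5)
Your proof is correct and follows essentially the same route as the paper's: both check $\theta$-invariance on the simple system $\varPhi_{0}^{i}$, treat $\varPhi_{0,\MF{p}}^{i}$ trivially, and handle $\varPhi_{0}^{i}\setminus\varPhi_{0,\MF{p}}^{i}$ via the two cases of Lemma \ref{lem.satake1}, observing that $\langle\alpha\rangle$ stays inside the connected component $\varPhi_{0}^{i}$. Your explicit preliminary check that $R_{0}$ itself is $\theta$-stable, and the remark that a root of $R_{0}$ supported on $\varPhi_{0}^{i}$ must lie in $R_{0}^{i}$, are points the paper leaves implicit but add nothing new in substance.
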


\begin{proof}
Since $\theta \cdot \alpha = \alpha \in \varPhi_{0,\MF{p}}^{i}$
for all $\alpha \in \varPhi^{i}_{0,\MF{p}}$,
$\varPhi_{0,\MF{p}}^{i}$ is $\theta$-invariant.
Let $\alpha$ be a root in $\varPhi_{0}^{i}$
such that $\alpha|_{\MF{a}_{\MF{p}}} \not= 0$.
It follows from Lemma \ref{lem.satake1}
that if $\alpha \in \varPhi^{i}_{0}\setminus\varPhi^{i}_{0,\MF{p}}$ is disconnected with all roots of $\varPhi_{0,\MF{p}}^{i}$,
then we have $\theta \cdot \alpha = - p_{\theta}\alpha \in R_{0}^{i}$.
In the case where $\alpha \in \varPhi^{i}_{0}\setminus\varPhi^{i}_{0,\MF{p}}$
is connected with a root of $\varPhi_{0,\MF{p}}$,
$\theta \cdot \alpha$ has the form
$-p_{\theta}\alpha + \sum_{\beta \in \langle\alpha\rangle}\BS{Z}\beta$,
and $\langle\alpha\rangle$ is contained in $\varPhi_{0}^{i}$,
where $\langle\alpha\rangle$ is the union of the connected components
of $\varPhi_{0,\MF{p}}$ connected with $\alpha$.
Hence we have $\theta \cdot \alpha \in R_{0}^{i}$.
Since $\varPhi_{0}^{i}$ is a simple root system of $R_{0}^{i}$,
$R_{0}^{i}$ is $\theta$-invariant.
\end{proof}

\noindent
By a similar argument as Lemma \ref{lem.inv1} we have the following result.

\begin{Lem}\label{lem.inv2}
If $\varPhi_{0}^{i} \setminus \varPhi_{0,\MF{q}}^{i}(\neq \emptyset)$
is $p_{\sigma}$-invariant, then $R_{0}^{i}$
is $\sigma$-invariant.
\end{Lem}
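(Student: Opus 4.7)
The plan is to mirror the argument for Lemma \ref{lem.inv1} verbatim, interchanging the roles of $(\theta, \MF{p}, \MF{a}_{\MF{p}}, \varPhi_{0,\MF{p}}, p_{\theta})$ with $(\sigma, \MF{q}, \MF{a}_{\MF{q}}, \varPhi_{0,\MF{q}}, p_{\sigma})$. The first step is to show that every $\alpha \in \varPhi^{i}_{0,\MF{q}}$ satisfies $\sigma \cdot \alpha = \alpha$, so that $\varPhi^{i}_{0,\MF{q}}$ is $\sigma$-invariant. Reading off the decomposition
\[
\tilde{\MF{a}}_{\BS{R}} = \sqrt{-1}\tilde{\MF{a}}\cap(\MF{k}\cap\MF{h}) + \MF{a}_{\MF{p}}\cap\MF{h} + \sqrt{-1}\MF{a}_{\MF{q}}\cap\MF{k} + \MF{a}
\]
from the preliminaries, the $(-1)$-eigenspace of $\sigma$ inside $\tilde{\MF{a}}_{\BS{R}}$ is $\sqrt{-1}(\MF{a}_{\MF{q}}\cap\MF{k}) + \MF{a}$, which is contained in $\MF{a}_{\MF{q}}^{\BS{C}}$; since $\alpha$ is $\BS{C}$-linear and vanishes on $\MF{a}_{\MF{q}}$, the desired identity follows.

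The second step is the analog of the main case distinction in the proof of Lemma \ref{lem.inv1}. For each $\alpha \in \varPhi^{i}_{0}\setminus\varPhi^{i}_{0,\MF{q}}$ I would invoke the analog of Lemma \ref{lem.satake1} for the Satake diagram $S(G/H,\MF{a}_{\MF{q}})$: it yields $\sigma \cdot \alpha = -p_{\sigma}\alpha$ when $\alpha$ is disconnected from every root of $\varPhi_{0,\MF{q}}$, and $\sigma \cdot \alpha \equiv -p_{\sigma}\alpha \pmod{\langle\alpha\rangle_{\BS{Z}}}$ with $\langle\alpha\rangle \subset \varPhi_{0,\MF{q}}$ otherwise. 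The hypothesis that $\varPhi^{i}_{0}\setminus\varPhi^{i}_{0,\MF{q}}$ is $p_{\sigma}$-invariant places $-p_{\sigma}\alpha$ back inside $\varPhi^{i}_{0}$, while $\langle\alpha\rangle \subset \varPhi^{i}_{0}$ because the connected components it comprises meet $R^{i}_{0}$ through $\alpha$. Combined with the first step, $\sigma$ carries the simple system $\varPhi^{i}_{0}$ into $R^{i}_{0}$, forcing $\sigma$-invariance of $R^{i}_{0}$.

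The hard part is justifying the analog of Lemma \ref{lem.satake1} for $S(G/H,\MF{a}_{\MF{q}})$, which is not literally the Satake diagram of a Riemannian symmetric space. A clean way around this is to pass to the $c$-dual pair: the real form $\MF{g}^{d} = (\MF{k}\cap\MF{h}) + \sqrt{-1}(\MF{p}\cap\MF{h}) + \sqrt{-1}(\MF{k}\cap\MF{q}) + (\MF{p}\cap\MF{q})$ introduced in the preliminaries carries a natural involution interchanging the roles of $\MF{p}$ and $\MF{q}$, for which the Riemannian-type Satake diagram built with respect to $\MF{a}_{\MF{q}}$ coincides with $S(G/H,\MF{a}_{\MF{q}})$. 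Under this identification Lemma \ref{lem.satake1} applies verbatim and supplies the congruences used in the second step.
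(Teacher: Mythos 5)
Your proposal is correct and follows exactly the route the paper intends: the paper gives no separate proof of this lemma, stating only that it follows ``by a similar argument as Lemma \ref{lem.inv1}'', and your mirrored argument (first step via the $(-1)$-eigenspace of $\sigma$ in $\tilde{\MF{a}}_{\BS{R}}$, second step via the case distinction on connectedness with $\varPhi_{0,\MF{q}}$) is that similar argument spelled out. Your justification of the $\MF{a}_{\MF{q}}$-analogue of Lemma \ref{lem.satake1} by passing to the dual form $\MF{g}^{d}$ — for which $\sigma$ becomes a Cartan involution and $S(G/H,\MF{a}_{\MF{q}})$ becomes a genuine Riemannian Satake diagram — is precisely the detail the paper leaves implicit (it is why the paper introduces $\MF{g}^{d}$ and $\tau^{d}$ with $\tau^{d}\cdot\alpha=-\sigma\cdot\alpha$), so no gap remains.
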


\begin{Lem}\label{lem.ink}
For any $\alpha \in R_{0}$ with $\alpha|_{\MF{a}_{\MF{p}}}= 0$,
$\MF{g}^{\BS{C}}_{\alpha}\subset \MF{k}^{\BS{C}}$ holds.
\end{Lem}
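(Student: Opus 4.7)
The plan is to exploit $\theta$-invariance of the root space together with the fact that $\MF{a}_{\MF{p}}$ is maximal abelian in $\MF{p}$. As the decomposition $\tilde{\MF{a}}_{\BS{R}} = \sqrt{-1}\tilde{\MF{a}}\cap(\MF{k}\cap\MF{h}) + \MF{a}_{\MF{p}}\cap\MF{h} + \sqrt{-1}\MF{a}_{\MF{q}}\cap\MF{k} + \MF{a}$ recorded in Section~\ref{Sec.pre} makes explicit, $\tilde{\MF{a}}$ is stable under both $\sigma$ and $\theta$, and $\tilde{\MF{a}}_{\BS{R}}$ splits as $\MF{a}_{\MF{p}} + \sqrt{-1}(\tilde{\MF{a}}\cap\MF{k})$ with $\theta$ acting by $-\OPE{id}$ on the first summand and by $+\OPE{id}$ on the second.

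Consequently, any real-valued linear functional on $\tilde{\MF{a}}_{\BS{R}}$ that vanishes on $\MF{a}_{\MF{p}}$ satisfies $\theta\cdot\alpha = \alpha$. Applied to our $\alpha$, this gives $\theta(\MF{g}^{\BS{C}}_{\alpha}) = \MF{g}^{\BS{C}}_{\theta\cdot\alpha} = \MF{g}^{\BS{C}}_{\alpha}$. Since $\tilde{\MF{a}}^{\BS{C}}$ is a Cartan subalgebra of $\MF{g}^{\BS{C}}$, the root space is one-dimensional, hence a single eigenline of $\theta$, so either $\MF{g}^{\BS{C}}_{\alpha}\subset\MF{k}^{\BS{C}}$ or $\MF{g}^{\BS{C}}_{\alpha}\subset\MF{p}^{\BS{C}}$.

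To rule out the latter, I would take a nonzero $X \in \MF{g}^{\BS{C}}_{\alpha}\cap\MF{p}^{\BS{C}}$; since $\alpha|_{\MF{a}_{\MF{p}}}=0$, such an $X$ would centralize $\MF{a}_{\MF{p}}$, so $X\in Z_{\MF{p}^{\BS{C}}}(\MF{a}_{\MF{p}}) = \MF{a}_{\MF{p}}^{\BS{C}} \subset \tilde{\MF{a}}^{\BS{C}}$ by the maximal abelian property of $\MF{a}_{\MF{p}}$ in $\MF{p}$. Then $[A,X]=0$ for every $A\in\tilde{\MF{a}}^{\BS{C}}$, which combined with $[A,X]=\alpha(A)X$ would force $\alpha\equiv 0$, contradicting $\alpha \in R$. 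The only step requiring any attention is the upgrade from $\alpha|_{\MF{a}_{\MF{p}}}=0$ to $\theta\cdot\alpha = \alpha$, but this is immediate from the explicit decomposition of $\tilde{\MF{a}}_{\BS{R}}$ above, so I do not anticipate any serious obstacle; note that the hypothesis $\alpha \in R_{0}$ itself is not even used in the argument.
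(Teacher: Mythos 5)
Your proof is correct and follows essentially the same route as the paper: both arguments hinge on showing $\theta\cdot\alpha=\alpha$ (so that $\MF{g}^{\BS{C}}_{\alpha}$ is $\theta$-stable) and then using the maximality of $\MF{a}_{\MF{p}}$ in $\MF{p}$ to eliminate the $\MF{p}^{\BS{C}}$-component. The only cosmetic difference is that the paper bypasses the one-dimensionality of root spaces by applying the centralizer argument directly to $X-\theta X\in\MF{p}^{\BS{C}}\cap\MF{g}^{\BS{C}}_{\alpha}$, which must lie in $\MF{a}_{\MF{p}}^{\BS{C}}\cap\MF{g}^{\BS{C}}_{\alpha}=\{0\}$; your side remark that the hypothesis $\alpha\in R_{0}$ is redundant is also accurate, since $\MF{a}\subset\MF{a}_{\MF{p}}$ makes $\alpha|_{\MF{a}_{\MF{p}}}=0$ imply $\alpha|_{\MF{a}}=0$.
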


\begin{proof}
For any $\alpha \in R_{0}$ with $\alpha|_{\MF{a}_{\MF{p}}} = 0$ ,
we have $\theta \cdot \alpha = \alpha$,
so that $\MF{g}_{\alpha}^{\BS{C}}$ is $\theta$-invariant.
Then, for any $X \in \MF{g}^{\BS{C}}_{\alpha}$,
we have $[A, X - \theta X] = 0$ for all $A \in \MF{a}_{\MF{p}}$.
By the maximality of $\MF{a}_{\MF{p}}$ in $\MF{p}$,
$X - \theta X \in \MF{a}_{\MF{p}}^{\BS{C}}$ holds.
Since $X -\theta X \in \MF{a}_{\MF{p}}^{\BS{C}}\cap\MF{g}_{\alpha}^{\BS{C}}
=\{0\}$,
we have $\theta X =X$.
Hence $\MF{g}_{\alpha}^{\BS{C}} \subset \MF{k}^{\BS{C}}$
holds.
\end{proof}

\noindent
By a similar argument as Lemma \ref{lem.ink} we have
the following fact.

\begin{Lem}\label{lem.inh}
For any $\alpha \in R_{0}$ with $\alpha|_{\MF{a}_{\MF{q}}}= 0$,
$\MF{g}^{\BS{C}}_{\alpha}\subset \MF{h}^{\BS{C}}$
holds.
\end{Lem}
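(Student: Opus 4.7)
The plan is to mirror exactly the proof of Lemma \ref{lem.ink}, swapping the roles $(\theta,\MF{k},\MF{p},\MF{a}_{\MF{p}}) \leftrightarrow (\sigma,\MF{h},\MF{q},\MF{a}_{\MF{q}})$. The key observations are formally identical: $\sigma$ plays the role of the Cartan involution, $\MF{h}$ is its $(+1)$-eigenspace, $\MF{q}$ is its $(-1)$-eigenspace, and $\MF{a}_{\MF{q}}$ is a maximal abelian subspace of $\MF{q}$. So the whole argument transfers without change in logical structure.

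First, I would observe that the hypothesis $\alpha|_{\MF{a}_{\MF{q}}}=0$ forces $\sigma\cdot\alpha=\alpha$, because $\tilde{\MF{a}}^{\BS{C}}$ decomposes as $\tilde{\MF{a}}^{\BS{C}} = \MF{a}_{\MF{q}}^{\BS{C}} \oplus (\text{part on which }\sigma\text{ is }+1)$ and $\alpha$ vanishes on the $-1$-eigenpart $\MF{a}_{\MF{q}}$. Consequently the root space $\MF{g}_{\alpha}^{\BS{C}}$ is $\sigma$-invariant, so for any $X \in \MF{g}_{\alpha}^{\BS{C}}$ the element $X-\sigma X$ again lies in $\MF{g}_{\alpha}^{\BS{C}}$, and it lies in $\MF{q}^{\BS{C}}$ by construction.

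Next I would check that $[A, X-\sigma X] = 0$ for every $A \in \MF{a}_{\MF{q}}$: since $\alpha$ and $\sigma\cdot\alpha=\alpha$ both vanish on $\MF{a}_{\MF{q}}$, both $X$ and $\sigma X$ commute with $\MF{a}_{\MF{q}}$. Invoking the maximality of $\MF{a}_{\MF{q}}$ as an abelian subspace of $\MF{q}$, its centralizer in $\MF{q}$ is itself, so $X-\sigma X \in \MF{a}_{\MF{q}}^{\BS{C}}$. But $\MF{a}_{\MF{q}}^{\BS{C}} \subset \tilde{\MF{a}}^{\BS{C}}$, a Cartan subalgebra of $\MF{g}^{\BS{C}}$, and $\tilde{\MF{a}}^{\BS{C}} \cap \MF{g}_{\alpha}^{\BS{C}} = \{0\}$ for the nonzero root $\alpha$; hence $X-\sigma X = 0$, i.e.\ $\sigma X = X$, so $X \in \MF{h}^{\BS{C}}$.

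The only step that is not a purely formal transcription is the maximality argument, namely that the centralizer in $\MF{q}$ of a maximal abelian subspace of $\MF{q}$ equals that subspace. In the Riemannian case this is automatic from semisimplicity of elements of $\MF{p}$; in the pseudo-Riemannian case one needs the chosen $\MF{a}_{\MF{q}}$ to behave analogously, which is guaranteed here by the setup in which $\MF{a}_{\MF{q}}$ is taken $\theta$-stable and sits inside the ambient Cartan subalgebra $\tilde{\MF{a}}$. Once this is in hand, the argument completes exactly as above, and this is the only place where one needs to be careful beyond mere symbol substitution.
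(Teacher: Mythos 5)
Your proposal is correct and is precisely the argument the paper intends: it states Lemma \ref{lem.inh} with the remark ``by a similar argument as Lemma \ref{lem.ink},'' i.e.\ the mirror of that proof under the substitution $(\theta,\MF{k},\MF{p},\MF{a}_{\MF{p}})\leftrightarrow(\sigma,\MF{h},\MF{q},\MF{a}_{\MF{q}})$, which is exactly what you carry out. Your one point of caution is also fine, since the maximality of $\MF{a}_{\MF{q}}$ as an abelian subspace of $\MF{q}$ already forces its centralizer in $\MF{q}$ to equal $\MF{a}_{\MF{q}}$, just as for $\MF{a}_{\MF{p}}$ in the paper's proof of Lemma \ref{lem.ink}.
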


\noindent
Note that,
for each $\alpha \in R$,
$\tau \cdot \alpha = -\theta \cdot \alpha$
and $\tau^{d} \cdot \alpha =- \sigma \cdot \alpha$ hold.
Hence it is shown that,
for any closed subsystem $\Gamma$,
$\Gamma$ is $\tau$-invariant $($resp.\,$\tau^{d}$-invariant$)$
if and only if $\Gamma$ is $\theta$-invariant $($resp.\,$\sigma$-invariant$)$.
We also have the following fact.

\begin{Lem}\label{lem.real}
Let $\alpha$ be a root in $R_{0}$.
If $\alpha$ satisfies $\alpha|_{\MF{a}_{\MF{p}}}= 0$ $($resp.\,$\alpha|_{\MF{a}_{\MF{q}}}= 0$ $)$,
then $\MF{g}_{\alpha}^{\BS{C}}+\MF{g}_{-\alpha}^{\BS{C}}$
is $\tau$-invariant $($resp.\,$\tau^{d}$-invariant$)$.
\end{Lem}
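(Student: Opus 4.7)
The plan is to reduce the $\tau$-invariance of $\MF{g}_{\alpha}^{\BS{C}} + \MF{g}_{-\alpha}^{\BS{C}}$ to the identity $\tau \cdot \alpha = -\alpha$. Since $\tau$ is an antilinear automorphism of $\MF{g}^{\BS{C}}$ preserving the root space decomposition in the sense that $\tau(\MF{g}^{\BS{C}}_{\beta}) = \MF{g}^{\BS{C}}_{\tau \cdot \beta}$ for every $\beta \in R$, once $\tau \cdot \alpha = -\alpha$ is established, $\tau$ simply swaps the two summands and their direct sum is $\tau$-stable. The relation $\tau \cdot \alpha = -\theta \cdot \alpha$ recorded just above the lemma then reduces the task further to checking $\theta \cdot \alpha = \alpha$ under the hypothesis $\alpha|_{\MF{a}_{\MF{p}}} = 0$.

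To see the latter, observe that because $\MF{a}_{\MF{p}}$ is maximal abelian in $\MF{p}$ and $\tilde{\MF{a}}$ is an abelian $\theta$-stable subspace containing $\MF{a}_{\MF{p}}$, one has $\tilde{\MF{a}} \cap \MF{p} = \MF{a}_{\MF{p}}$ (any element of $\tilde{\MF{a}} \cap \MF{p}$ commutes with $\MF{a}_{\MF{p}}$, so maximality forces it to lie in $\MF{a}_{\MF{p}}$). Hence $\tilde{\MF{a}} = \MF{a}_{\MF{p}} \oplus (\tilde{\MF{a}} \cap \MF{k})$ is the $\theta$-eigenspace decomposition, with $\theta$ acting as $-\OPE{id}$ on the first summand and as $+\OPE{id}$ on the second. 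For $A = A_{1} + A_{2}$ with $A_{1} \in \MF{a}_{\MF{p}}$ and $A_{2} \in \tilde{\MF{a}} \cap \MF{k}$, the hypothesis gives $\alpha(A_{1}) = 0$, so both $\alpha(A) = \alpha(A_{2})$ and $(\theta \cdot \alpha)(A) = \alpha(-A_{1} + A_{2}) = \alpha(A_{2})$. Thus $\theta \cdot \alpha = \alpha$ on $\tilde{\MF{a}}$, hence on $\tilde{\MF{a}}^{\BS{C}}$ by complex linearity, and the first assertion follows.

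The second assertion is obtained by running an exactly parallel argument with $\sigma$ in place of $\theta$ and $\MF{a}_{\MF{q}}$ in place of $\MF{a}_{\MF{p}}$: the maximality of $\MF{a}_{\MF{q}}$ in $\MF{q}$ yields $\tilde{\MF{a}} \cap \MF{q} = \MF{a}_{\MF{q}}$, hence the $\sigma$-eigenspace decomposition $\tilde{\MF{a}} = \MF{a}_{\MF{q}} \oplus (\tilde{\MF{a}} \cap \MF{h})$; the same two-line computation gives $\sigma \cdot \alpha = \alpha$ whenever $\alpha|_{\MF{a}_{\MF{q}}} = 0$; and the identity $\tau^{d} \cdot \alpha = -\sigma \cdot \alpha$ then produces $\tau^{d} \cdot \alpha = -\alpha$, so that $\tau^{d}$ swaps $\MF{g}^{\BS{C}}_{\alpha}$ and $\MF{g}^{\BS{C}}_{-\alpha}$. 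I do not anticipate any real obstacle here: the argument is a direct verification using the relations $\tau \cdot \alpha = -\theta \cdot \alpha$ and $\tau^{d} \cdot \alpha = -\sigma \cdot \alpha$ collected immediately before the statement, together with the eigenspace decompositions of $\tilde{\MF{a}}$ forced by the maximality of $\MF{a}_{\MF{p}}$ and $\MF{a}_{\MF{q}}$; the only point worth stating explicitly is the identification $\tilde{\MF{a}} \cap \MF{p} = \MF{a}_{\MF{p}}$ (and its analogue in $\MF{q}$), which is essentially the same observation already used implicitly in Lemma \ref{lem.ink}.
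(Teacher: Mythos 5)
Your argument is correct, and it is essentially the intended one: the paper states this lemma without proof, but the ingredients you use --- the relation $\tau\cdot\alpha=-\theta\cdot\alpha$ (resp.\ $\tau^{d}\cdot\alpha=-\sigma\cdot\alpha$) recorded just before the statement, the identity $\theta\cdot\alpha=\alpha$ for $\alpha|_{\MF{a}_{\MF{p}}}=0$ already invoked in the proof of Lemma \ref{lem.ink}, and the fact that $\tau$ maps $\MF{g}^{\BS{C}}_{\beta}$ onto $\MF{g}^{\BS{C}}_{\tau\cdot\beta}$ --- are exactly those the author sets up for this purpose. Your explicit justification of $\tilde{\MF{a}}\cap\MF{p}=\MF{a}_{\MF{p}}$ (and its $\MF{q}$-analogue) via maximality is a welcome addition rather than a deviation.
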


\noindent
In the sequel, we shall determine
the structures of the irreducible factors
of $\MF{z}^{s}_{\MF{g}}$ and 
associated with $\varPhi^{i}_{0}$
and these $\MF{h}$-parts.

\medskip

\paragraph{Case 1} It is clear that $R_{0}^{i}$
is invariant under $\sigma$ and $\theta$.
It follows from Lemma \ref{lem.ink} and Lemma \ref{lem.inh} that
$\MF{z}(R_{0}^{i})$ is a subalgebra of $\MF{k}^{\BS{C}}\cap\MF{h}^{\BS{C}}$.
Set $\MF{z}_{\MF{g}}^{i}:=\MF{z}(R_{0}^{i})\cap \MF{g}$,
which is a real form of $\MF{z}(R_{0}^{i})$ by using Lemma \ref{lem.real}.
Moreover, we have
$\MF{z}^{i}_{\MF{g}} =\MF{z}^{i}_{\MF{g}}\cap \MF{h} \subset \MF{k}\cap\MF{h}$.
In particular, $\MF{z}^{i}_{\MF{g}}$
is a compact real form of $\MF{z}(R_{0}^{i})$,
which is uniquely determined (up to isomorphism) by the Dynkin
diagram of $\varPhi_{0}^{i}$.

\medskip

\paragraph{Case 2} It follows from Lemma \ref{lem.inv1}
that $R_{0}^{i}$ is $\theta$-invariant.
This implies that $\MF{z}(R_{0}^{i})$
is $\tau$-invariant.
By using Lemma \ref{lem.inh}, we have
$\MF{z}_{\MF{g}}^{i}:=\MF{z}(R_{0}^{i})\cap\MF{g} \subset \MF{h}$.
Moreover, $\theta|_{\MF{z}^{i}_{\MF{g}}}$
is a Cartan involution of $\MF{z}^{i}_{\MF{g}}$.
Then $(\MF{z}^{i}_{\MF{g}},\MF{z}^{i}_{\MF{g}}\cap\MF{k})$
is an irreducible Riemannian symmetric pair (of noncompact-type).
Moreover, its Satake diagram is given by
the Dynkin diagram of $\varPhi_{0}^{i}$
and $p_{\theta}|_{\varPhi^{i}_{0}\setminus \varPhi^{i}_{0,\MF{p}}}$.

\medskip

\paragraph{Case 3}
By using Lemma \ref{lem.ink},
we have $\MF{z}(R_{0}^{i}) \subset \MF{k}^{\BS{C}}$.
Then
$\MF{z}^{i}_{\MF{g}}:=\MF{z}(R_{0}^{i})
\cap\MF{g} \subset \MF{k}$
and $\MF{z}^{i}_{\MF{g}}\cap\MF{h} \subset \MF{k}\cap\MF{h}$ hold.
Note that $\sigma|_{\MF{z}^{i}_{\MF{g}}}$ is not trivial.
Then $(\MF{z}^{i}_{\MF{g}}, \MF{z}^{i}_{\MF{g}}\cap\MF{h})$
is an irreducible Riemannian symmetric pair (of compact-type).
Moreover, its Satake diagram is given by 
the Dynkin diagram of $\varPhi_{0}^{i}$
and $p_{\sigma}|_{\varPhi^{i}_{0}\setminus \varPhi^{i}_{0,\MF{q}}}$.

\medskip

\paragraph{Case 4}
In this case, $\MF{z}^{i}_{\MF{g}}:=\MF{z}(R_{0}^{i})\cap\MF{g}$
is a noncompact subalgebra of $\MF{g}$,
and $(\MF{z}^{i}_{\MF{g}},\MF{z}^{i}_{\MF{g}}\cap\MF{h})$
is an irreducible semisimple symmetric pair.
Since
$\MF{z}_{\MF{g}}\cap(\MF{p}\cap\MF{q})=\MF{a} \subset \MF{z}_{\MF{g}}^{c}$
holds,
we have $\MF{z}^{i}_{\MF{g}}\cap(\MF{p}\cap\MF{q})\subset\MF{z}^{s}_{\MF{g}}\cap(\MF{p}\cap\MF{q}) = \{0\}$.
This contradicts the fact that any noncompact irreducible
semisimple symmetric pair has the split rank greater than or
equal to one.
Hence Case 4 cannot occur.

\medskip

\paragraph{Case 5}
Denote by $\tilde{R}^{i}_{0}$
is the smallest $\theta$-invariant closed subsystem of $R_{0}$
containing $R^{i}_{0}$.
Then $\tilde{R}^{i}_{0}$ is not connected,
and $\MF{z}(\tilde{R}^{i}_{0})$
is $\tau$-invariant.
Set $\tilde{\MF{z}}^{i}_{\MF{g}}:=\MF{z}(\tilde{R}_{0})\cap\MF{g}$.
Since $\theta|_{\tilde{\MF{z}}^{i}_{\MF{g}}}$
is not trivial,
$(\tilde{\MF{z}}^{i}_{\MF{g}}, \tilde{\MF{z}}^{i}_{\MF{g}}\cap\MF{k})$
is an irreducible Riemannian symmetric pair.
By the classification of irreducible Riemannian symmetric pairs,
$(\tilde{\MF{z}}^{i}_{\MF{g}}, \tilde{\MF{z}}^{i}_{\MF{g}}\cap\MF{k})$
is isomorphic to a Riemannian symmetric pair
$(\MF{l}^{\BS{C}},\MF{\MF{l}})$ (of compact real form type),
where $\MF{l}$ is a simple compact Lie algebra.
Therefore we have
$\varPhi^{i}_{0,\MF{p}} = \emptyset$.
Moreover, the Dynkin diagram of $\MF{l}$ is equal to $\varPhi^{i}_{0}$.
It follows from Lemma 2.8 in \cite{MR810638}
that $\varPhi^{i}_{0} = \varPhi^{i}_{0,\MF{q}}$ holds.
Hence $\tilde{\MF{z}}^{i}_{\MF{g}}$ is contained in $\MF{h}$.

\medskip

\paragraph{Case 6}
Denote by $\hat{R}^{i}_{0}$
the smallest $\sigma$-invariant closed subsystem of $R_{0}$
containing $R^{i}_{0}$, which is not connected.
Then $\MF{z}(\hat{R}^{i}_{0})$
is $\tau^{d}$-invariant.
Set $\hat{\MF{z}}^{i}_{\MF{g}^{d}}:=\MF{z}(\hat{R}^{i}_{0})\cap\MF{g}^{d}$.
Since $\sigma|_{\hat{\MF{z}}^{i}_{\MF{g}^{d}}}$
is not trivial,
$(\hat{\MF{z}}^{i}_{\MF{g}^{d}}, \hat{\MF{z}}^{i}_{\MF{g}^{d}}\cap\MF{k}^{d})$
is isomorphic to a Riemannian symmetric pair $(\MF{m}^{\BS{C}},\MF{m})$ (of
compact real form type),
where $\MF{m}$ is a simple compact Lie algebra.
Note that the Dynkin diagram of $\MF{m}$ is equal to $\varPhi^{i}_{0}$.
Then we have $\varPhi^{i}_{0,\MF{q}}=\emptyset$.
It follows from Lemma 2.8 in \cite{MR810638}
that $\varPhi^{i}_{0} = \varPhi^{i}_{0,\MF{p}}$ holds.
Hence $\MF{z}(\hat{R}^{i}_{0})$ is $\tau$-invariant,
and $\hat{\MF{z}}^{i}_{\MF{g}}:=\MF{z}(\hat{R}^{i}_{0})\cap\MF{g}$
is subalgebra of $\MF{k}$.
Then $(\hat{\MF{z}}^{i}_{\MF{g}},
\hat{\MF{z}}^{i}_{\MF{g}}\cap\MF{h})$
is isomorphic to $(\MF{m}+\MF{m},\MF{m})$.

\medskip

\noindent
Here, we give a recipe to determine
the hyperbolic principal isotropy subalgebra
as follows.

\begin{Recipe}[hyperbolic principal isotropy subalgebras]\label{recipe.hprin}
 
Let $(\MF{g},\MF{h})$ be an irreducible semisimple symmetric pair.

\begin{enumerate}[Step 1.]

\item We calculate all the irreducible components $\varPhi^{i}_{0}$
of $\varPhi_{0}$ by using the Satake diagram $S(\MF{g},\MF{h},\MF{a})$.

\item For each $i$, we investigate whether $\varPhi^{i}_{0}$
corresponds to either Case 1--3, 5 or 6 by using the Satake diagrams
$S(\MF{g},\MF{h},\MF{a}_{\MF{q}})$ and $S(\MF{g},\MF{k},\MF{a}_{\MF{p}})$.

\item For each $i$, we determine
the following subalgebra of $\MF{z}^{s}_{\MF{g}}$
associated with $\varPhi^{i}_{0}$.

\begin{enumerate}[C{a}se 1:]
\item We determine $\MF{z}^{i}_{\MF{g}} (\subset \MF{h})$
by investigating the Dynkin diagram of $\varPhi^{i}_{0}$.

\item We determine $(\MF{z}^{i}_{\MF{g}},\MF{z}^{i}_{\MF{g}}\cap\MF{k})$
by investigating the Satake diagram obtained from
the Dynkin diagram of $\varPhi^{i}_{0}$ and
$p_{\theta}|_{\varPhi^{i}_{0}\setminus\varPhi^{i}_{0,\MF{p}}}$.
Note that $\MF{z}^{i}_{\MF{g}}$ is contained in $\MF{h}$.

\item We determine $(\MF{z}^{i}_{\MF{g}},\MF{z}^{i}_{\MF{g}}\cap\MF{h})$
by investigating the Satake diagram obtained from
the Dynkin diagram of $\varPhi^{i}_{0}$ and
$p_{\sigma}|_{\varPhi^{i}_{0}\setminus\varPhi^{i}_{0,\MF{q}}}$.

\setcounter{enumii}{4}

\item We calculate
$\varPhi^{i}_{0}\cup p_{\theta}\varPhi^{i}_{0}(=:\tilde{\varPhi}^{i}_{0})$
by using the Satake diagrams $S(\MF{g},\MF{h},\MF{a})$
and $S(\MF{g},\MF{k},\MF{a}_{\MF{p}})$.
We determine
$(\tilde{\MF{z}}^{i}_{\MF{g}},\tilde{\MF{z}}^{i}_{\MF{g}}\cap\MF{k})$
by investigating the Satake diagram obtained from the Dynkin
diagram of $\tilde{\varPhi}^{i}_{0}$ and
$p_{\theta}|_{\tilde{\varPhi}^{i}_{0}}$.
Note that $\tilde{\MF{z}}^{i}_{\MF{g}}$ is contained in $\MF{h}$.

\item We calculate $\varPhi^{i}_{0}\cup p_{\sigma}\varPhi^{i}_{0}(=:\hat{\varPhi}^{i}_{0})$ by using the Satake diagrams $S(\MF{g},\MF{h},\MF{a})$
and $S(\MF{g},\MF{h},\MF{a}_{\MF{q}})$.
We determine $(\hat{\MF{z}}^{i}_{\MF{g}},\hat{\MF{z}}_{\MF{g}}\cap\MF{h})$
by investigating the Satake diagram obtained from the Dynkin
diagram of $\hat{\varPhi}^{i}_{0}$ and $p_{\sigma}|_{\hat{\varPhi}^{i}_{0}}$.
\end{enumerate}
\item We calculate the following dimensions.
\begin{align*}
\dim \tilde{\MF{a}}\cap(\MF{k}\cap\MF{h})
&= \OPE{rank}\MF{g}^{\BS{C}} - \OPE{rank}(\MF{g},\MF{h})
- \OPE{rank}(\MF{g},\MF{k}) +\OPE{s-rank}(\MF{g},\MF{h}),\\
\dim \tilde{\MF{a}}^{c}\cap(\MF{k}\cap\MF{h})
&= \dim \tilde{\MF{a}}\cap(\MF{k}\cap\MF{h})
-\dim \tilde{\MF{a}}^{s}\cap(\MF{k}\cap\MF{h}),\\
\dim \tilde{\MF{a}}^{c}\cap(\MF{p}\cap\MF{h})
&= \dim \tilde{\MF{a}}\cap(\MF{p}\cap\MF{h})
-\dim \tilde{\MF{a}}^{s}\cap(\MF{p}\cap\MF{h}).
\end{align*}
\item From the data in Steps 1--4, we determine $\MF{z}_{\MF{h}}$.

\end{enumerate}
\end{Recipe}

\begin{Ex}[$(\MF{g},\MF{h})=(\MF{su}(2p, 2(n-p)),\MF{sp}(p,n-p))$]\label{exam.1}
First,
we calculate $\MF{z}_{\MF{h}}$ in the case of $n>2p$.
We give the Satake diagrams associated with $(\MF{g},\MF{h})$
with respect to $\MF{a}, \MF{a}_{\MF{p}}$ and $\MF{a}_{\MF{q}}$,
respectively (see Table \ref{table.satake1}).

\begin{table}[htbp]
\centering
\caption{$(\MF{g},\MF{h})=(\MF{su}(2p, 2(n-p)),\MF{sp}(p,n-p)) (n>2p)$}\label{table.satake1}
\begin{tabular}{|c|c|}
\hline
the Satake diagram $S(\MF{g},\MF{h},\MF{a})$
&
the Satake diagram $S(\MF{g},\MF{h},\MF{a}_{\MF{p}})$
\\
\hline
\begin{xy}
\tiny
\ar@{-}(0,5) *+!D{\alpha_{1}}*{\bullet};(7,5) *+!D{\alpha_{2}}*{\circ}="R2"
\ar@{-}"R2";(14,5)*+!D{\alpha_{3}}*{\bullet}="R3"
\ar@{-}"R3";(17,5)
\ar@{.}(18,5);(19,5)
\ar@{-}(20,5);(23,5)*+!D{\alpha_{2p}}*{\circ}="R2p"
\ar@{-}"R2p";(30,5)*+!L{\alpha_{2p+1}}*{\bullet}="R2p+1"
\ar@{-}(0,-5) *+!U{\alpha_{2n-1}}*{\bullet};(7,-5) *++!U{\alpha_{2n-2}}*{\circ}="R2n-2"
\ar@{-}"R2n-2";(14,-5)*+!U{\alpha_{2n-3}}*{\bullet}="R2n-3"
\ar@{-}"R2n-3";(17,-5)
\ar@{.}(18,-5);(19,-5)
\ar@{-}(20,-5);(23,-5)*++!U{\alpha_{2n-2p}}*{\circ}="R2n-2p"
\ar@{-}"R2n-2p";(30,-5)*+!L{\alpha_{2n-2p-1}}*{\bullet}="R2n-2p-1"
\ar@{-}"R2p+1";(30,2)*+!L{\alpha_{2p+2}}*{\bullet}="R2p+2"
\ar@{-}"R2p+2";(30,0)
\ar@{.}(30,-1);(30,-2)
\ar@{-}(30,-3);"R2n-2p-1"
\ar@/_/ @{<->}"R2";"R2n-2"
\ar@/_/ @{<->}"R2p";"R2n-2p"
\end{xy}

&

\begin{xy}
\tiny
\ar@{-}(0,5) *+!D{\alpha_{1}}*{\circ}="R1";(7,5) *+!D{\alpha_{2}}*{\circ}="R2"
\ar@{-}"R2";(14,5)*+!D{\alpha_{3}}*{\circ}="R3"
\ar@{-}"R3";(17,5)
\ar@{.}(18,5);(19,5)
\ar@{-}(20,5);(23,5)*+!D{\alpha_{2p}}*{\circ}="R2p"
\ar@{-}"R2p";(30,5)*+!L{\alpha_{2p+1}}*{\bullet}="R2p+1"
\ar@{-}(0,-5) *+!U{\alpha_{2n-1}}*{\circ}="R2n-1";(7,-5) *++!U{\alpha_{2n-2}}*{\circ}="R2n-2"
\ar@{-}"R2n-2";(14,-5)*+!U{\alpha_{2n-3}}*{\circ}="R2n-3"
\ar@{-}"R2n-3";(17,-5)
\ar@{.}(18,-5);(19,-5)
\ar@{-}(20,-5);(23,-5)*++!U{\alpha_{2n-2p}}*{\circ}="R2n-2p"
\ar@{-}"R2n-2p";(30,-5)*+!L{\alpha_{2n-2p-1}}*{\bullet}="R2n-2p-1"
\ar@{-}"R2p+1";(30,2)*+!L{\alpha_{2p+2}}*{\bullet}="R2p+2"
\ar@{-}"R2p+2";(30,0)
\ar@{.}(30,-1);(30,-2)
\ar@{-}(30,-3);"R2n-2p-1"
\ar@/_/ @{<->}"R1";"R2n-1"
\ar@/_/ @{<->}"R2";"R2n-2"
\ar@/_/ @{<->}"R3";"R2n-3"
\ar@/_/ @{<->}"R2p";"R2n-2p"
\end{xy}
\\
\hline
\multicolumn{2}{|c|}{the Satake diagram
 $S(\MF{g},\MF{h},\MF{a}_{\MF{q}})$}\\
\hline
\multicolumn{2}{|c|}{
\begin{xy}
\tiny
\ar@{-}(0,5) *+!D{\alpha_{1}}*{\bullet};(7,5) *+!D{\alpha_{2}}*{\circ}="R2"
\ar@{-}"R2";(14,5)*+!D{\alpha_{3}}*{\bullet}="R3"
\ar@{-}"R3";(17,5)
\ar@{.}(18,5);(19,5)
\ar@{-}(20,5);(23,5)*+!D{\alpha_{2p}}*{\circ}="R2p"
\ar@{-}"R2p";(30,5)*+!L{\alpha_{2p+1}}*{\bullet}="R2p+1"
\ar@{-}(0,-5) *+!U{\alpha_{2n-1}}*{\bullet};(7,-5) *++!U{\alpha_{2n-2}}*{\circ}="R2n-2"
\ar@{-}"R2n-2";(14,-5)*+!U{\alpha_{2n-3}}*{\bullet}="R2n-3"
\ar@{-}"R2n-3";(17,-5)
\ar@{.}(18,-5);(19,-5)
\ar@{-}(20,-5);(23,-5)*++!U{\alpha_{2n-2p}}*{\circ}="R2n-2p"
\ar@{-}"R2n-2p";(30,-5)*+!L{\alpha_{2n-2p-1}}*{\bullet}="R2n-2p-1"
\ar@{-}"R2p+1";(30,2)*+!L{\alpha_{2p+2}}*{\circ}="R2p+2"
\ar@{-}"R2p+2";(30,0)
\ar@{.}(30,-1);(30,-2)
\ar@{-}(30,-3);"R2n-2p-1"
\end{xy}
}\\
\hline
\end{tabular}
\end{table}

\begin{enumerate}[Step 1.]
\item Set $\varPhi^{i}_{0}:=\{\alpha_{2i-1}\},\,\varPhi^{p+i}_{0}:=\{\alpha_{2n-(2i-1)}\}$ for $1\leq i \leq p$,
and $\varPhi^{2p+1}_{0} :=\{\alpha_{2p+1},\ldots,\alpha_{2n-(2p+1)}\}$.
From $S(\MF{g},\MF{h},\MF{a})$,
$\varPhi^{j}_{0}$ is an irreducible component of $\varPhi_{0}$
for $1 \leq j \leq 2p+1$.

\item It follows from $S(\MF{g}, \MF{h}, \MF{a}_{\MF{p}})$
and $S(\MF{g}, \MF{h}, \MF{a}_{\MF{q}})$ that
$\varPhi^{i}_{0}$ and $\varPhi^{p+i}_{0}$ correspond to Case 5,
and
$\varPhi^{2p+1}_{0}$ corresponds to Case 3.
We obtain
$\tilde{\varPhi}^{i}_{0}=\varPhi^{i}_{0}\cup p_{\theta}\varPhi^{i}_{0}=\varPhi^{i}_{0}\cup\varPhi^{p+i}_{0}$ for $1 \leq i \leq p$.

\item For $1 \leq i \leq p$,
 $(\tilde{\MF{z}}^{i}_{\MF{g}}, \tilde{\MF{z}}^{i}_{\MF{g}}\cap\MF{k})$
is isomorphic to $(\MF{sl}(2,\BS{C}),\MF{su}(2))$
because their Satake diagram are ``$\circ \leftrightarrow \circ$''.
We also have $(\tilde{\MF{z}}^{2p+1}_{\MF{g}}, \tilde{\MF{z}}^{2p+1}_{\MF{g}}\cap\MF{h})\cong(\MF{su}(2(n-2p)),\MF{sp}(n-2p))$.

\item We obtain $\dim \tilde{\MF{a}}\cap(\MF{k}\cap\MF{h})=0,\,
\dim \tilde{\MF{a}}\cap(\MF{p}\cap\MF{h})=0$.
Here, we have
$\OPE{rank}\MF{g}^{\BS{C}} = 2n-1$,
$\OPE{s-rank}(\MF{g},\MF{h})=p$,
$\OPE{rank}(\MF{g},\MF{h})=n-1$,
$\OPE{rank}(\MF{g},\MF{k})=2p$,
$\dim \tilde{\MF{a}}^{s}\cap(\MF{k}\cap\MF{h})=n-p$, and
$\dim \tilde{\MF{a}}^{s}\cap(\MF{p}\cap\MF{h})=p$.
\item 
It follows from Step 1--4 that
$\MF{z}_{\MF{h}}$
is isomorphic to
$\MF{sl}(2,\BS{C})^{p}+\MF{sp}(n-2p) (n > 2p)$.
In the case of $n=2p$,
we have $\MF{z}_{\MF{h}}\cong\MF{sl}(2,\BS{C})^{p}$,
by the similar calculation.
Hence we have
$\MF{z}_{\MF{h}}\cong\MF{sl}(2, \BS{C})^{p}+\MF{sp}(n-2p) (n
\geq 2p)$.
Note that we have
$\MF{sl}(2,\BS{C})\cong\MF{so}(3,1)\subset \MF{so}(4,1)\cong \MF{sp}(1,1)$
by using the list of special isomorphisms (see,\,Section 4 of Chapter X in \cite{MR1834454}).
\end{enumerate}
\end{Ex}

\begin{Ex}[$(\MF{g}, \MF{h})=(\MF{sl}(n,\BS{C}),\MF{sl}(n,\BS{R}))$]\label{exam.2}

First,
we determine $\MF{z}_{\MF{h}}$ in the case of $n=2m$.
We give the Satake diagrams associated with $(\MF{g},\MF{h})$
with respect to $\MF{a}, \MF{a}_{\MF{p}}$ and $\MF{a}_{\MF{q}}$,
respectively (see Table \ref{table.satake2}).

\begin{table}[htbp]
\centering
\caption{$(\MF{g},\MF{h})=(\MF{sl}(2m,\BS{C}),\MF{sl}(2m,\BS{R}))$}\label{table.satake2}
\begin{tabular}{|c|c|}
\hline
\multirow{2}{*}{the Satake diagram $S(\MF{g},\MF{h},\MF{a})$}
&
the Satake diagrams $S(\MF{g},\MF{h},\MF{a}_{\MF{p}})$
\\
&the Satake diagram $S(\MF{g},\MF{h},\MF{a}_{\MF{q}})$\\
\hline
\begin{xy}
\tiny
\ar@{-}(0,10.5)*+!D{\alpha_{1}}*{\circ}="R1";(7,10.5)*+!D{\alpha_{2}}*{\circ}="R2"\ar@{-}"R2";(10,10.5)
\ar@{.}(11,10.5);(12,10.5)
\ar@{-}(13,10.5);(16,10.5)*+!D{\alpha_{m-1}}*{\circ}="Rm-1"
\ar@{-}"Rm-1";(20,7)*+!L{~\alpha_{m}}*{\circ}="Rm"
\ar@{-}(0,3.5)*{\circ}="R2m-1";(7,3.5)*{\circ}="R2m-2"\ar@{-}"R2m-2";(10,3.5)
\ar@{.}(11,3.5);(12,3.5)
\ar@{-}(13,3.5);(16,3.5)*{\circ}="Rm+1"
\ar@{-}"Rm";"Rm+1"

\ar@{-}(0,-10.5)*+!U{}*{\circ}="RR1";(7,-10.5)*{\circ}="RR2"\ar@{-}"RR2";(10,-10.5)
\ar@{.}(11,-10.5);(12,-10.5)
\ar@{-}(13,-10.5);(16,-10.5)*{\circ}="RRm-1"
\ar@{-}"RRm-1";(20,-7)*{\circ}="RRm"
\ar@{-}(0,-3.5)*{\circ}="RR2m-1";(7,-3.5)*{\circ}="RR2m-2"\ar@{-}"RR2m-2";(10,-3.5)
\ar@{.}(11,-3.5);(12,-3.5)
\ar@{-}(13,-3.5);(16,-3.5)*{\circ}="RRm+1"
\ar@{-}"RRm";"RRm+1"

\ar@/_/ @{<->}"R1";"R2m-1"
\ar@/_/ @{<->}"R2";"R2m-2"
\ar@/_/ @{<->}"Rm-1";"Rm+1"

\ar@/^/ @{<->}"RR1";"RR2m-1"
\ar@/^/ @{<->}"RR2";"RR2m-2"
\ar@/^/ @{<->}"RRm-1";"RRm+1"

\ar@/_/ @{<->}"R2m-1";"RR2m-1"
\ar@/_/ @{<->}"R2m-2";"RR2m-2"
\ar@/_/ @{<->}"Rm+1";"RRm+1"

\ar@/^/ @{<->}"Rm";"RRm"

\end{xy}

&

\begin{xy}
\tiny
\ar@{-}(0,10.5)*+!D{\alpha_{1}}*{\circ}="R1";(7,10.5)*+!D{\alpha_{2}}*{\circ}="R2"\ar@{-}"R2";(10,10.5)
\ar@{.}(11,10.5);(12,10.5)
\ar@{-}(13,10.5);(16,10.5)*+!D{\alpha_{m-1}}*{\circ}="Rm-1"
\ar@{-}"Rm-1";(20,7)*+!L{~\alpha_{m}}*{\circ}="Rm"
\ar@{-}(0,3.5)*{\circ}="R2m-1";(7,3.5)*{\circ}="R2m-2"\ar@{-}"R2m-2";(10,3.5)
\ar@{.}(11,3.5);(12,3.5)
\ar@{-}(13,3.5);(16,3.5)*{\circ}="Rm+1"
\ar@{-}"Rm";"Rm+1"

\ar@{-}(0,-10.5)*{\circ}="RR1";(7,-10.5)*{\circ}="RR2"\ar@{-}"RR2";(10,-10.5)
\ar@{.}(11,-10.5);(12,-10.5)
\ar@{-}(13,-10.5);(16,-10.5)*{\circ}="RRm-1"
\ar@{-}"RRm-1";(20,-7)*{\circ}="RRm"
\ar@{-}(0,-3.5)*{\circ}="RR2m-1";(7,-3.5)*{\circ}="RR2m-2"\ar@{-}"RR2m-2";(10,-3.5)
\ar@{.}(11,-3.5);(12,-3.5)
\ar@{-}(13,-3.5);(16,-3.5)*{\circ}="RRm+1"
\ar@{-}"RRm";"RRm+1"

\ar@/_/ @{<->}"R1";"RR2m-1"
\ar@/_/ @{<->}"R2";"RR2m-2"
\ar@/_/ @{<->}"Rm-1";"RRm+1"

\ar@/^/ @{<->}"RR1";"R2m-1"
\ar@/^/ @{<->}"RR2";"R2m-2"
\ar@/^/ @{<->}"RRm-1";"Rm+1"

\ar@/^/ @{<->}"Rm";"RRm"

\end{xy}
\\
\hline
\end{tabular}
\end{table}

\begin{enumerate}[Step 1.]
\item From $S(\MF{g},\MF{h},\MF{a})$
we have $\varPhi_{0}=\emptyset$.

\item It is clear that $\varPhi_{0}$ corresponds to Case 1.

\item It is clear that $\MF{z}^{s}_{\MF{g}}\cap\MF{h}=\{0\}$.

\item We have $\dim \tilde{\MF{a}}^{c}\cap(\MF{k}\cap\MF{h})=m$ and
$\dim \tilde{\MF{a}}^{c}\cap(\MF{p}\cap\MF{h})=m-1$
by using $\OPE{rank}\MF{g}^{\BS{C}} = 2(2m-1)$,
$\OPE{s-rank}(\MF{g},\MF{h})=m$,
$\OPE{rank}(\MF{g},\MF{h})=2m-1$, and
$\OPE{rank}(\MF{g},\MF{k})=2m-1$.
\item
It follows from Step 1--4
that $\MF{z}_{\MF{h}}$
is isomorphic to
$\BS{R}^{m-1}+\MF{so}(2)^{m}\,(n=2m)$.
In the case of $n=2m+1$,
we have $\MF{z}_{\MF{h}}\cong\BS{R}^{m}+\MF{so}(2)^{m}$,
by a similar calculation as above.
Hence we have
$\MF{z}_{\MF{h}}\cong\BS{R}^{[(n-1)/2]}+\MF{so}(2)^{[n/2]}$,
where $[(n-1)/2]$ (resp.~$[n/2]$) is
the greatest integer less than or equal to $(n-1)/2$
(resp.\,$n/2$).
\end{enumerate}
\end{Ex}

\section{Determination of local orbit types}\label{Sec.det}

Let $(\MF{g}, \MF{h})$
be a semisimple symmetric pair and $\sigma$ be an involution of $\MF{g}$
with $\MF{h}=\OPE{Ker}(\sigma - \OPE{id})$.
Suppose that $\theta$
is a Cartan involution of $\MF{g}$
commuting with $\sigma$,
and $\MF{a}$ is maximal abelian subspace of $\MF{p}\cap\MF{q}$.
Denote by $\Delta$ the restricted root system of $(\MF{g},\MF{h})$
with respect to $\MF{a}$.
Set $\Delta^{a}:=\{\lambda \in \Delta\mid \MF{g}_{\lambda} \cap \MF{h}^{a}
\neq \{0\}\}$.
Denote by $\MC{W}(\Delta)$ (resp.~$\MC{W}(\Delta^{a})$)
the Weyl group of $\Delta$
(resp.~$\Delta^{a}:=\{\lambda \in \Delta\mid m^{+}(\lambda) >0\}$).
Let $w_{1}, \ldots, w_{l}$
be a complete system of representatives
for $\MC{W}(\Delta)/\MC{W}(\Delta^{a})$.
From Theorem \ref{thm.main},
we can determine the set of all local orbit types of the
hyperbolic orbits by investigating
$\big\{[\MF{h}_{\Theta}]\mid\Theta \subset w_{i} \cdot \varPsi\big\}$
for all $i \in \{1,\ldots,l\}$.
By using the recipe in \cite{B},
we can determine $\big\{[\MF{h}_{\Theta}]\mid\Theta \subset w_{i} \cdot \varPsi\big\}$ for each $i \in \{1,\ldots,l\}$.
In fact,
we can determine $\MF{h}_{\Theta}$ for each
$\Theta \subset w_{i} \cdot \varPsi$,
by using
the hyperbolic principal isotropy subalgebra and
a subsymmetric pair of $(\MF{g}, \MF{h})$
associated with
$\Delta_{\Theta}(:=\Delta\cap\sum_{\lambda \in \Theta}\BS{R}\lambda)$
(see, page 315 in \cite{B} for more detail).

\begin{Ex}[$(\MF{g},\MF{h})=(\MF{sl}(4,\BS{R}),\MF{so}(2,2))$]\label{ex.isotro1}
The Dynkin diagram of the restricted root system of $(\MF{g},\MF{h})$
is the following.

\begin{center}
\small
\begin{tabular}{cc}
\begin{xy}
\ar@{-}(0,5) *+!D{\lambda_{1}}*{\circ};(7,5) *+!D{\lambda_{2}}*{\circ}="R2"
\ar@{-}"R2";(14,5)*+!D{\lambda_{3}}*{\circ}
\end{xy}&
$\begin{pmatrix}
m^{+}(\lambda_{i}) & m^{+}(2\lambda_{i}) \\
m^{-}(\lambda_{i}) & m^{-}(2\lambda_{i})
\end{pmatrix}=
\begin{cases}
 \begin{pmatrix}
1 & 0\\
0 & 0 
\end{pmatrix} & (i=1,3)\\
\begin{pmatrix}
0 & 0\\
1 & 0 
\end{pmatrix} & (i=2)
\end{cases}$
\end{tabular}
\end{center}

\noindent
Set $\varPsi:=\{\lambda_{1},\lambda_{2},\lambda_{3}\}
=\{e_{1}-e_{2},e_{2}-e_{3},e_{3}-e_{4}\}$.
It follows from Table \ref{table.hprin}
that the HPIS is equal to $\{0\}$.
Moreover, from Example \ref{ex.weyl1}
we have
\[
\MC{W}(\Delta)/\MC{W}(\Delta^{a})= \{\OPE{id},s_{2},s_{1}s_{2},s_{3}s_{2},s_{1}s_{3}s_{2},s_{2}s_{1}s_{3}s_{2}\}.
\]
By using Theorem \ref{thm.main} all the possible isotropy subalgebras of the
hyperbolic orbits for the s-representation associated with $(\MF{g},\MF{h})$
are equal to $\MF{h}_{\Theta}$'s for all $\Theta \subset w\cdot\varPsi, w \in \MC{W}(\Delta)/\MC{W}(\Delta^{a})$.
For each $w \in \MC{W}(\Delta)/\MC{W}(\Delta^{a})$,
we can determine $\{[\MF{h}_{\Theta}]\mid \Theta \subset w\cdot\varPsi\}$
by using the recipe in \cite{B}.
In Table \ref{table.ex1},
we shall give the lists of
all the possible isotropy subalgebras of the hyperbolic
orbits for the s-representation associated with $(\MF{g}, \MF{h})$.
\end{Ex}

\clearpage

\begin{table}[h]
\footnotesize
\centering
\caption{All the possible isotropy subalgebras for $(\MF{sl}(4,\BS{R}),\MF{so}(2,2))$}\label{table.ex1}
\begin{tabular}{|c|c|}
\hline
$\Theta(\subset \varPsi)$ & $\MF{h}_{\Theta}$ \\
\hline
\hline
$\varPsi$ & $\MF{so}(2,2)$\\
$\{e_{2}-e_{3},e_{3}-e_{4}\}$ & $\MF{so}(1,2)$\\
$\{e_{1}-e_{2},e_{3}-e_{4}\}$ & $\MF{so}(2,0)+\MF{so}(0,2)$\\
$\{e_{1}-e_{2},e_{2}-e_{3}\}$ & $\MF{so}(2,1)$\\
$\{e_{3}-e_{4}\}$ & $\MF{so}(0,2)$\\
$\{e_{2}-e_{3}\}$ & $\MF{so}(1,1)$\\
$\{e_{1}-e_{2}\}$ & $\MF{so}(2,0)$\\
$\emptyset$ & $\{0\}$\\
\hline
\end{tabular}
\begin{tabular}{|c|c|}
\hline
$\Theta(\subset s_{2}\varPsi)$ & $\MF{h}_{\Theta}$ \\
\hline
\hline
$s_{2}\varPsi$ & $\MF{so}(2,2)$\\
$\{e_{3}-e_{2},e_{2}-e_{4}\}$ & $\MF{so}(1,2)$\\
$\{e_{1}-e_{3},e_{2}-e_{4}\}$ & $\MF{so}(1,1)+\MF{so}(1,1)$\\
$\{e_{1}-e_{3},e_{3}-e_{2}\}$ & $\MF{so}(2,1)$\\
$\{e_{2}-e_{4}\}$ & $\MF{so}(1,1)$\\
$\{e_{3}-e_{2}\}$ & $\MF{so}(1,1)$\\
$\{e_{1}-e_{3}\}$ & $\MF{so}(1,1)$\\
$\emptyset$ & $\{0\}$\\
\hline
\end{tabular}

\medskip

\begin{tabular}{|c|c|}
\hline
$\Theta(\subset s_{1}s_{2}\varPsi)$ & $\MF{h}_{\Theta}$ \\
\hline
\hline
$s_{1}s_{2}\varPsi$ & $\MF{so}(2,2)$\\
$\{e_{3}-e_{1},e_{1}-e_{4}\}$ & $\MF{so}(1,2)$\\
$\{e_{2}-e_{3},e_{1}-e_{4}\}$ & $\MF{so}(1,1)+\MF{so}(1,1)$\\
$\{e_{2}-e_{3},e_{3}-e_{1}\}$ & $\MF{so}(2,1)$\\
$\{e_{1}-e_{4}\}$ & $\MF{so}(1,1)$\\
$\{e_{3}-e_{1}\}$ & $\MF{so}(1,1)$\\
$\{e_{2}-e_{3}\}$ & $\MF{so}(1,1)$\\
$\emptyset$ & $\{0\}$\\
\hline
\end{tabular}
\begin{tabular}{|c|c|}
\hline
$\Theta(\subset s_{3}s_{2}\varPsi)$ & $\MF{h}_{\Theta}$ \\
\hline
\hline
$s_{3}s_{2}\varPsi$ & $\MF{so}(2,2)$\\
$\{e_{4}-e_{2},e_{2}-e_{3}\}$ & $\MF{so}(1,2)$\\
$\{e_{1}-e_{4},e_{2}-e_{3}\}$ & $\MF{so}(1,1)+\MF{so}(1,1)$\\
$\{e_{1}-e_{4},e_{4}-e_{2}\}$ & $\MF{so}(2,1)$\\
$\{e_{2}-e_{3}\}$ & $\MF{so}(1,1)$\\
$\{e_{4}-e_{2}\}$ & $\MF{so}(1,1)$\\
$\{e_{1}-e_{4}\}$ & $\MF{so}(1,1)$\\
$\emptyset$ & $\{0\}$\\
\hline
\end{tabular}

\medskip

\begin{tabular}{|c|c|}
\hline
$\Theta(\subset s_{1}s_{3}s_{2}\varPsi)$ & $\MF{h}_{\Theta}$ \\
\hline
\hline
$s_{1}s_{3}s_{2}\varPsi$ & $\MF{so}(2,2)$\\
$\{e_{4}-e_{1},e_{1}-e_{3}\}$ & $\MF{so}(1,2)$\\
$\{e_{2}-e_{4},e_{1}-e_{3}\}$ & $\MF{so}(1,1)+\MF{so}(1,1)$\\
$\{e_{2}-e_{4},e_{4}-e_{1}\}$ & $\MF{so}(2,1)$\\
$\{e_{1}-e_{3}\}$ & $\MF{so}(1,1)$\\
$\{e_{4}-e_{1}\}$ & $\MF{so}(1,1)$\\
$\{e_{2}-e_{4}\}$ & $\MF{so}(1,1)$\\
$\emptyset$ & $\{0\}$\\
\hline
\end{tabular}
\begin{tabular}{|c|c|}
\hline
$\Theta(\subset s_{2}s_{1}s_{3}s_{2}\varPsi)$ & $\MF{h}_{\Theta}$ \\
\hline
\hline
$s_{2}s_{1}s_{3}s_{2}\varPsi$ & $\MF{so}(2,2)$\\
$\{e_{4}-e_{1},e_{1}-e_{2}\}$ & $\MF{so}(2,1)$\\
$\{e_{3}-e_{4},e_{1}-e_{2}\}$ & $\MF{so}(2,0)+\MF{so}(0,2)$\\
$\{e_{3}-e_{4},e_{4}-e_{1}\}$ & $\MF{so}(1,2)$\\
$\{e_{1}-e_{2}\}$ & $\MF{so}(2,0)$\\
$\{e_{4}-e_{1}\}$ & $\MF{so}(1,1)$\\
$\{e_{3}-e_{4}\}$ & $\MF{so}(0,2)$\\
$\emptyset$ & $\{0\}$\\
\hline
\end{tabular}
\end{table}

\begin{Ex}\label{ex.local1}
Let $G/H$ be a semisimple pseudo-Riemannian symmetric space.
Suppose that the restricted root system $\Delta$
with respect to a vector-type maximal split abelian subspace
satisfies $\Delta = \Delta^{a}$ or
$(\Delta, \Delta^{a})=((BC)_{r},$ $B_{r})$. 
Then we have $\MC{W}(\Delta)/\MC{W}(\Delta^{a})=\{\OPE{id}\}$.
It follows from Theorem \ref{thm.main}
that $\MC{L}_{h}(G/H)=\{[\MF{h}_{\Theta}]\mid \Theta \subset \varPsi\}$
holds,
where $\varPsi$ is a standard simple root system of $\Delta$.
In Table \ref{table.localtable.1},
we list up the set of all the possible local orbit types
of the hyperbolic orbits for the s-representations
associated with
all classical-type semisimple pseudo-Riemannian symmetric spaces
satisfying $\Delta=\Delta^{a}$ or $(\Delta,\Delta^{a})=((BC)_{r},B_{r})$.
\end{Ex}

\begin{table}[htbp]
\footnotesize
\caption{Local orbit types}\label{table.localtable.1} 
\begin{flushleft}
(I) $\Delta=\Delta^{a}=A_{r},\,\varPsi=\{e_{i}-e_{i+1}\mid 1 \leq i \leq r\}$ 
\end{flushleft}

\begin{center}
\begin{tabular}{|>{\PBS\centering}p{\LENGTHTHETA}|>{\PBS\centering}p{\LENGTHH}|}
\multicolumn{2}{l}{
$(\MF{g},\MF{h})=(\MF{sl}(n,\BS{R})+\MF{sl}(n,\BS{R}),\MF{sl}(n,\BS{R}))$
}\\
\hline
$\Theta (\subset \varPsi)$ & 
$\varPsi\setminus\{e_{i_{1}}-e_{i_{1}+1},\ldots,e_{i_{k}}-e_{i_{k}+1}\}$\\
\hline
$\MF{h}_{\Theta}$ &
$\BS{R}^{k}+\displaystyle{\sum^{k+1}_{l=1}}\MF{sl}(i_{l}-i_{l-1},\BS{R})$\\
\hline
Remarks &
$0=i_{0}<i_{1}<\cdots<i_{k}<i_{k+1}= n$\\
\hline
\end{tabular}
\end{center}
\begin{center}
\begin{tabular}{|>{\PBS\centering}p{\LENGTHTHETA}|>{\PBS\centering}p{\LENGTHH}|}
\multicolumn{2}{l}{
$(\MF{g},\MF{h})=(\MF{sl}(n,\BS{C}),\MF{so}(n,\BS{C}))$
}\\
\hline
$\Theta(\subset \varPsi)$ & $\varPsi\setminus\{e_{i_{1}}-e_{i_{1}+1},\ldots,e_{i_{k}}-e_{i_{k}+1}\}$\\
\hline
$\MF{h}_{\Theta}$ & $\displaystyle{\sum^{k+1}_{l=1}}\MF{so}(i_{l}-i_{l-1},\BS{C})$\\
\hline
Remarks & $0=i_{0}<i_{1}<\cdots<i_{k} <  i_{k+1} = n$\\
\hline
\end{tabular}
\end{center}
\begin{center}
\begin{tabular}{|>{\PBS\centering}p{\LENGTHTHETA}|>{\PBS\centering}p{\LENGTHH}|}
\multicolumn{2}{l}{
$(\MF{g},\MF{h})=(\MF{su}^{*}(2n)+\MF{su}^{*}(2n),\MF{su}^{*}(2n))$
}\\
\hline
$\Theta(\subset \varPsi)$ & $\varPsi\setminus\{e_{i_{1}}-e_{i_{1}+1},\ldots,e_{i_{k}}-e_{i_{k}+1}\}$\\
\hline
$\MF{h}_{\Theta}$ & $\BS{R}^{k}+\displaystyle{\sum^{k+1}_{l=1}}\MF{su}^{*}(2(i_{l}-i_{l-1}))$\\
\hline
Remarks & $0=i_{0}<i_{1}<\cdots<i_{k} < i_{k+1} =  n$\\
\hline
\end{tabular}
\end{center}
\begin{center}
\begin{tabular}{|>{\PBS\centering}p{\LENGTHTHETA}|>{\PBS\centering}p{\LENGTHH}|}
\multicolumn{2}{l}{
$(\MF{g},\MF{h})=(\MF{sl}(2n,\BS{C}),\MF{sp}(n,\BS{C}))$
}\\
\hline
$\Theta(\subset \varPsi)$ & $\varPsi\setminus\{e_{i_{1}}-e_{i_{1}+1},\ldots,e_{i_{k}}-e_{i_{k}+1}\}$\\
\hline
$\MF{h}_{\Theta}$ &
$\displaystyle{\sum^{k+1}_{l=1}}\MF{sp}(i_{l}-i_{l-1},\BS{C})$\\
\hline
Remarks & $0=i_{0}<i_{1}<\cdots < i_{k}<i_{k+1}= n$\\
\hline
\end{tabular}
\end{center}
\begin{center}
\begin{tabular}{|>{\PBS\centering}p{\LENGTHTHETA}|>{\PBS\centering}p{\LENGTHH}|}
\multicolumn{2}{l}{
$(\MF{g},\MF{h})=(\MF{sl}(2n,\BS{R}),\MF{sp}(n,\BS{R}))$
}\\
\hline
$\Theta(\subset \varPsi)$ & $\varPsi\setminus\{e_{i_{1}}-e_{i_{1}+1},\ldots,e_{i_{k}}-e_{i_{k}+1}\}$\\
\hline
$\MF{h}_{\Theta}$ &
$\displaystyle{\sum^{k+1}_{l=1}}\MF{sp}(i_{l}-i_{l-1},\BS{R})$
\\
\hline
Remarks & $0=i_{0}<i_{1}<\cdots<i_{k}<i_{k+1}= n$
\\
\hline
\end{tabular}
\end{center}
\begin{center}
\begin{tabular}{|>{\PBS\centering}p{\LENGTHTHETA}|>{\PBS\centering}p{\LENGTHH}|}
\multicolumn{2}{l}{
$(\MF{g},\MF{h})=(\MF{su}^{*}(2n),\MF{so}^{*}(2n))$
}\\
\hline
$\Theta(\subset \varPsi)$ & $\varPsi\setminus\{e_{i_{1}}-e_{i_{1}+1},\ldots,e_{i_{k}}-e_{i_{k}+1}\}$
\\
\hline
$\MF{h}_{\Theta}$& 
$\displaystyle{\sum^{k+1}_{l=1}}\MF{so}^{*}(2(i_{l}-i_{l-1}))$
\\
\hline
Remarks & 
$0=i_{0}<i_{1}<\cdots<i_{k}<i_{k+1}=n$
\\
\hline
\end{tabular}
\end{center}
\medskip
\begin{flushleft}
(II) $\Delta=\Delta^{a}=B_{r},\,\varPsi=\{e_{i}-e_{i+1}\mid 1 \leq i \leq r-1\}\cup\{e_{r}\}$
\end{flushleft}
\begin{center}
\begin{tabular}{|>{\PBS\centering}p{\LENGTHTHETA}|>{\PBS\centering}p{\LENGTHH}|}
\multicolumn{2}{l}{
$(\MF{g},\MF{h})=(\MF{so}(p,n-p)+\MF{so}(p,n-p),\MF{so}(p,n-p))\,(n>2p)$
}\\
\hline
$\Theta(\subset \varPsi)$ & $\varPsi\setminus\{e_{i_{1}}-e_{i_{1}+1},\ldots, e_{i_{k-1}}-e_{i_{k-1}+1},e_{p}\}$
\\
\hline
$\MF{h}_{\Theta}$ & 
$\BS{R}^{k}+\displaystyle{\sum^{k}_{l=1}}\MF{sl}(i_{l}-i_{l-1},\BS{R})+\MF{so}(n-2p)$
\\
\hline
Remarks &
$0=i_{0}<i_{1}<\cdots<i_{k}= p$
\\
\hline
\hline
$\Theta(\subset \varPsi)$&
$\varPsi\setminus\{e_{i_{1}}-e_{i_{1}+1},\ldots,e_{i_{k}}-e_{i_{k}+1}\}$
\\
\hline
$\MF{h}_{\Theta}$& 
$\BS{R}^{k}+\displaystyle{\sum^{k}_{l=1}}\MF{sl}(i_{l}-i_{l-1},\BS{R})+\MF{so}(p-i_{k},n-p-i_{k})$
\\
\hline
Remarks&
$0=i_{0}<i_{1}<\cdots<i_{k}< p$
\\
\hline
\end{tabular}
\end{center}
\end{table}

\begin{table}[htbp]
\footnotesize
\contcaption{(continued)}
\begin{center}
\begin{tabular}{|>{\PBS\centering}p{\LENGTHTHETA}|>{\PBS\centering}p{\LENGTHH}|}
\multicolumn{2}{l}{
$(\MF{g},\MF{h})=(\MF{so}(n,\BS{C}),\MF{so}(p,\BS{C})+\MF{so}(n-p,\BS{C}))\,(n>2p)$
}\\
\hline
$\Theta(\subset \varPsi)$ & $\varPsi\setminus\{e_{i_{1}}-e_{i_{1}+1},\ldots,
e_{i_{k-1}}-e_{i_{k-1}+1},e_{p}
\}$
\\
\hline
$\MF{h}_{\Theta}$
& 
$\displaystyle{\sum^{k}_{l=1}}\MF{so}(i_{l}-i_{l-1},\BS{C})+\MF{so}(n-2p)$
\\
\hline
Remarks
&
$0=i_{0}<i_{1}<\cdots<i_{k}= p$
\\
\hline
\hline
$\Theta(\subset \varPsi)$ & 
$\varPsi\setminus\{e_{i_{1}}-e_{i_{1}+1},\ldots,
e_{i_{k-1}}-e_{i_{k-1}+1},e_{p}
\}$
\\
\hline
$\MF{h}_{\Theta}$
& 
$\displaystyle{\sum^{k}_{l=1}}\MF{so}(i_{l}-i_{l-1},\BS{C})+\MF{so}(p-i_{k},\BS{C})+\MF{so}(n-p-i_{k},\BS{C})$
\\
\hline
Remarks
&
$0=i_{0}<i_{1}<\cdots<i_{k}< p$
\\
\hline
\end{tabular}
\end{center}
\medskip
\begin{flushleft}
(III) $\Delta=\Delta^{a}=C_{r},\,\varPsi=\{e_{i}-e_{i+1}\mid 1 \leq i \leq r-1\}\cup\{2e_{r}\}$ 
\end{flushleft}
\begin{center}\begin{tabular}{|>{\PBS\centering}p{\LENGTHTHETA}|>{\PBS\centering}p{\LENGTHH}|}
\multicolumn{2}{l}{
$(\MF{g},\MF{h})=(\MF{sl}(2n,\BS{C}),\MF{su}^{*}(2n))$
}\\
\hline
$\Theta(\subset \varPsi)$ &
$\varPsi\setminus\{e_{i_{1}}-e_{i_{1}+1},\ldots,
e_{i_{k-1}}-e_{i_{k-1}+1},2e_{n}
\}$\\
\hline
$\MF{h}_{\Theta}$
&
$\BS{R}^{k-1}+\MF{so}(2)^{k}+\displaystyle{\sum^{k}_{l=1}}\MF{sl}(i_{l}-i_{l-1},\BS{C})$
\\
\hline
Remarks
&
$0=i_{0}<i_{1}<\dots<i_{k}= n$\\
\hline
\hline
$\Theta(\subset \varPsi)$ &
$\varPsi\setminus\{e_{i_{1}}-e_{i_{1}+1},\ldots,e_{i_{k}}-e_{i_{k}+1}\}$
\\
\hline
$\MF{h}_{\Theta}$
&
$\BS{R}^{k}+\MF{so}(2)^{k}+\displaystyle{\sum^{k}_{l=1}}\MF{sl}(i_{l}-i_{l-1},\BS{C})+\MF{su}^{*}(2(n-i_{k}))$
\\
\hline
Remarks
&
$0=i_{0}<i_{1}<\dots<i_{k}<n$\\
\hline
\end{tabular}
\end{center}
\begin{center}
\begin{tabular}{|>{\PBS\centering}p{\LENGTHTHETA}|>{\PBS\centering}p{\LENGTHH}|}
\multicolumn{2}{l}{
$(\MF{g},\MF{h})=(\MF{su}(n,n)+\MF{su}(n,n),\MF{su}(n,n))$
}\\
\hline
$\Theta(\subset \varPsi)$ & 
$\varPsi\setminus\{e_{i_{1}}-e_{i_{1}+1},\ldots,
e_{i_{k-1}}-e_{i_{k-1}+1},2e_{n}
\}$\\
\hline
$\MF{h}_{\Theta}$&
$\BS{R}^{k}+\MF{so}(2)^{k-1}+\displaystyle{\sum^{k}_{l=1}}\MF{sl}(i_{l}-i_{l-1},\BS{C})$\\
\hline
Remarks
&
$0=i_{0}<i_{1}<\dots<i_{k}= n$\\
\hline
\hline
$\Theta(\subset \varPsi)$&
$\varPsi\setminus\{e_{i_{1}}-e_{i_{1}+1},\ldots,e_{i_{k}}-e_{i_{k}+1}\}$
\\
\hline
$\MF{h}_{\Theta}$
&
$\BS{R}^{k}+\MF{so}(2)^{k}+\displaystyle{\sum^{k}_{l=1}}\MF{sl}(i_{l}-i_{l-1},\BS{C})+\MF{su}(n-i_{k},n-i_{k})$\\
\hline
Remarks
&
$0=i_{0}<i_{1}<\dots<i_{k}<n$\\
\hline
\end{tabular}
\end{center}
\begin{center}
\begin{tabular}{|>{\PBS\centering}p{\LENGTHTHETA}|>{\PBS\centering}p{\LENGTHH}|}
\multicolumn{2}{l}{
$(\MF{g},\MF{h})=(\MF{sl}(2n,\BS{C}),\MF{sl}(n,\BS{C})+\MF{sl}(n,\BS{C})+\BS{C})$
}\\
\hline
$\Theta(\subset \varPsi)$ &
$\varPsi\setminus\{e_{i_{1}}-e_{i_{1}+1},\ldots,
e_{i_{k-1}}-e_{i_{k-1}+1},2e_{n}
\}$
\\\hline
$\MF{h}_{\Theta}$
&
$\BS{C}^{k-1}+\displaystyle{\sum^{k}_{l=1}}\MF{sl}(i_{l}-i_{l-1},\BS{C})$\\
\hline
Remarks
&
$0=i_{0}<i_{1}<\dots<i_{k}= n$\\
\hline
\hline
$\Theta(\subset \varPsi)$ &
$\varPsi\setminus\{e_{i_{1}}-e_{i_{1}+1},\ldots,e_{i_{k}}-e_{i_{k}+1}\}$
\\\hline
$\MF{h}_{\Theta}$&
$\BS{C}^{k}+\displaystyle{\sum^{k}_{l=1}}\MF{sl}(i_{l}-i_{l-1},\BS{C})+\MF{sl}(n-i_{k},\BS{C})^{2}+\BS{C}$\\\hline
Remarks
&
$0=i_{0}<i_{1}<\dots<i_{k}<n$\\
\hline
\end{tabular}
\end{center}
\end{table}

\begin{table}[htbp]
\footnotesize
\contcaption{(continued)}
\begin{center}

\begin{tabular}{|>{\PBS\centering}p{\LENGTHTHETA}|>{\PBS\centering}p{\LENGTHH}|}
\multicolumn{2}{l}{
$(\MF{g},\MF{h})=(\MF{so}^{*}(4n)+\MF{so}^{*}(4n),\MF{so}^{*}(4n))$
}\\
\hline
$\Theta(\subset \varPsi)$ &
$\varPsi\setminus\{e_{i_{1}}-e_{i_{1}+1},\ldots,
e_{i_{k-1}}-e_{i_{k-1}+1},2e_{n}\}$
\\\hline
$\MF{h}_{\Theta}$
& 
$\BS{R}^{k}+\displaystyle{\sum^{k}_{l=1}}\MF{su}^{*}(2(i_{l}-i_{l-1}))$
\\\hline
Remarks
&
$0=i_{0}<i_{1}<\cdots<i_{k}= n$
\\
\hline
\hline
$\Theta(\subset \varPsi)$ &
$\varPsi\setminus\{e_{i_{1}}-e_{i_{1}+1},\ldots,e_{i_{k}}-e_{i_{k}+1}\}$
\\\hline
$\MF{h}_{\Theta}$
& 
$\BS{R}^{k}+\displaystyle{\sum^{k}_{l=1}}\MF{su}^{*}(2(i_{l}-i_{l-1}))+\MF{so}^{*}(4(n-i_{k}))$
\\\hline
Remarks
&
$0=i_{0}<i_{1}<\cdots<i_{k}< n$
\\
\hline
\end{tabular}
\end{center}
\begin{center}

\begin{tabular}{|>{\PBS\centering}p{\LENGTHTHETA}|>{\PBS\centering}p{\LENGTHH}|}
\multicolumn{2}{l}{
$(\MF{g},\MF{h})=(\MF{so}(4n,\BS{C}),\MF{sl}(2n,\BS{C})+\BS{C})$
}\\
\hline
$\Theta(\subset \varPsi)$ &
$\varPsi\setminus\{e_{i_{1}}-e_{i_{1}+1},\ldots,
e_{i_{k-1}}-e_{i_{k-1}+1},2e_{n}\}$
\\\hline
$\MF{h}_{\Theta}$
& 
$\displaystyle{\sum^{k}_{l=1}}\MF{sp}(i_{l}-i_{l-1},\BS{C})$
\\\hline
Remarks
&
$0=i_{0}<i_{1}<\cdots<i_{k}=n$
\\
\hline
\hline
$\Theta(\subset \varPsi)$ &
$\varPsi\setminus\{e_{i_{1}}-e_{i_{1}+1},\ldots,e_{i_{k}}-e_{i_{k}+1}\}$
\\\hline
$\MF{h}_{\Theta}$
&
$\displaystyle{\sum^{k}_{l=1}}\MF{sp}(i_{l}-i_{l-1},\BS{C})+\MF{sl}(2(n-i_{k}),\BS{C})+\BS{C}$
\\\hline
Remarks
&
$0=i_{0}<i_{1}<\cdots<i_{k}<n$
\\
\hline
\end{tabular}
\end{center}
\begin{center}

\begin{tabular}{|>{\PBS\centering}p{\LENGTHTHETA}|>{\PBS\centering}p{\LENGTHH}|}
\multicolumn{2}{l}{
$(\MF{g},\MF{h})=(\MF{sp}(n,\BS{R})+\MF{sp}(n,\BS{R}),\MF{sp}(n,\BS{R}))$
}\\
\hline
$\Theta(\subset \varPsi)$ &
$\varPsi\setminus\{e_{i_{1}}-e_{i_{1}+1},\ldots,
e_{i_{k-1}}-e_{i_{k-1}+1},2e_{n}\}$
\\\hline
$\MF{h}_{\Theta}$ & 
$\BS{R}^{k}+\displaystyle{\sum^{k}_{l=1}}\MF{sl}(i_{l}-i_{l-1},\BS{R})$
\\\hline
Remarks
&
$0=i_{0}<i_{1}<\cdots<i_{k}= n$
\\
\hline
\hline
$\Theta(\subset \varPsi)$ &
$\varPsi\setminus\{e_{i_{1}}-e_{i_{1}+1},\ldots,e_{i_{k}}-e_{i_{k}+1}\}$
\\\hline
$\MF{h}_{\Theta}$& 
$\BS{R}^{k}+\displaystyle{\sum^{k}_{l=1}}\MF{sl}(i_{l}-i_{l-1},\BS{R})+\MF{sp}(n-i_{k},\BS{R})$
\\\hline
Remarks
&
$0=i_{0}<i_{1}<\cdots<i_{k}< n$
\\
\hline
\end{tabular}
\end{center}
\begin{center}
\begin{tabular}{|>{\PBS\centering}p{\LENGTHTHETA}|>{\PBS\centering}p{\LENGTHH}|}
\multicolumn{2}{l}{
$(\MF{g},\MF{h})=(\MF{sp}(n,\BS{C}),\MF{sl}(n,\BS{C})+\BS{C})$
}\\
\hline
$\Theta(\subset \varPsi)$ &
$\varPsi\setminus\{e_{i_{1}}-e_{i_{1}+1},\ldots,
e_{i_{k-1}}-e_{i_{k-1}+1},2e_{n}\}$
\\\hline
$\MF{h}_{\Theta}$
& 
$\displaystyle{\sum^{k}_{l=1}}\MF{so}(i_{l}-i_{l-1},\BS{C})$
\\\hline
Remarks
&
$0=i_{0}<i_{1}<\cdots<i_{k}=n$
\\
\hline
\hline
$\Theta(\subset \varPsi)$ &
$\varPsi\setminus\{e_{i_{1}}-e_{i_{1}+1},\ldots,e_{i_{k}}-e_{i_{k}+1}\}$
\\\hline
$\MF{h}_{\Theta}$&
$\displaystyle{\sum^{k}_{l=1}}\MF{so}(i_{l}-i_{l-1},\BS{C})+\MF{sl}(n-i_{k},\BS{C})+\BS{C}$
\\\hline
Remarks
&
$0=i_{0}<i_{1}<\cdots<i_{k}<n$
\\
\hline
\end{tabular}
\end{center}
\end{table}

\begin{table}[htbp]
\footnotesize
\contcaption{(continued)}
\begin{center}

\begin{tabular}{|>{\PBS\centering}p{\LENGTHTHETA}|>{\PBS\centering}p{\LENGTHH}|}
\multicolumn{2}{l}{
$(\MF{g},\MF{h})=(\MF{sp}(n,n)+\MF{sp}(n,n),\MF{sp}(n,n))$
}\\
\hline
$\Theta(\subset \varPsi)$ &
$\varPsi\setminus\{e_{i_{1}}-e_{i_{1}+1},\ldots,
e_{i_{k-1}}-e_{i_{k-1}+1},2e_{n}
\}$
\\\hline
$\MF{h}_{\Theta}$
& 
$\displaystyle{\BS{R}^{k}+\sum^{k}_{l=1}\MF{su}^{*}(2(i_{l}-i_{l-1}))}$
\\\hline
Remarks
&
$0=i_{0}<i_{1}<\cdots<i_{k}=n$
\\
\hline
\hline
$\Theta(\subset \varPsi)$ &
$\varPsi\setminus\{e_{i_{1}}-e_{i_{1}+1},\ldots,e_{i_{k}}-e_{i_{k}+1}\}$
\\\hline
$\MF{h}_{\Theta}$
& 
$\displaystyle{\BS{R}^{k}+\sum^{k}_{l=1}\MF{su}^{*}(2(i_{l}-i_{l-1}))+\MF{sp}(n-i_{k},n-i_{k})}$
\\\hline
Remarks
&
$0=i_{0}<i_{1}<\cdots<i_{k}<n$
\\
\hline
\end{tabular}
\end{center}
\begin{center}

\begin{tabular}{|>{\PBS\centering}p{\LENGTHTHETA}|>{\PBS\centering}p{\LENGTHH}|}
\multicolumn{2}{l}{
$(\MF{g},\MF{h})=(\MF{sp}(2n,\BS{C}),\MF{sp}(n,\BS{C})+\MF{sp}(n,\BS{C}))$
}\\
\hline
$\Theta(\subset \varPsi)$ &
$\varPsi\setminus\{e_{i_{1}}-e_{i_{1}+1},\ldots,
e_{i_{k-1}}-e_{i_{k-1}+1},2e_{n}\}$
\\\hline
$\MF{h}_{\Theta}$
& 
$\displaystyle{\sum^{k}_{l=1}\MF{sp}(i_{l}-i_{l-1},\BS{C})}$
\\\hline
Remarks
&
$0=i_{0}<i_{1}<\cdots<i_{k}=n$
\\
\hline
\hline
$\Theta(\subset \varPsi)$ &
$\varPsi\setminus\{e_{i_{1}}-e_{i_{1}+1},\ldots,e_{i_{k}}-e_{i_{k}+1}\}$
\\\hline
$\MF{h}_{\Theta}$ 
& 
$\displaystyle{\sum^{k}_{l=1}\MF{sp}(i_{l}-i_{l-1},\BS{C})}+\MF{sp}(n-i_{k},\BS{C})^{2}$
\\\hline
Remarks
&
$0=i_{0}<i_{1}<\cdots<i_{k}<n$
\\
\hline
\end{tabular}
\end{center}
\begin{center}
\begin{tabular}{|>{\PBS\centering}p{\LENGTHTHETA}|>{\PBS\centering}p{\LENGTHH}|}
\multicolumn{2}{l}{
$(\MF{g},\MF{h})=(\MF{su}(2n,2n),\MF{sp}(n,n))$
}\\
\hline
$\Theta(\subset \varPsi)$ &
$\varPsi\setminus\{e_{i_{1}}-e_{i_{1}+1},\ldots,
e_{i_{k-1}}-e_{i_{k-1}+1},2e_{n}
\}$
\\\hline
$\MF{h}_{\Theta}$
& 
$\displaystyle{\sum^{k}_{l=1}\MF{sp}(i_{l}-i_{l-1},\BS{C})}$
\\\hline
Remarks
&
$0=i_{0}<i_{1}<\cdots<i_{k}=n$
\\
\hline
\hline
$\Theta(\subset \varPsi)$ &
$\varPsi\setminus\{e_{i_{1}}-e_{i_{1}+1},\ldots,e_{i_{k}}-e_{i_{k}+1}\}$
\\\hline
$\MF{h}_{\Theta}$
& 
$\displaystyle{\sum^{k}_{l=1}\MF{sp}(i_{l}-i_{l-1},\BS{C})+\MF{sp}(n-i_{k},n-i_{k})}$
\\\hline
Remarks
&
$0=i_{0}<i_{1}<\cdots<i_{k}<n$
\\
\hline
\end{tabular}
\end{center}
\begin{center}
 \begin{tabular}{|>{\PBS\centering}p{\LENGTHTHETA}|>{\PBS\centering}p{\LENGTHH}|}
\multicolumn{2}{l}{
$(\MF{g},\MF{h})=(\MF{su}^{*}(4n),\MF{su}^{*}(2n)+\MF{su}^{*}(2n)+\BS{R})$
}\\
\hline
$\Theta(\subset \varPsi)$ &
$\varPsi\setminus\{e_{i_{1}}-e_{i_{1}+1},\ldots,
e_{i_{k-1}}-e_{i_{k-1}+1},2e_{n}
\}$
\\\hline
$\MF{h}_{\Theta}$
& 
$\BS{R}^{k-1}+\displaystyle{\sum^{k}_{l=1}}\MF{su}^{*}(2(i_{l}-i_{l-1}))$
\\\hline
Remarks
&
$0=i_{0}<i_{1}<\cdots<i_{k}=n$
\\
\hline
\hline
$\Theta(\subset \varPsi)$
&
$\varPsi\setminus\{e_{i_{1}}-e_{i_{1}+1},\ldots,e_{i_{k}}-e_{i_{k}+1}\}$
\\\hline
$\MF{h}_{\Theta}$
&
$\BS{R}^{k}+\displaystyle{\sum^{k}_{l=1}}\MF{su}^{*}(2(i_{l}-i_{l-1}))+\MF{su}^{*}(2(n-i_{k}))^{2}+\BS{R}$
\\\hline
Remarks
&
$0=i_{0}<i_{1}<\cdots<i_{k}<n$
\\
\hline
\end{tabular}
\end{center}
\end{table}

\begin{table}[htbp]
\footnotesize
\contcaption{(continued)}
\begin{center}
\begin{tabular}{|>{\PBS\centering}p{\LENGTHTHETA}|>{\PBS\centering}p{\LENGTHH}|}
\multicolumn{2}{l}{
$(\MF{g},\MF{h})=(\MF{su}(n,n),\MF{so}^{*}(2n))$
}\\
\hline
$\Theta(\subset \varPsi)$ &
$\varPsi\setminus\{e_{i_{1}}-e_{i_{1}+1},\ldots,
e_{i_{k-1}}-e_{i_{k-1}+1},2e_{n}
\}$
\\\hline
$\MF{h}_{\Theta}$
& 
$\displaystyle{\sum^{k}_{l=1}}\MF{so}(i_{l}-i_{l-1},\BS{C})$
\\\hline
Remarks
&
$0=i_{0}<i_{1}<\cdots<i_{k} =  n$
\\
\hline
\hline
$\Theta(\subset \varPsi)$&
$\varPsi\setminus\{e_{i_{1}}-e_{i_{1}+1},\ldots,e_{i_{k}}-e_{i_{k}+1}\}$
\\\hline
$\MF{h}_{\Theta}(\subset \varPsi)$
& 
$\displaystyle{\sum^{k}_{l=1}}\MF{so}(i_{l}-i_{l-1},\BS{C})+\MF{so}^{*}(2(n-i_{k}))$
\\\hline
Remarks
&
$0=i_{0}<i_{1}<\cdots<i_{k} < n$
\\
\hline
\end{tabular}
\end{center}
\begin{center}
\begin{tabular}{|>{\PBS\centering}p{\LENGTHTHETA}|>{\PBS\centering}p{\LENGTHH}|}
\multicolumn{2}{l}{
$(\MF{g},\MF{h})=(\MF{sl}(2n,\BS{R}),\MF{sl}(n,\BS{C})+\MF{so}(2))$
}\\
\hline
$\Theta(\subset \varPsi)$ &
$\varPsi\setminus\{e_{i_{1}}-e_{i_{1}+1},\ldots,
e_{i_{k-1}}-e_{i_{k-1}+1},2e_{n}
\}$
\\\hline
$\MF{h}_{\Theta}$
& 
$\BS{R}^{k-1}+\displaystyle{\sum^{k}_{l=1}}\MF{sl}(i_{l}-i_{l-1},\BS{R})$
\\\hline
Remarks
&
$0=i_{0}<i_{1}<\cdots<i_{k}=n$
\\
\hline
\hline
$\Theta(\subset \varPsi)$&
$\varPsi\setminus\{e_{i_{1}}-e_{i_{1}+1},\ldots,e_{i_{k}}-e_{i_{k}+1}\}$
\\\hline
$\MF{h}_{\Theta}$
&
$\BS{R}^{k}+\displaystyle{\sum^{k}_{l=1}}\MF{sl}(i_{l}-i_{l-1},\BS{R})+\MF{sl}(n-i_{k},\BS{C})+\MF{so}(2)$
\\\hline
Remarks
&
$0=i_{0}<i_{1}<\cdots<i_{k}<n$
\\
\hline
\end{tabular}
\end{center}
\begin{center}
\begin{tabular}{|>{\PBS\centering}p{\LENGTHTHETA}|>{\PBS\centering}p{\LENGTHH}|}
\multicolumn{2}{l}{
$(\MF{g},\MF{h})=(\MF{su}^{*}(4n),\MF{sl}(2n,\BS{C})+\MF{so}(2))$
}\\
\hline
$\Theta(\subset \varPsi)$ &
$\varPsi\setminus\{e_{i_{1}}-e_{i_{1}+1},\ldots,
e_{i_{k-1}}-e_{i_{k-1}+1},2e_{n}\}$
\\\hline
$\MF{h}_{\Theta}$
& 
$\BS{R}^{k-1}+\displaystyle{\sum^{k}_{l=1}}\MF{su}^{*}(2(i_{l}-i_{l-1}))$
\\\hline
Remarks
&
$0=i_{0}<i_{1}<\cdots<i_{k}=n$
\\
\hline
\hline
$\Theta(\subset \varPsi)$ &
$\varPsi\setminus\{e_{i_{1}}-e_{i_{1}+1},\ldots,e_{i_{k}}-e_{i_{k}+1}\}$
\\\hline
$\MF{h}_{\Theta}$
&
$\BS{R}^{k}+\displaystyle{\sum^{k}_{l=1}}\MF{su}^{*}(2(i_{l}-i_{l-1}))+\MF{sl}(2(n-i_{k}),\BS{C})+\MF{so}(2)$
\\\hline
Remarks
&
$0=i_{0}<i_{1}<\cdots<i_{k}<n$
\\
\hline
\end{tabular}
\end{center}
\begin{center}
\begin{tabular}{|>{\PBS\centering}p{\LENGTHTHETA}|>{\PBS\centering}p{\LENGTHH}|}
\multicolumn{2}{l}{
$(\MF{g},\MF{h})=(\MF{su}(2n,2n),\MF{sp}(2n,\BS{R}))$
}\\
\hline
$\Theta(\subset \varPsi)$ &
$\varPsi\setminus\{e_{i_{1}}-e_{i_{1}+1},\ldots,
e_{i_{k-1}}-e_{i_{k-1}+1},2e_{n}\}$
\\\hline
$\MF{h}_{\Theta}$
& 
$\displaystyle{\sum^{k}_{l=1}}\MF{sp}(i_{l}-i_{l-1},\BS{C})$
\\\hline
$\MF{h}_{\Theta}$
&
$0=i_{0}<i_{1}<\cdots<i_{k}=n$
\\\hline
\hline
$\Theta(\subset\varPsi)$&
$\varPsi\setminus\{e_{i_{1}}-e_{i_{1}+1},\ldots,e_{i_{k}}-e_{i_{k}+1}\}$
\\\hline
$\MF{h}_{\Theta}$
& 
$\displaystyle{\sum^{k}_{l=1}}\MF{sp}(i_{l}-i_{l-1},\BS{C})+\MF{sp}(2(n-i_{k}),\BS{R})$
\\\hline
Remarks
&
$0=i_{0}<i_{1}<\cdots<i_{k}< n$
\\
\hline
\end{tabular}
\end{center}
\end{table}

\begin{table}[htbp]
\footnotesize
\contcaption{(continued)}
\begin{center}
\begin{tabular}{|>{\PBS\centering}p{\LENGTHTHETA}|>{\PBS\centering}p{\LENGTHH}|}
\multicolumn{2}{l}{
$(\MF{g},\MF{h})=(\MF{so}(2n,2n),\MF{su}(n,n)+\MF{so}(2))$
}\\
\hline
$\Theta(\subset \varPsi)$ &
$\varPsi\setminus\{e_{i_{1}}-e_{i_{1}+1},\ldots,
e_{i_{k-1}}-e_{i_{k-1}+1},2e_{n}
\}$
\\\hline
$\MF{h}_{\Theta}$
& 
$\displaystyle{\sum^{k}_{l=1}\MF{sp}(i_{l}-i_{l-1},\BS{R})}$
\\\hline
Remarks
&
$0=i_{0}<i_{1}<\cdots<i_{k}=n$
\\
\hline
\hline
$\Theta(\subset \varPsi)$ &
$\varPsi\setminus\{e_{i_{1}}-e_{i_{1}+1},\ldots,e_{i_{k}}-e_{i_{k}+1}\}$
\\\hline
$\MF{h}_{\Theta}$
& 
$\displaystyle{\sum^{k}_{l=1}\MF{sp}(i_{l}-i_{l-1},\BS{R})+\MF{su}(n-i_{k},n-i_{k})+\MF{so}(2)}$
\\\hline
Remarks
&
$0=i_{0}<i_{1}<\cdots<i_{k}<n$
\\
\hline
\end{tabular}
\end{center}
\begin{center}
\begin{tabular}{|>{\PBS\centering}p{\LENGTHTHETA}|>{\PBS\centering}p{\LENGTHH}|}
\multicolumn{2}{l}{
$(\MF{g},\MF{h})=(\MF{so}^{*}(4n),\MF{so}^{*}(2n)+\MF{so}^{*}(2n))$
}\\
\hline
$\Theta(\subset \varPsi)$ &
$\varPsi\setminus\{e_{i_{1}}-e_{i_{1}+1},\ldots,
e_{i_{k-1}}-e_{i_{k-1}+1},2e_{n}
\}$
\\\hline
$\MF{h}_{\Theta}$
& 
$\displaystyle{\sum^{k}_{l=1}\MF{so}^{*}(2(i_{l}-i_{l-1}))}$
\\\hline
Remarks
&
$0=i_{0}<i_{1}<\cdots<i_{k}=n$
\\
\hline
\hline
$\Theta(\subset \varPsi)$ &
$\varPsi\setminus\{e_{i_{1}}-e_{i_{1}+1},\ldots,e_{i_{k}}-e_{i_{k}+1}\}$
\\\hline
$\MF{h}_{\Theta}$
& 
$\displaystyle{\sum^{k}_{l=1}\MF{so}^{*}(2(i_{l}-i_{l-1}))+\MF{so}^{*}(2(n-i_{k}))^{2}}$
\\\hline
Remarks
&
$0=i_{0}<i_{1}<\cdots<i_{k}<n$
\\
\hline
\end{tabular}
\end{center}
\begin{center}
 \begin{tabular}{|>{\PBS\centering}p{\LENGTHTHETA}|>{\PBS\centering}p{\LENGTHH}|}
\multicolumn{2}{l}{
$(\MF{g},\MF{h})=(\MF{sp}(n,n),\MF{su}(n,n)+\MF{so}(2))$
}\\
\hline
$\Theta(\subset \varPsi)$ &
$\varPsi\setminus\{e_{i_{1}}-e_{i_{1}+1},\ldots, e_{i_{k-1}}-e_{i_{k-1}+1},2e_{n}\}$
\\\hline
$\MF{h}_{\Theta}$
& 
$\displaystyle{\sum^{k}_{l=1}\MF{so}^{*}(2(i_{l}-i_{l-1}))}$
\\\hline
Remarks
&
$0=i_{0}<i_{1}<\cdots<i_{k}=n$
\\
\hline
\hline
$\Theta(\subset \varPsi)$ &
$\varPsi\setminus\{e_{i_{1}}-e_{i_{1}+1},\ldots,e_{i_{k}}-e_{i_{k}+1}\}$
\\\hline
$\MF{h}_{\Theta}$
&
$\displaystyle{\sum^{k}_{l=1}}\MF{so}^{*}(2(i_{l}-i_{l-1}))+\MF{su}(n-i_{k},n-i_{k})+\MF{so}(2)$
\\\hline
Remarks
&
$0=i_{0}<i_{1}<\cdots<i_{k}<n$
\\
\hline
\end{tabular}
\end{center}
\begin{center}
\begin{tabular}{|>{\PBS\centering}p{\LENGTHTHETA}|>{\PBS\centering}p{\LENGTHH}|}
\multicolumn{2}{l}{
$(\MF{g},\MF{h})=(\MF{sp}(2n,\BS{R}),\MF{sp}(n,\BS{R})+\MF{sp}(n,\BS{R}))$
}\\
\hline
$\Theta(\subset\varPsi)$ &
$\varPsi\setminus\{e_{i_{1}}-e_{i_{1}+1},\ldots,
e_{i_{k-1}}-e_{i_{k-1}+1},2e_{n}
\}$
\\\hline
$\MF{h}_{\Theta}$
& 
$\displaystyle{\sum^{k}_{l=1}\MF{sp}(i_{l}-i_{l-1},\BS{R})}$
\\\hline
Remarks
&
$0=i_{0}<i_{1}<\cdots<i_{k}=n$
\\
\hline
\hline
$\Theta(\subset \varPsi)$ &
$\varPsi\setminus\{e_{i_{1}}-e_{i_{1}+1},\ldots,e_{i_{k}}-e_{i_{k}+1}\}$
\\\hline
$\MF{h}_{\Theta}$
& 
$\displaystyle{\sum^{k}_{l=1}\MF{sp}(i_{l}-i_{l-1},\BS{R})+\MF{sp}(n-i_{k},\BS{R})^{2}}$
\\\hline
Remarks
&
$0=i_{0}<i_{1}<\cdots<i_{k}<n$
\\
\hline
\end{tabular}
\end{center}
\end{table}

\begin{table}[htbp]
\footnotesize
\contcaption{(continued)}
\begin{center}
\begin{tabular}{|>{\PBS\centering}p{\LENGTHTHETA}|>{\PBS\centering}p{\LENGTHH}|}
\multicolumn{2}{l}{
$(\MF{g},\MF{h})=(\MF{sp}(2n,\BS{R}),\MF{sp}(n,\BS{C})$
}\\
\hline
$\Theta(\subset \varPsi)$ &
$\varPsi\setminus\{e_{i_{1}}-e_{i_{1}+1},\ldots,
e_{i_{k-1}}-e_{i_{k-1}+1},2e_{n}\}$
\\\hline
$\MF{h}_{\Theta}$
& 
$\displaystyle{\sum^{k}_{l=1}}\MF{sp}(i_{l}-i_{l-1},\BS{R})$
\\\hline
Remarks
&
$0=i_{0}<i_{1}<\cdots<i_{k}= n$
\\
\hline
\hline
$\Theta(\subset \varPsi)$ &
$\varPsi\setminus\{e_{i_{1}}-e_{i_{1}+1},\ldots,e_{i_{k}}-e_{i_{k}+1}\}$
\\\hline
$\MF{h}_{\Theta}$
& 
$\displaystyle{\sum^{k}_{l=1}}\MF{sp}(i_{l}-i_{l-1},\BS{R})+\MF{sp}(n-i_{k},\BS{C})$
\\\hline
Remarks
&
$0=i_{0}<i_{1}<\cdots<i_{k}< n$
\\
\hline
\end{tabular}
\end{center}
\begin{center}
\begin{tabular}{|>{\PBS\centering}p{\LENGTHTHETA}|>{\PBS\centering}p{\LENGTHH}|}
\multicolumn{2}{l}{
$(\MF{g},\MF{h})=(\MF{sp}(n,n),\MF{su}^{*}(2n)+\BS{R})$
}\\
\hline
$\Theta(\subset \varPsi)$ &
$\varPsi\setminus\{e_{i_{1}}-e_{i_{1}+1},\ldots,
e_{i_{k-1}}-e_{i_{k-1}+1},2e_{n}\}$
\\\hline
$\MF{h}_{\Theta}$
& 
$\displaystyle{\sum^{k}_{l=1}}\MF{so}^{*}(2(i_{l}-i_{l-1}))$
\\\hline
Remarks
&
$0=i_{0}<i_{1}<\cdots<i_{k}= n$
\\
\hline
\hline
$\Theta(\subset \varPsi)$ &
$\varPsi\setminus\{e_{i_{1}}-e_{i_{1}+1},\ldots,e_{i_{k}}-e_{i_{k}+1}\}$
\\\hline
$\MF{h}_{\Theta}$
&
$\displaystyle{\sum^{k}_{l=1}}\MF{so}^{*}(2(i_{l}-i_{l-1}))+\MF{su}^{*}(2(n-i_{k}))+\BS{R}$
\\\hline
Remarks
&
$0=i_{0}<i_{1}<\cdots<i_{k}<n$
\\
\hline
\end{tabular}
\end{center}
\medskip
\begin{flushleft}
(IV) $\Delta=D_{r},\,\Delta^{a}=\Delta,\,\varPsi=\{e_{i}-e_{i+1}\mid 1 \leq i \leq r-1\}\cup\{e_{r-1}+e_{r}\}$
\end{flushleft}
\begin{center}
\begin{tabular}{|>{\PBS\centering}p{\LENGTHTHETA}|>{\PBS\centering}p{\LENGTHH}|}
\multicolumn{2}{l}{
$(\MF{g},\MF{h})=(\MF{so}(n,n)+\MF{so}(n,n),\MF{so}(n,n))$
}\\
\hline
$\Theta(\subset \varPsi)$ &
$\varPsi \setminus \{e_{i_{1}}-e_{i_{1}+1},\ldots,e_{i_{k-1}}-e_{i_{k-1}},e_{n-1}+e_{n}\}$
\\\hline
$\MF{h}_{\Theta}$& 
$\BS{R}^{k}+\displaystyle{\sum^{k}_{l=1}}\MF{sl}(i_{l}-i_{l-1},\BS{R})$
\\\hline
Remarks
&
$0=i_{0}<i_{1}<\cdots<i_{k}=n$
\\
\hline
\hline
$\Theta(\subset \varPsi)$&
$\varPsi \setminus \{e_{i_{1}}-e_{i_{1}+1},\ldots,e_{i_{k-1}}-e_{i_{k-1}},e_{i_{k}}-e_{i_{k}+1}\}$
\\\hline
$\MF{h}_{\Theta}$
&
$\BS{R}^{k}+\displaystyle{\sum^{k}_{l=1}}\MF{sl}(i_{l}-i_{l-1},\BS{R})+\MF{so}(n-i_{k},n-i_{k})$
\\\hline
Remarks
&
$0=i_{0}<i_{1}<\cdots<i_{k}< n$
\\
\hline
\end{tabular}
\end{center}
\begin{center}
\begin{tabular}{|>{\PBS\centering}p{\LENGTHTHETA}|>{\PBS\centering}p{\LENGTHH}|}
\multicolumn{2}{l}{
$(\MF{g},\MF{h})=(\MF{so}(2n,\BS{C}),\MF{so}(n,\BS{C})+\MF{so}(n,\BS{C}))$
}\\
\hline
$\Theta(\subset \varPsi)$ &
$\varPsi\setminus\{e_{i_{1}}-e_{i_{1}+1},\ldots,e_{i_{k-1}}-e_{i_{k-1}+1},e_{n-1}+e_{n}\}$
\\\hline
$\MF{h}_{\Theta}$& 
$\displaystyle{\sum^{k}_{l=1}}\MF{so}(i_{l}-i_{l-1},\BS{C})$
\\\hline
Remarks
&
$0=i_{0}<i_{1}<\cdots<i_{k}=n$
\\
\hline
\hline
$\Theta (\subset \varPsi)$ &
$\varPsi\setminus\{e_{i_{1}}-e_{i_{1}+1},\ldots,e_{i_{k}}-e_{i_{k}+1}\}$
\\\hline
$\MF{h}_{\Theta}$&
$\displaystyle{\sum^{k}_{l=1}}\MF{so}(i_{l}-i_{l-1},\BS{C})+\MF{so}(n-i_{k},\BS{C})^{2}$
\\\hline
Remarks
&
$0=i_{0}<i_{1}<\cdots<i_{k}<n$
\\
\hline
\end{tabular}
\end{center}
\end{table}

\begin{table}[htbp]
\footnotesize
\begin{flushleft}
(V) $\Delta=\Delta^{a}=(BC)_{r},\,\varPsi=\{e_{i}-e_{i+1}\mid 1 \leq i \leq r-1\}\cup\{e_{r}\}$ 
\end{flushleft}
\begin{center}
\begin{tabular}{|>{\PBS\centering}p{\LENGTHTHETA}|>{\PBS\centering}p{\LENGTHH}|}
\multicolumn{2}{l}{
$(\MF{g},\MF{h})=(\MF{su}(p,n-p)+\MF{su}(p,n-p),\MF{su}(p,n-p))\,(n > 2p)$
}\\
\hline
$\Theta(\subset \varPsi)$ &
$\varPsi\setminus\{e_{i_{1}}-e_{i_{1}+1},\ldots,
e_{i_{k-1}}-e_{i_{k-1}+1},e_{p}
\}$
\\\hline
$\MF{h}_{\Theta}$
&
$\BS{R}^{k}+\MF{so}(2)^{k}+\displaystyle{\sum^{k}_{l=1}}\MF{sl}(i_{l}-i_{l-1},\BS{C})$
\\\hline
Remarks
&
$0=i_{0}<i_{1}<\dots<i_{k}= p$\\
\hline
\hline
$\Theta(\subset \varPsi)$ &
$\varPsi\setminus\{e_{i_{1}}-e_{i_{1}+1},\ldots,e_{i_{k}}-e_{i_{k}+1}\}$
\\\hline
$\MF{h}_{\Theta}$&
$\BS{R}^{k}+\MF{so}(2)^{k}+\displaystyle{\sum^{k}_{l=1}}\MF{sl}(i_{l}-i_{l-1},\BS{C})+\MF{su}(n-2p)$
\\\hline
Remarks
&
$0=i_{0}<i_{1}<\dots<i_{k}<p$\\
\hline
\end{tabular}
\end{center}
\begin{center}
\begin{tabular}{|>{\PBS\centering}p{\LENGTHTHETA}|>{\PBS\centering}p{\LENGTHH}|}
\multicolumn{2}{l}{
$(\MF{g},\MF{h})=(\MF{sl}(n,\BS{C}),\MF{sl}(p,\BS{C})+\MF{sl}(n-p,\BS{C})+\BS{C})\,(n > 2p)$
}\\
\hline
$\Theta(\subset \varPsi)$ &
$\varPsi\setminus\{e_{i_{1}}-e_{i_{1}+1},\ldots,
e_{i_{k-1}}-e_{i_{k-1}+1},e_{p}
\}$
\\\hline
$\MF{h}_{\Theta}$&
$\BS{C}^{k}+\displaystyle{\sum^{k}_{l=1}}\MF{sl}(i_{l}-i_{l-1},\BS{C})+\MF{sl}(n-2p,\BS{C})$
\\\hline
Remarks
&
$0=i_{0}<i_{1}<\dots<i_{k}= p$\\
\hline
\hline
$\Theta(\subset \varPsi)$ &
$\varPsi\setminus\{e_{i_{1}}-e_{i_{1}+1},\ldots,e_{i_{k}}-e_{i_{k}+1}\}$
\\\hline
$\MF{h}_{\Theta}$&
$\BS{C}^{k}+\displaystyle{\sum^{k}_{l=1}}\MF{sl}(i_{l}-i_{l-1},\BS{C})+\MF{sl}(p-i_{k},\BS{C})+\MF{sl}(n-p-i_{k},\BS{C})+\BS{C}$
\\\hline
Remarks
&
$0=i_{0}<i_{1}<\dots<i_{k}<p$\\
\hline
\end{tabular}
\end{center}
\begin{center}
\begin{tabular}{|>{\PBS\centering}p{\LENGTHTHETA}|>{\PBS\centering}p{\LENGTHH}|}
\multicolumn{2}{l}{
$(\MF{g},\MF{h})=(\MF{so}^{*}(2(2n+1))+\MF{so}^{*}(2(2n+1)),\MF{so}^{*}(2(2n+1)))$
}\\
\hline
$\Theta(\subset \varPsi)$ &
$\varPsi\setminus\{e_{i_{1}}-e_{i_{1}+1},\ldots, e_{i_{k-1}}-e_{i_{k-1}+1},e_{n}\}$
\\\hline
$\MF{h}_{\Theta}$& 
$\BS{R}^{k}+\MF{so}(2)+\displaystyle{\sum^{k}_{l=1}}\MF{su}^{*}(2(i_{l}-i_{l-1}))$
\\\hline
Remarks
&
$0=i_{0}<i_{1}<\cdots<i_{k}=n$
\\
\hline
\hline
$\Theta(\subset \varPsi)$ &
$\varPsi\setminus\{e_{i_{1}}-e_{i_{1}+1},\ldots,e_{i_{k}}-e_{i_{k}+1}\}$
\\\hline
$\MF{h}_{\Theta}$&
$\BS{R}^{k}+\displaystyle{\sum^{k}_{l=1}}\MF{su}^{*}(2(i_{l}-i_{l-1}))+\MF{so}^{*}(2(2(n-i_{k})+1))$
\\\hline
Remarks
&
$0=i_{0}<i_{1}<\cdots<i_{k}<n$
\\
\hline
\end{tabular}
\end{center}
\begin{center}
\begin{tabular}{|>{\PBS\centering}p{\LENGTHTHETA}|>{\PBS\centering}p{\LENGTHH}|}
\multicolumn{2}{l}{
$(\MF{g},\MF{h})=(\MF{so}(2(2n+1),\BS{C}),\MF{sl}(2n+1,\BS{C})+\BS{C})$
}\\
\hline
$\Theta(\subset \varPsi)$ &
$\varPsi\setminus\{e_{i_{1}}-e_{i_{1}+1},\ldots, e_{i_{k-1}}-e_{i_{k-1}+1},e_{n}\}$
\\\hline
$\MF{h}_{\Theta}$
& 
$\BS{C}+\displaystyle{\sum^{k}_{l=1}}\MF{sp}(i_{l}-i_{l-1},\BS{C})$
\\\hline
Remarks
&
$0=i_{0}<i_{1}<\cdots<i_{k}=n$
\\
\hline
\hline
$\Theta(\subset \varPsi)$ &
$\varPsi\setminus\{e_{i_{1}}-e_{i_{1}+1},\ldots,e_{i_{k}}-e_{i_{k}+1}\}$
\\\hline
$\MF{h}_{\Theta}$
&
$\displaystyle{\sum^{k}_{l=1}}\MF{sp}(i_{l}-i_{l-1},\BS{C})+\MF{sl}(2(n-i_{k})+1,\BS{C})+\BS{C}$
\\\hline
Remarks
&
$0=i_{0}<i_{1}<\cdots<i_{k}<n$
\\
\hline
\end{tabular}
\end{center}
\end{table}

\begin{table}[htbp]
\footnotesize
\contcaption{(continued)}
\begin{center}
\begin{tabular}{|>{\PBS\centering}p{\LENGTHTHETA}|>{\PBS\centering}p{\LENGTHH}|}
\multicolumn{2}{l}{
$(\MF{g},\MF{h})=(\MF{sp}(p,n-p)+\MF{sp}(p,n-p),\MF{sp}(p,n-p))\,(n>2p)$
}\\
\hline
$\Theta(\subset \varPsi)$ &
$\varPsi\setminus\{e_{i_{1}}-e_{i_{1}+1},\ldots, e_{i_{k-1}}-e_{i_{k-1}+1},e_{p}\}$
\\\hline
$\MF{h}_{\Theta}$& 
$\BS{R}^{k}+\displaystyle{\sum^{k}_{l=1}}\MF{su}^{*}(2(i_{l}-i_{l-1}))+\MF{sp}(n-2p)$
\\\hline
Remarks
&
$0=i_{0}<i_{1}<\cdots<i_{k}= p$
\\
\hline
\hline
$\Theta(\subset \varPsi)$ &
$\varPsi\setminus\{e_{i_{1}}-e_{i_{1}+1},\ldots, e_{i_{k-1}}-e_{i_{k-1}+1},e_{p}\}$
\\\hline
$\MF{h}_{\Theta}$& 
$\BS{R}^{k}+\displaystyle{\sum^{k}_{l=1}}\MF{su}^{*}(2(i_{l}-i_{l-1}))+\MF{sp}(p-i_{k},n-p-i_{k})$
\\\hline
Remarks
&
$0=i_{0}<i_{1}<\cdots<i_{k}< p$
\\
\hline
\end{tabular}
\end{center}
\begin{center}
\begin{tabular}{|>{\PBS\centering}p{\LENGTHTHETA}|>{\PBS\centering}p{\LENGTHH}|}
\multicolumn{2}{l}{
$(\MF{g},\MF{h})=(\MF{sp}(n,\BS{C}),\MF{sp}(p,\BS{C})+\MF{sp}(n-p,\BS{C}))\,(n>2p)$
}\\
\hline
$\Theta(\subset \varPsi)$ &
$\varPsi\setminus\{e_{i_{1}}-e_{i_{1}+1},\ldots,
e_{i_{k-1}}-e_{i_{k-1}+1},2e_{n}\}$
\\\hline
$\MF{h}_{\Theta}$
& 
$\displaystyle{\sum^{k}_{l=1}}\MF{sp}(i_{l}-i_{l-1},\BS{C})+\MF{sp}(p-i_{k},\BS{C})+\MF{sp}(n-2p,\BS{C})$
\\\hline
Remarks
&
$0=i_{0}<i_{1}<\cdots<i_{k}= p$
\\
\hline
\hline
$\Theta(\subset \varPsi)$ &
$\varPsi\setminus\{e_{i_{1}}-e_{i_{1}+1},\ldots,e_{i_{k}}-e_{i_{k}+1}\}$
\\\hline
$\MF{h}_{\Theta}$
& 
$\displaystyle{\sum^{k}_{l=1}}\MF{sp}(i_{l}-i_{l-1},\BS{C})+\MF{sp}(p-i_{k},\BS{C})+\MF{sp}(n-p-i_{k},\BS{C})$
\\\hline
Remarks
&
$0=i_{0}<i_{1}<\cdots<i_{k}< p$
\\
\hline
\end{tabular}
\end{center}
\begin{center}
\begin{tabular}{|>{\PBS\centering}p{\LENGTHTHETA}|>{\PBS\centering}p{\LENGTHH}|}
\multicolumn{2}{l}{
$(\MF{g},\MF{h})=(\MF{su}(2p,2(n-p)),\MF{sp}(p,n-p))\,(n>2p)$
}\\
\hline
$\Theta(\subset \varPsi)$ &
$\varPsi\setminus\{e_{i_{1}}-e_{i_{1}+1},\ldots, e_{i_{k-1}}-e_{i_{k-1}+1},e_{p}\}$
\\\hline
$\MF{h}_{\Theta}$& 
$\displaystyle{\sum^{k}_{l=1}}\MF{sp}(i_{l}-i_{l-1},\BS{C})+\MF{sp}(n-2p)$
\\\hline
Remarks
&
$0=i_{0}<i_{1}<\cdots<i_{k}= p$
\\
\hline
\hline
$\Theta(\subset \varPsi)$ &
$\varPsi\setminus\{e_{i_{1}}-e_{i_{1}+1},\ldots,e_{i_{k}}-e_{i_{k}+1}\}$
\\\hline
$\MF{h}_{\Theta}$& 
$\displaystyle{\sum^{k}_{l=1}}\MF{sp}(i_{l}-i_{l-1},\BS{C})+\MF{sp}(p-i_{k},n-p-i_{k})$
\\\hline
Remarks
&
$0=i_{0}<i_{1}<\cdots<i_{k}< p$
\\
\hline
\end{tabular}
\end{center}
\begin{center}
\begin{tabular}{|>{\PBS\centering}p{\LENGTHTHETA}|>{\PBS\centering}p{\LENGTHH}|}
\multicolumn{2}{l}{
$(\MF{g},\MF{h})=(\MF{su}^{*}(2n),\MF{su}^{*}(2p)+\MF{su}^{*}(2(n-p))+\BS{R})\,(n>2p)$
}\\
\hline
$\Theta(\subset \varPsi)$ &
$\varPsi\setminus\{e_{i_{1}}-e_{i_{1}+1},\ldots, e_{i_{k-1}}-e_{i_{k-1}+1},e_{p}\}$
\\\hline
$\MF{h}_{\Theta}$
& 
$\BS{R}^{k}+\displaystyle{\sum^{k}_{l=1}}\MF{su}^{*}(2(i_{l}-i_{l-1}))+\MF{su}^{*}(2(n-2p))$
\\\hline
Remarks
&
$0=i_{0}<i_{1}<\cdots<i_{k}=p$
\\
\hline
\hline
$\Theta(\subset \varPsi)$&
$\varPsi\setminus\{e_{i_{1}}-e_{i_{1}+1},\ldots,e_{i_{k}}-e_{i_{k}+1}\}$
\\\hline
$\MF{h}_{\Theta}$
&
$\BS{R}^{k}+\displaystyle{\sum^{k}_{l=1}}\MF{su}^{*}(2(i_{l}-i_{l-1}))+\MF{su}^{*}(2(p-i_{k}))+\MF{su}^{*}(2(n-p-i_{k}))+\BS{R}$
\\\hline
Remarks
&
$0=i_{0}<i_{1}<\cdots<i_{k}<p$
\\
\hline
\end{tabular}
\end{center}
\end{table}

\begin{table}[htbp]
\footnotesize
\contcaption{(continued)}
\begin{center}
\begin{tabular}{|>{\PBS\centering}p{\LENGTHTHETA}|>{\PBS\centering}p{\LENGTHH}|}
\multicolumn{2}{l}{
$(\MF{g},\MF{h})=(\MF{su}^{*}(2(2n+1)),\MF{sl}(2n+1,\BS{C})+\MF{so}(2))$
}\\
\hline
$\Theta(\subset \varPsi)$ &
$\varPsi\setminus\{e_{i_{1}}-e_{i_{1}+1},\ldots, e_{i_{k-1}}-e_{i_{k-1}+1},e_{n}\}$
\\\hline
$\MF{h}_{\Theta}$
& 
$\BS{R}^{k}+\MF{so}(2)+\displaystyle{\sum^{k}_{l=1}}\MF{su}^{*}(2(i_{l}-i_{l-1}))$
\\\hline
Remarks
&
$0=i_{0}<i_{1}<\cdots<i_{k}=n$
\\
\hline
\hline
$\Theta(\subset \varPsi)$ &
$\varPsi\setminus\{e_{i_{1}}-e_{i_{1}+1},\ldots,e_{i_{k}}-e_{i_{k}+1}\}$
\\\hline
$\MF{h}_{\Theta}$
&
$\BS{R}^{k}+\MF{so}(2)+\displaystyle{\sum^{k}_{l=1}}\MF{su}^{*}(2(i_{l}-i_{l-1}))+\MF{sl}(2(n-i_{k})+1,\BS{C})+\MF{so}(2)$
\\\hline
Remarks
&
$0=i_{0}<i_{1}<\cdots<i_{k}<n$
\\
\hline
\end{tabular}
\end{center}
\begin{center}
\begin{tabular}{|>{\PBS\centering}p{\LENGTHTHETA}|>{\PBS\centering}p{\LENGTHH}|}
\multicolumn{2}{l}{
$(\MF{g},\MF{h})=(\MF{su}(2n+1,2n+1),\MF{sp}(2n+1,\BS{R}))$
}\\
\hline
$\Theta(\subset \varPsi)$ &
$\varPsi\setminus\{e_{i_{1}}-e_{i_{1}+1},\ldots,
e_{i_{k-1}}-e_{i_{k-1}+1},e_{n}
\}$
\\\hline
$\MF{h}_{\Theta}$
& 
$\displaystyle{\sum^{k}_{l=1}}\MF{sp}(i_{l}-i_{l-1},\BS{C})$
\\\hline
Remarks
&
$0=i_{0}<i_{1}<\cdots<i_{k}= n$
\\
\hline
\hline
$\Theta(\subset \varPsi)$ &
$\varPsi\setminus\{e_{i_{1}}-e_{i_{1}+1},\ldots,e_{i_{k}}-e_{i_{k}+1}\}$
\\\hline
$\MF{h}_{\Theta}$& 
$\displaystyle{\sum^{k}_{l=1}}\MF{sp}(i_{l}-i_{l-1},\BS{C})+\MF{sp}(2(n-i_{k})+1,\BS{R})$
\\\hline
Remarks
&
$0=i_{0}<i_{1}<\cdots<i_{k}< n$
\\
\hline
\end{tabular}
\end{center}
\begin{center}
\begin{tabular}{|>{\PBS\centering}p{\LENGTHTHETA}|>{\PBS\centering}p{\LENGTHH}|}
\multicolumn{2}{l}{
$(\MF{g},\MF{h})=(\MF{so}(2p,2(n-p)),\MF{su}(p,n-p)+\MF{so}(2))\,(n>2p)$
}\\
\hline
$\Theta(\subset \varPsi)$ &
$\varPsi\setminus\{e_{i_{1}}-e_{i_{1}+1},\ldots, e_{i_{k-1}}-e_{i_{k-1}+1},e_{p}\}$
\\\hline
$\MF{h}_{\Theta}$
& 
$\displaystyle{\sum^{k}_{l=1}}\MF{sp}(i_{l}-i_{l-1},\BS{R})$
\\\hline
Remarks
&
$0=i_{0}<i_{1}<\cdots<i_{k} = p$
\\
\hline
\hline
$\Theta(\subset \varPsi)$ &
$\varPsi\setminus\{e_{i_{1}}-e_{i_{1}+1},\ldots,e_{i_{k}}-e_{i_{k}+1}\}$
\\\hline
$\MF{h}_{\Theta}$& 
$\displaystyle{\sum^{k}_{l=1}}\MF{sp}(i_{l}-i_{l-1},\BS{R})+\MF{su}(p-i_{k},n-p-i_{k})+\MF{so}(2)$
\\\hline
Remarks
&
$0=i_{0}<i_{1}<\cdots<i_{k}< p$
\\
\hline
\end{tabular}
\end{center}
\begin{center}
\begin{tabular}{|>{\PBS\centering}p{\LENGTHTHETA}|>{\PBS\centering}p{\LENGTHH}|}
\multicolumn{2}{l}{
$(\MF{g},\MF{h})=(\MF{so}^{*}(2n),\MF{so}^{*}(2p)+\MF{so}^{*}(2(n-p)))\,(n>2p)$
}\\
\hline
$\Theta(\subset \varPsi)$ &
$\varPsi\setminus\{e_{i_{1}}-e_{i_{1}+1},\ldots,
e_{i_{k-1}}-e_{i_{k-1}+1},e_{p}
\}$
\\\hline
$\MF{h}_{\Theta}$
& 
$\displaystyle{\sum^{k}_{l=1}}\MF{so}^{*}(2(i_{l}-i_{l-1}))$
\\\hline
Remarks
&
$0=i_{0}<i_{1}<\cdots<i_{k}= p$
\\
\hline
\hline
$\Theta(\subset \varPsi)$ &
$\varPsi\setminus\{e_{i_{1}}-e_{i_{1}+1},\ldots,e_{i_{k}}-e_{i_{k}+1}\}$
\\\hline
$\MF{h}_{\Theta}$
& 
$\displaystyle{\sum^{k}_{l=1}}\MF{so}^{*}(2(i_{l}-i_{l-1}))+\MF{so}^{*}(2(p-i_{k}))+\MF{so}^{*}(2(n-p-i_{k}))$
\\\hline
Remarks
&
$0=i_{0}<i_{1}<\cdots<i_{k}< p$
\\
\hline
\end{tabular}
\end{center}
\end{table}

\begin{table}[htbp]
\footnotesize
\contcaption{(continued)}
\begin{center}
\begin{tabular}{|>{\PBS\centering}p{\LENGTHTHETA}|>{\PBS\centering}p{\LENGTHH}|}
\multicolumn{2}{l}{
$(\MF{g},\MF{h})=(\MF{sp}(p,n-p),\MF{su}(p,n-p)+\MF{so}(2))\,(n>2p)$
}\\
\hline
$\Theta(\subset \varPsi)$ &
$\varPsi\setminus\{e_{i_{1}}-e_{i_{1}+1},\ldots, e_{i_{k-1}}-e_{i_{k-1}+1},e_{p}\}$
\\\hline
$\MF{h}_{\Theta}$
& 
$\MF{u}(n-2p)+\displaystyle{\sum^{k}_{l=1}}\MF{so}^{*}(2(i_{l}-i_{l-1}))$
\\\hline
Remarks
&
$0=i_{0}<i_{1}<\cdots<i_{k}=p$
\\
\hline
\hline
$\Theta(\subset \varPsi)$ &
$\varPsi\setminus\{e_{i_{1}}-e_{i_{1}+1},\ldots,e_{i_{k}}-e_{i_{k}+1}\}$
\\\hline
$\MF{h}_{\Theta}$
&
$\displaystyle{\sum^{k}_{l=1}}\MF{so}(2(i_{l}-i_{l-1}))+\MF{su}(p-i_{k},n-p-i_{k})+\MF{so}(2)$
\\\hline
Remarks
&
$0=i_{0}<i_{1}<\cdots<i_{k}<p$
\\
\hline
\end{tabular}
\end{center}
\begin{center}
\begin{tabular}{|>{\PBS\centering}p{\LENGTHTHETA}|>{\PBS\centering}p{\LENGTHH}|}
\multicolumn{2}{l}{
$(\MF{g},\MF{h})=(\MF{sp}(n,\BS{R}),\MF{sp}(p,\BS{R})+\MF{sp}(n-p,\BS{R}))\,(n>2p)$
}\\
\hline
$\Theta(\subset \varPsi)$ &
$\varPsi\setminus\{e_{i_{1}}-e_{i_{1}+1},\ldots, e_{i_{k-1}}-e_{i_{k-1}+1},e_{p}\}$
\\\hline
$\MF{h}_{\Theta}$
& 
$\displaystyle{\sum^{k}_{l=1}}\MF{sp}(i_{l}-i_{l-1},\BS{R})+\MF{sp}(n-2p,\BS{R})$
\\\hline
Remarks
&
$0=i_{0}<i_{1}<\cdots<i_{k}= p$
\\
\hline
\hline
$\Theta(\subset \varPsi)$ &
$\varPsi\setminus\{e_{i_{1}}-e_{i_{1}+1},\ldots,e_{i_{k}}-e_{i_{k}+1}\}$
\\\hline
$\MF{h}_{\Theta}$
& 
$\displaystyle{\sum^{k}_{l=1}}\MF{sp}(i_{l}-i_{l-1},\BS{R})+\MF{sp}(p-i_{k},\BS{R})+\MF{sp}(n-p-i_{k},\BS{R})$
\\\hline
Remarks
&
$0=i_{0}<i_{1}<\cdots<i_{k}< p$
\\
\hline
\end{tabular}
\end{center}
\medskip
\begin{flushleft}
(VI) $\Delta=(BC)_{r},\,\Delta^{a}=B_{r},\,\varPsi=\{e_{i}-e_{i+1}\mid 1 \leq i \leq r-1\}\cup\{e_{r}\}$
\end{flushleft}
\begin{center}
\begin{tabular}{|>{\PBS\centering}p{\LENGTHTHETA}|>{\PBS\centering}p{\LENGTHH}|}
\multicolumn{2}{l}{
$(\MF{g},\MF{h})=(\MF{sl}(2n+1,\BS{C}),\MF{sl}(2n+1,\BS{R}))$
}\\
\hline
$\Theta(\subset \varPsi)$&
$\varPsi\setminus\{e_{i_{1}}-e_{i_{1}+1},\ldots,
e_{i_{k-1}}-e_{i_{k-1}+1},e_{n}
\}$
\\\hline
$\MF{h}_{\Theta}$
&
$\displaystyle{\BS{R}^{k}+\MF{so}(2)^{k}+\sum^{k}_{l=1}}\MF{sl}(i_{l}-i_{l-1},\BS{C})$
\\\hline
Remarks
&$0=i_{0}<i_{1}<\dots<i_{k}= n$\\
\hline
\hline
$\Theta(\subset \varPsi)$ &
$\varPsi\setminus\{e_{i_{1}}-e_{i_{1}+1},\ldots,e_{i_{k}}-e_{i_{k}+1}\}$
\\\hline
$\MF{h}_{\Theta}$&
$\BS{R}^{k}+\MF{so}(2)^{k}+\displaystyle{\sum^{k}_{l=1}}\MF{sl}(i_{l}-i_{l-1},\BS{C})+\MF{sl}(2(n-i_{k})+1,\BS{R})$
\\\hline
Remarks
&$0=i_{0}<i_{1}<\dots<i_{k}<n$\\
\hline
\end{tabular}
\end{center}
\begin{center}
\begin{tabular}{|>{\PBS\centering}p{\LENGTHTHETA}|>{\PBS\centering}p{\LENGTHH}|}
\multicolumn{2}{l}{
$(\MF{g},\MF{h})=(\MF{su}(p,n-p),\MF{so}(p,n-p))$
}\\
\hline
$\Theta(\subset \varPsi)$&
$\varPsi\setminus\{e_{i_{1}}-e_{i_{1}+1},\ldots, e_{i_{k-1}}-e_{i_{k-1}+1},e_{p}\}$
\\\hline
$\MF{h}_{\Theta}$& 
$\displaystyle{\sum_{l=1}^{k}\MF{so}(i_{l}-i_{l-1},\BS{C})+\MF{so}(n-2p)}$
\\\hline
Remarks
&
$0=i_{0}<i_{1}<\cdots<i_{k} = p$
\\
\hline
\hline
$\Theta(\subset \varPsi)$ &
$\varPsi\setminus\{e_{i_{1}}-e_{i_{1}+1},\ldots,e_{i_{k}}-e_{i_{k}+1}\}$
\\\hline
$\MF{h}_{\Theta}$& 
$\displaystyle{\sum_{l=1}^{k}\MF{so}(i_{l}-i_{l-1},\BS{C})+\MF{so}(p-i_{k},n-p-i_{k})}$
\\\hline
Remarks &
$0=i_{0}<i_{1}<\cdots<i_{k} < p$
\\
\hline
\end{tabular}
\end{center}
\end{table}

\begin{table}[htbp]
\footnotesize
\contcaption{(continued)}
\begin{center}
\begin{tabular}{|>{\PBS\centering}p{\LENGTHTHETA}|>{\PBS\centering}p{\LENGTHH}|}
\multicolumn{2}{l}{
$(\MF{g},\MF{h})=(\MF{sl}(n,\BS{R}),\MF{sl}(p,\BS{R})+\MF{sl}(n-p,\BS{R})+\BS{R})\,(n>2p)$
}\\
\hline
$\Theta(\subset \varPsi)$ &
$\varPsi\setminus\{e_{i_{1}}-e_{i_{1}+1},\ldots, e_{i_{k-1}}-e_{i_{k-1}+1},e_{p}\}$
\\\hline
$\MF{h}_{\Theta}$& 
$\BS{R}^{k}+\displaystyle{\sum^{k}_{l=1}}\MF{sl}(i_{l}-i_{l-1},\BS{R})+\MF{sl}(n-2p,\BS{R})$
\\\hline
Remarks&
$0=i_{0}<i_{1}<\cdots<i_{k}=p$
\\
\hline
\hline
$\Theta(\subset \varPsi)$ &
$\varPsi\setminus\{e_{i_{1}}-e_{i_{1}+1},\ldots,e_{i_{k}}-e_{i_{k}+1}\}$
\\\hline
$\MF{h}_{\Theta}$&
$\BS{R}^{k}+\displaystyle{\sum^{k}_{l=1}}\MF{sl}(i_{l}-i_{l-1},\BS{R})+\MF{sl}(p-i_{k},\BS{R})+\MF{sl}(n-p-i_{k},\BS{R})+\BS{R}$
\\\hline
Remarks &
$0=i_{0}<i_{1}<\cdots<i_{k}<p$
\\
\hline
\end{tabular}
\end{center}
\begin{center}
\begin{tabular}{|>{\PBS\centering}p{\LENGTHTHETA}|>{\PBS\centering}p{\LENGTHH}|}
\multicolumn{2}{l}{
$(\MF{g},\MF{h})=(\MF{so}^{*}(2(2n+1)),\MF{so}(2n+1,\BS{C}))$
}\\
\hline
$\Theta(\subset \varPsi)$ &
$\varPsi\setminus\{e_{i_{1}}-e_{i_{1}+1},\ldots, e_{i_{k-1}}-e_{i_{k-1}+1},e_{n}\}$
\\\hline
$\MF{h}_{\Theta}$
& 
$\displaystyle{\sum^{k}_{l=1}}\MF{so}^{*}(2(i_{l}-i_{l-1}))$
\\\hline
Remarks
&
$0=i_{0}<i_{1}<\cdots<i_{k}= n$
\\
\hline
\hline
$\Theta(\subset \varPsi)$ &
$\varPsi\setminus\{e_{i_{1}}-e_{i_{1}+1},\ldots,e_{i_{k}}-e_{i_{k}+1}\}$
\\\hline
$\MF{h}_{\Theta}$ & 
$\displaystyle{\sum^{k}_{l=1}}\MF{so}^{*}(2(i_{l}-i_{l-1}))+\MF{so}(2(n-i_{k})+1,\BS{C})$
\\\hline
Remarks
&
$0=i_{0}<i_{1}<\cdots<i_{k}< n$
\\
\hline
\end{tabular}
\end{center}
\begin{center}
\begin{tabular}{|>{\PBS\centering}p{\LENGTHTHETA}|>{\PBS\centering}p{\LENGTHH}|}
\multicolumn{2}{l}{
$(\MF{g},\MF{h})=(\MF{so}(2n+1,2n+1),\MF{sl}(2n+1,\BS{R})+\BS{R})$
}\\
\hline
$\Theta(\subset \varPsi)$&
$\varPsi\setminus\{e_{i_{1}}-e_{i_{1}+1},\ldots, e_{i_{k-1}}-e_{i_{k-1}+1},e_{n}\}$
\\\hline
$\MF{h}_{\Theta}$
& 
$\BS{R}+\displaystyle{\sum^{k}_{l=1}}\MF{sp}(i_{l}-i_{l-1},\BS{R})$
\\\hline
Remarks
&
$0=i_{0}<i_{1}<\cdots<i_{k}=n$
\\
\hline
\hline
$\Theta(\subset \varPsi)$&
$\varPsi\setminus\{e_{i_{1}}-e_{i_{1}+1},\ldots,e_{i_{k}}-e_{i_{k}+1}\}$
\\\hline
$\MF{h}_{\Theta}$&
$\displaystyle{\sum^{k}_{l=1}}\MF{sp}(i_{l}-i_{l-1},\BS{R})+\MF{sl}(2(n-i_{k})+1,\BS{R})+\BS{R}$
\\\hline
Remarks &
$0=i_{0}<i_{1}<\cdots<i_{k}<n$
\\
\hline
\end{tabular}
\end{center}
\end{table}

\begin{Ex}\label{ex.local2}
Let $G/H$ be a semisimple pseudo-Riemannian symmetric space.
Suppose that the restricted root system $\Delta$
with respect to a vector-type maximal split abelian subspace
satisfies $(\Delta, \Delta^{a})=(C_{r}, D_{r})$.
By using the argument as in Subsection \ref{subsec.CD}
and Theorem \ref{thm.main},
we have $\MC{L}_{h}(G/H)=\{[\MF{h}_{\Theta}]\mid \Theta \subset \varPsi\}
\cup\{[\MF{h}_{\Theta}]\mid \Theta \subset s_{2e_{r}}\cdot\varPsi\}$,
where $\varPsi=\{e_{i}-e_{i+1}\mid 1\leq i\leq r-1\}\cup\{2e_{r}\}$.
In Table \ref{table.localtable.2},
we list up the set of all the possible local orbit types
of the hyperbolic orbits for the s-representations
associated with
all classical-type semisimple pseudo-Riemannian symmetric spaces
satisfying $(\Delta,\Delta^{a})=(C_{r}, D_{r})$.
\end{Ex}

\begin{table}[htbp]
\footnotesize
\caption{Local orbit types}\label{table.localtable.2}
\begin{flushleft}
$\Delta=C_{r},\,\Delta^{a}=D_{r},\,\varPsi=\{e_{i}-e_{i+1}\mid 1 \leq i \leq r-1\}\cup \{2e_{r}\}$
\end{flushleft}
\begin{center}
\begin{tabular}{|>{\PBS\centering}p{\LENGTHTHETA}|>{\PBS\centering}p{\LENGTHH}|}
\multicolumn{2}{l}{
$(\MF{g},\MF{h})=(\MF{sl}(2n,\BS{C}),\MF{sl}(2n,\BS{R}))$
}\\
\hline
$\Theta(\subset \varPsi)$ &
$\varPsi\setminus\{e_{i_{1}}-e_{i_{1}+1},\ldots,
e_{i_{k-1}}-e_{i_{k-1}+1},2e_{n}
\}$
\\\hline
$\MF{h}_{\Theta}$
&
$\BS{R}^{k-1}+\MF{so}(2)^{k}+\displaystyle{\sum^{k}_{l=1}}\MF{sl}(i_{l}-i_{l-1},\BS{C})$\\\hline
Remarks
&$0=i_{0}<i_{1}<\dots<i_{k}= n$\\
\hline
\hline
$\Theta(\subset \varPsi)$ &
$\varPsi\setminus\{e_{i_{1}}-e_{i_{1}+1},\ldots,e_{i_{k}}-e_{i_{k}+1}\}$
\\\hline
$\MF{h}_{\Theta}$ &
$\BS{R}^{k}+\MF{so}(2)^{k}+\displaystyle{\sum^{k}_{l=1}}\MF{sl}(i_{l}-i_{l-1},\BS{C})+\MF{sl}(2(n-i_{k}),\BS{R})$
\\\hline
Remarks
&$0=i_{0}<i_{1}<\dots<i_{k}<n$\\
\hline
\hline
$\Theta(\subset s_{2e_{n}}\varPsi)$ &
$s_{2e_{n}}\left(\varPsi\setminus\{e_{i_{1}}-e_{i_{1}+1},\ldots,
e_{i_{k-1}}-e_{i_{k-1}+1},2e_{n}
\}\right)$
\\\hline
$\MF{h}_{\Theta}$
&
$\BS{R}^{k-1}+\MF{so}(2)^{k}+\displaystyle{\sum^{k}_{l=1}}\MF{sl}(i_{l}-i_{l-1},\BS{C})$\\\hline
Remarks&
$0=i_{0}<i_{1}<\dots<i_{k}= n$\\
\hline
\hline
$\Theta(\subset s_{2e_{n}}\varPsi)$ &
$s_{2e_{n}}\left(\varPsi\setminus\{e_{i_{1}}-e_{i_{1}+1},\ldots,e_{i_{k}}-e_{i_{k}+1}\}\right)$
\\\hline
$\MF{h}_{\Theta}$
&
$\BS{R}^{k}+\MF{so}(2)^{k}+\displaystyle{\sum^{k}_{l=1}}\MF{sl}(i_{l}-i_{l-1},\BS{C})+\MF{sl}(2(n-i_{k}),\BS{R})$
\\\hline
Remarks
&$0=i_{0}<i_{1}<\dots<i_{k}<n$\\
\hline
\end{tabular}
\end{center}
\begin{center}
\begin{tabular}{|>{\PBS\centering}p{\LENGTHTHETA}|>{\PBS\centering}p{\LENGTHH}|}
\multicolumn{2}{l}{
$(\MF{g},\MF{h})=(\MF{su}(n,n),\MF{so}(n,n))$
}\\
\hline
$\Theta(\subset \varPsi)$ &
$\varPsi\setminus\{e_{i_{1}}-e_{i_{1}+1},\ldots, e_{i_{k-1}}-e_{i_{k-1}+1},2e_{n}\}$
\\\hline
$\MF{h}_{\Theta}$
& 
$\displaystyle{\sum_{l=1}^{k}\MF{so}(i_{l}-i_{l-1},\BS{C})}$
\\\hline
Remarks
&
$0=i_{0}<i_{1}<\cdots<i_{k} = n$
\\
\hline
\hline
$\Theta(\subset \varPsi)$&
$\varPsi\setminus\{e_{i_{1}}-e_{i_{1}+1},\ldots,e_{i_{k}}-e_{i_{k}+1}\}$
\\\hline
$\MF{h}_{\Theta}$
& 
$\displaystyle{\sum_{l=1}^{k}\MF{so}(i_{l}-i_{l-1},\BS{C})+\MF{so}(n-i_{k},n-i_{k})}$
\\\hline
Remarks
&
$0=i_{0}<i_{1}<\cdots<i_{k} < n$
\\
\hline
\hline
$\Theta(\subset s_{2e_{n}}\varPsi)$ &
$s_{2e_{n}}\left(\varPsi\setminus\{e_{i_{1}}-e_{i_{1}+1},\ldots,
e_{i_{k-1}}-e_{i_{k-1}+1},2e_{n}
\}\right)$
\\\hline
$\MF{h}_{\Theta}$
&
$\displaystyle{\sum_{l=1}^{k}\MF{so}(i_{l}-i_{l-1},\BS{C})}$
\\\hline
Remarks
&$0=i_{0}<i_{1}<\dots<i_{k}= n$\\
\hline
\hline
$\Theta(\subset s_{2e_{n}}\varPsi)$ &
$s_{2e_{n}}\left(\varPsi\setminus\{e_{i_{1}}-e_{i_{1}+1},\ldots,e_{i_{k}}-e_{i_{k}+1}\}\right)$
\\\hline
$\MF{h}_{\Theta}$
&
$\displaystyle{\sum_{l=1}^{k}\MF{so}(i_{l}-i_{l-1},\BS{C})+\MF{so}(n-i_{k},n-i_{k})}$
\\\hline
Remarks
&$0=i_{0}<i_{1}<\dots<i_{k}<n$\\
\hline
\end{tabular}
\end{center}
\end{table}

\begin{table}[htbp]
\footnotesize
\contcaption{(continued)}
\begin{center}
\begin{tabular}{|>{\PBS\centering}p{\LENGTHTHETA}|>{\PBS\centering}p{\LENGTHH}|}
\multicolumn{2}{l}{
$(\MF{g},\MF{h})=(\MF{sl}(2n,\BS{R}),\MF{sl}(n,\BS{R})+\MF{sl}(n,\BS{R})+\BS{R})$
}\\
\hline
$\Theta(\subset \varPsi)$ &
$\varPsi\setminus\{e_{i_{1}}-e_{i_{1}+1},\ldots,
e_{i_{k-1}}-e_{i_{k-1}+1},2e_{n}
\}$
\\\hline
$\MF{h}_{\Theta}$& 
$\BS{R}^{k-1}+\displaystyle{\sum^{k}_{l=1}}\MF{sl}(i_{l}-i_{l-1},\BS{R})$
\\\hline
Remarks&
$0=i_{0}<i_{1}<\cdots<i_{k}=n$
\\
\hline
\hline
$\Theta(\subset \varPsi)$ &
$\varPsi\setminus\{e_{i_{1}}-e_{i_{1}+1},\ldots,e_{i_{k}}-e_{i_{k}+1}\}$
\\\hline
$\MF{h}_{\Theta}$&
$\BS{R}^{k}+\displaystyle{\sum^{k}_{l=1}}\MF{sl}(i_{l}-i_{l-1},\BS{R})+\MF{sl}(n-i_{k},\BS{R})^{2}+\BS{R}$
\\\hline
Remarks
&
$0=i_{0}<i_{1}<\cdots<i_{k}<n$
\\
\hline
\hline
$\Theta(\subset s_{2e_{n}}\varPsi)$ &
$s_{2e_{n}}\left(\varPsi\setminus\{e_{i_{1}}-e_{i_{1}+1},\ldots,
e_{i_{k-1}}-e_{i_{k-1}+1},2e_{n}
\}\right)$
\\\hline
$\MF{h}_{\Theta}$
& 
$\BS{R}^{k-1}+\displaystyle{\sum^{k}_{l=1}}\MF{sl}(i_{l}-i_{l-1},\BS{R})$
\\\hline
Remarks
&
$0=i_{0}<i_{1}<\cdots<i_{k}=n$
\\
\hline
\hline
$\Theta(\subset s_{2e_{n}}\varPsi)$ &
$s_{2e_{n}}\left(\varPsi\setminus\{e_{i_{1}}-e_{i_{1}+1},\ldots,e_{i_{k}}-e_{i_{k}+1}\}\right)$
\\\hline
$\MF{h}_{\Theta}$
&
$\BS{R}^{k}+\displaystyle{\sum^{k}_{l=1}}\MF{sl}(i_{l}-i_{l-1},\BS{R})+\MF{sl}(n-i_{k},\BS{R})^{2}+\BS{R}$
\\\hline
Remarks
&
$0=i_{0}<i_{1}<\cdots<i_{k}<n$
\\
\hline
\end{tabular}
\end{center}
\begin{center}
\begin{tabular}{|>{\PBS\centering}p{\LENGTHTHETA}|>{\PBS\centering}p{\LENGTHH}|}
\multicolumn{2}{l}{
$(\MF{g},\MF{h})=(\MF{so}^{*}(4n),\MF{so}(2n,\BS{C}))$
}\\
\hline
$\Theta(\subset \varPsi)$ &
$\varPsi\setminus\{e_{i_{1}}-e_{i_{1}+1},\ldots,e_{i_{k-1}}-e_{i_{k-1}+1},2e_{n}\}$
\\\hline
$\MF{h}_{\Theta}$& 
$\displaystyle{\sum^{k}_{l=1}}\MF{so}^{*}(2(i_{l}-i_{l-1}))$
\\\hline
Remarks
&
$0=i_{0}<i_{1}<\cdots<i_{k}= n$
\\
\hline
\hline
$\Theta(\subset \varPsi)$ &
$\varPsi\setminus\{e_{i_{1}}-e_{i_{1}+1},\ldots,e_{i_{k}}-e_{i_{k}+1}\}$
\\\hline
$\MF{h}_{\Theta}$& 
$\displaystyle{\sum^{k}_{l=1}}\MF{so}^{*}(2(i_{l}-i_{l-1}))+\MF{so}(2(n-i_{k}),\BS{C})$
\\\hline
Remarks&
$0=i_{0}<i_{1}<\cdots<i_{k}= n$
\\
\hline
\hline
$\Theta(\subset s_{2e_{n}}\varPsi)$ &
$s_{2e_{n}}\left(\varPsi\setminus\{e_{i_{1}}-e_{i_{1}+1},\ldots,e_{i_{k-1}}-e_{i_{k-1}+1},2e_{n}\}\right)$
\\\hline
$\MF{h}_{\Theta}$& 
$\displaystyle{\sum^{k}_{l=1}}\MF{so}^{*}(2(i_{l}-i_{l-1}))$
\\\hline
Remarks
&
$0=i_{0}<i_{1}<\cdots<i_{k}= n$
\\
\hline
\hline
$\Theta(\subset s_{2e_{n}}\varPsi)$ &
$s_{2e_{n}}\left(\varPsi\setminus\{e_{i_{1}}-e_{i_{1}+1},\ldots,e_{i_{k}}-e_{i_{k}+1}\}\right)$
\\\hline
$\MF{h}_{\Theta}$
& 
$\displaystyle{\sum^{k}_{l=1}}\MF{so}^{*}(2(i_{l}-i_{l-1}))+\MF{so}(2(n-i_{k}),\BS{C})$
\\\hline
Remarks
&
$0=i_{0}<i_{1}<\cdots<i_{k}= n$
\\
\hline
\end{tabular}
\end{center}
\vspace{10mm}
\end{table}

\begin{table}[htbp]
\footnotesize
\contcaption{(continued)}
\begin{center}
\begin{tabular}{|>{\PBS\centering}p{\LENGTHTHETA}|>{\PBS\centering}p{\LENGTHH}|}
\multicolumn{2}{l}{
$(\MF{g},\MF{h})=(\MF{so}(2n,2n),\MF{sl}(2n,\BS{R})+\BS{R})$
}\\
\hline
$\Theta(\subset \varPsi)$ &
$\varPsi\setminus\{e_{i_{1}}-e_{i_{1}+1},\ldots,
e_{i_{k-1}}-e_{i_{k-1}+1},2e_{n}\}$
\\\hline
$\MF{h}_{\Theta}$
& 
$\displaystyle{\sum^{k}_{l=1}}\MF{sp}(i_{l}-i_{l-1},\BS{R})$
\\\hline
Remarks
&
$0=i_{0}<i_{1}<\cdots<i_{k}=n$
\\
\hline
\hline
$\Theta(\subset \varPsi)$ &
$\varPsi\setminus\{e_{i_{1}}-e_{i_{1}+1},\ldots,e_{i_{k}}-e_{i_{k}+1}\}$
\\\hline
$\MF{h}_{\Theta}$
&
$\displaystyle{\sum^{k}_{l=1}}\MF{sp}(i_{l}-i_{l-1},\BS{R})+\MF{sl}(2(n-i_{k}),\BS{R})+\BS{R}$
\\\hline
Remarks
&
$0=i_{0}<i_{1}<\cdots<i_{k}<n$
\\
\hline
\hline
$\Theta(\subset s_{2e_{n}}\varPsi)$ &
$s_{2e_{n}}\left(\varPsi\setminus\{e_{i_{1}}-e_{i_{1}+1},\ldots,
e_{i_{k-1}}-e_{i_{k-1}+1},2e_{n}\}\right)$
\\\hline
$\MF{h}_{\Theta}$
& 
$\displaystyle{\sum^{k}_{l=1}}\MF{sp}(i_{l}-i_{l-1},\BS{R})$
\\\hline
Remarks
&
$0=i_{0}<i_{1}<\cdots<i_{k}=n$
\\\hline
\hline
$\Theta(\subset s_{2e_{n}}\varPsi)$ &
$s_{2e_{n}}\left(\varPsi\setminus\{e_{i_{1}}-e_{i_{1}+1},\ldots,e_{i_{k}}-e_{i_{k}+1}\}\right)$
\\\hline
$\MF{h}_{\Theta}$
&
$\displaystyle{\sum^{k}_{l=1}}\MF{sp}(i_{l}-i_{l-1},\BS{R})+\MF{sl}(2(n-i_{k}),\BS{R})+\BS{R}$
\\\hline
Remarks
&
$0=i_{0}<i_{1}<\cdots<i_{k}<n$
\\
\hline
\end{tabular}
\end{center}
\end{table}


\section{Local orbit types of the elliptic orbits}\label{Sec.ellip}

Let $(\MF{g},\MF{h})$ be a semisimple
symmetric pair and $\sigma$ be an involution of $\MF{g}$
with $\MF{h} = \OPE{Ker}(\sigma - \OPE{id})$.
Set $\MF{g}^{c}:=\MF{h}+\sqrt{-1}\MF{q} (\subset \MF{g}^{\BS{C}})$.
The symmetric pair $(\MF{g}^{c},\MF{h})$ is called 
the $c$-dual pair of $(\MF{g},\MF{h})$.
In this section, we discuss the relation between the isotropy subalgebras of elliptic orbits for the s-representation associated with $(\MF{g}, \MF{h})$ and
those of hyperbolic orbits for the s-representation associated with $(\MF{g}^{c},\MF{h})$.
Then we have the following fact.

\begin{Lem}\label{lem.tro1}
For any elliptic element $X \in \MF{q}$,
the centralizer of $X$ in $\MF{h}(\subset \MF{g})$ coincides with that of $\sqrt{-1}X$ in $\MF{h}(\subset \MF{g}^{c})$.
\end{Lem}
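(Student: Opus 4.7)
The plan is to unwind the definitions and observe that the only substantive ingredient is the $\BS{C}$-bilinearity of the Lie bracket of $\MF{g}^{\BS{C}}$. First I would note that both $\MF{g}$ and $\MF{g}^{c}=\MF{h}+\sqrt{-1}\MF{q}$ sit inside $\MF{g}^{\BS{C}}$ as real forms, and in each case the Lie bracket is simply the restriction of the bracket of $\MF{g}^{\BS{C}}$. In particular, for any $Y\in\MF{h}$ and any $X\in\MF{q}$, the element $\sqrt{-1}X$ lies in $\MF{g}^{c}$, and
\[
[Y,\sqrt{-1}X]_{\MF{g}^{c}}
=[Y,\sqrt{-1}X]_{\MF{g}^{\BS{C}}}
=\sqrt{-1}\,[Y,X]_{\MF{g}^{\BS{C}}}
=\sqrt{-1}\,[Y,X]_{\MF{g}}.
\]

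From here the conclusion is immediate: since $\sqrt{-1}$ acts as an $\BS{R}$-linear injection on $\MF{g}^{\BS{C}}$, the right-hand side vanishes if and only if $[Y,X]_{\MF{g}}=0$. Therefore
\[
\{Y\in\MF{h}\mid[Y,X]_{\MF{g}}=0\}
=\{Y\in\MF{h}\mid[Y,\sqrt{-1}X]_{\MF{g}^{c}}=0\},
\]
which is exactly the asserted equality of centralizers.

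There is essentially no obstacle in this argument; the ellipticity hypothesis on $X$ is not used in the equality itself, but only provides the setting of the discussion (the element $\sqrt{-1}X\in\sqrt{-1}\MF{q}\subset\MF{g}^{c}$ is hyperbolic in the $c$-dual pair $(\MF{g}^{c},\MF{h})$, since $\OPE{ad}_{\MF{g}^{c}}(\sqrt{-1}X)^{\BS{C}}=\sqrt{-1}\,\OPE{ad}_{\MF{g}}(X)^{\BS{C}}$ has real eigenvalues whenever $\OPE{ad}_{\MF{g}}(X)^{\BS{C}}$ has purely imaginary ones). This is the conceptual content of the lemma: under $X\mapsto \sqrt{-1}X$, elliptic orbits for the s-representation of $(\MF{g},\MF{h})$ correspond to hyperbolic orbits for the s-representation of $(\MF{g}^{c},\MF{h})$ with identical isotropy subalgebras, which is exactly the bridge needed to transfer the results of Sections \ref{Sec.proof}--\ref{Sec.det} from the hyperbolic to the elliptic setting.
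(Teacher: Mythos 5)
Your argument is correct and is exactly the one-line justification the paper implicitly relies on: the lemma is stated there without proof as an evident fact, and the identity $[Y,\sqrt{-1}X]=\sqrt{-1}[Y,X]$ in $\MF{g}^{\BS{C}}$ together with injectivity of multiplication by $\sqrt{-1}$ is all that is needed. Your closing remark correctly identifies the role of ellipticity (making $\sqrt{-1}X$ hyperbolic in $\MF{g}^{c}$), which is how the lemma is actually used in Section \ref{Sec.ellip}.
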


\noindent
From Lemma \ref{lem.tro1} we can determine the local orbit types of elliptic orbits for the s-representation of $(\MF{g},\MF{h})$ by investigating those of hyperbolic orbits for the s-representation of $(\MF{g}^{c},\MF{h})$.
For example, in the case of $(\MF{g},\MF{h})=(\MF{sl}(n,\BS{R})+\MF{sl}(n,\BS{R}),\MF{sl}(n,\BS{R}))$,
we have $(\MF{g}^{c},\MF{h})=(\MF{sl}(n,\BS{C}),\MF{sl}(n,\BS{R}))$.
Then it follows from the above argument that the elliptic principal isotropy subalgebra for the s-representation associated with $(\MF{sl}(n,\BS{R})+\MF{sl}(n,\BS{R}),\MF{sl}(n,\BS{R}))$ coincides with $\BS{R}^{[(n-1)/2]}+\MF{so}(2)^{[n/2]}$ for any $n \in \BS{N}$ (see, Table \ref{table.hprin}).
In the case of $n=4$,
this result was shown by Boumuki (PROPOSITION 5.1 in \cite{MR2370009}).
He actually determined all the isotropy subalgebras of elliptic orbits
for the s-representation associated with $(\MF{sl}(4,\BS{R})+\MF{sl}(4,\BS{R}),\MF{sl}(4,\BS{R}))$.
Our method of determining the isotropy subalgebras depends on restricted root system theory for semisimple symmetric pairs,
and is different from Boumuki's method, which depends on root system theory for semisimple complex Lie algebras and that for compact Lie algebras.

\medskip

\noindent
{\bf ACKNOWLEDGEMENTS.} The author would like to express his sincere gratitude
to Professor Naoyuki Koike for valuable discussions and valuable comments.

\vspace{\baselineskip}

\begin{flushleft}
{\scriptsize
Kurando BABA\\
Department of General Education,
Fukushima National College of Technology,\\
30 Nagao, Kamiarakawa, Taira, Iwaki, Fukushima 970-8034, Japan\\
E-mail: baba@fukushima-nct.ac.jp
}
\end{flushleft}


\begin{thebibliography}{99}

\def\cprime{$'$}

\bibitem{B}
   K.\ Baba,
   \textit{Local orbit types of s-representations for exceptional semisimple symmetric spaces},
   SUT J.\ Math.\ {\bf 44} (2008), 307--328.

\bibitem{B2}
   K.\ Baba,
   \textit{Satake Diagrams and Restricted Root Systems of Semisimple Pseudo-Riemannian Symmetric Spaces},
     Tokyo J.\ Math.\ {\bf 32} (2009), 127--158.


\bibitem{MR0104763}
   M.\ Berger,
   \textit{Les espaces sym\'etriques noncompacts},
 Ann.\ Sci.\ \'Ecole Norm.\allowbreak\ Sup.\ {\bf 74} (1957), 85--177.

\bibitem{MR2370009}
   N.\ Boumuki,
   \textit{Isotropy subalgebras of elliptic orbits in semisimple Lie
   algebras, and the canonical representatives of pseudo-Hermitian symmetric
   elliptic orbits},
   J.\ Math.\ Soc.\ Japan,
   {\bf 59},
   (2007),
   1135--1177.

\bibitem{MR1189917}
   E.\ Heintze and C.\ Olmos,
   \textit{Normal holonomy groups and $s$-representations},
   Indiana Univ.\ Math.\ J.,
   {\bf 41},
   (1992),
   869--874.


\bibitem{MR1834454}
   S.\ Helgason,
  \textit{Differential geometry, {L}ie groups, and symmetric spaces},
  Graduate Studies in Mathematics, American Mathematical Society, Providence, Rhode Island, 2001.

\bibitem{MR1920389}
   A.\ Knapp,
   \textit{Lie groups beyond an introduction},
   Progress in Mathematics, Birkh\"auser Boston Inc.\ 2002.

\bibitem{MR1982000}
   K.\ Kondo,
   \textit{Local orbit types of {$S$}-representations of symmetric {${\bf R}$}-spaces},
   Tokyo J.\ Math.\ {\bf 26} (2003), 67--81.

\bibitem{MR0239005}
   O.\ Loos,
   \textit{Symmetric spaces.\ {I}: {G}eneral theory},
   W.\ A.\ Benjamin, Inc., New York-Amsterdam, 1969.

\bibitem{MR0239006}
   O.\ Loos,
   \textit{Symmetric spaces.\ II: Compact spaces and classification},
   W.\ A.\ Benjamin, Inc., New York-Amsterdam, 1969.

\bibitem{MR1023346}
   C.\ Olmos,
   \textit{The normal holonomy group},
   Proc.\ Amer.\ Math.\ Soc.,
   {\bf 110},
   (1990),
   813--818.


\bibitem{MR567427}
   T.\ Oshima and T.\ Matsuki,
   \textit{Orbits on affine symmetric spaces under the action of the isotropy subgroups},
   J.\ Math.\ Soc.\ Japan, {\bf 32} (1980), 399--414.

\bibitem{OS2}
 T.\ Oshima and J.\ Sekiguchi,
 \textit{Eigenspaces of Invariant Differential Operators on an Affine Symmetric Space},
Inventiones math. {\bf 57} (1980), 1--81.

\bibitem{MR810638}
   T.\ Oshima and J.\ Sekiguchi,
   \textit{The restricted root system of a semisimple symmetric pair},
   Adv.\ Stud.\ Math.\ {\bf 4} (1984), 433--497.

\bibitem{MR518716}
   W.\ Rossmann,
   \textit{The structure of semisimple symmetric spaces},
   Canad.\ J.\ Math.\ {\bf 31} (1979), 157--180.

\bibitem{MR1702475}
   H.\ Tamaru,
   \textit{The local orbit types of symmetric spaces under the actions of the isotropy subgroups},
   Differential Geom.\ Appl.\ {\bf 11} (1999), 29--38.

\bibitem{MR0498999}
   G.\ Warner,
   \textit{Harmonic analysis on semi-simple {L}ie groups. {I}},
   Springer-Verlag, New York, 1972.

\end{thebibliography}
\end{document}